\documentclass[11pt]{article}
\usepackage{amsthm,amsmath,amssymb}
\usepackage{graphicx}
\usepackage{float}
\usepackage[font=small]{caption}
\numberwithin{figure}{section}

\newtheorem{theorem}{Theorem}
\numberwithin{theorem}{section}
\newtheorem{lemma}[theorem]{Lemma}

\newtheorem{corollary}[theorem]{Corollary}

\addtolength{\oddsidemargin}{-.875in}
	\addtolength{\evensidemargin}{-.875in}
	\addtolength{\textwidth}{1.75in}

	\addtolength{\topmargin}{-.875in}
	\addtolength{\textheight}{1.75in}

\linespread{1.2}

\begin{document}

\title{Excluding a large theta graph}
\author{Guoli Ding\footnote{Research supported in part by NSF grant DMS-1500699} \hspace{.00001 in} and Emily Marshall \\ Mathematics Department, Louisiana State University, Baton Rouge, LA 70803}

\maketitle

\begin{abstract} A \textit{theta graph}, denoted $\theta_{a,b,c}$, is a graph of order $a+b+c-1$ consisting of a pair of vertices and three independent paths between them of lengths $a$, $b$, and $c$. We provide a complete characterization of graphs that do not contain a large $\theta_{a,b,c}$ as a topological minor. More specifically, we describe the structure of $\theta_{1,2,t}$-, $\theta_{2,2,t}$-, $\theta_{1,t,t}$-, $\theta_{2,t,t}$-, and $\theta_{t,t,t}$-free graphs where $t$ is large. The main result is a characterization of $\theta_{t,t,t}$-free graphs for large $t$. The $3$-connected $\theta_{t,t,t}$-free graphs are formed by $3$-summing graphs without a long path to certain planar graphs. The $2$-connected $\theta_{t,t,t}$-free graphs are then built up in a similar fashion by 2- and 3-sums. This result implies a well-known theorem of Robertson and Chakravarti on graphs that do not have a bond containing three specified edges.    
\end{abstract}

\section{Introduction}

All graphs are loopless but may have parallel edges. Undefined terminology can be found in \cite{diestel}. 

In this paper, we describe the structure of graphs that do not contain certain large theta graphs as a minor. A \textit{theta graph}, denoted $\theta_{a,b,c}$, is a graph of order $a+b+c-1$ consisting of a pair of vertices and three independent paths between them of lengths $a$, $b$, and $c$. Theta graphs have maximum degree 3 so containing a theta graph as a minor is equivalent to containing a theta graph as a topological minor. Throughout we will say \textit{$G$ contains $\theta_{a,b,c}$} to mean $G$ contains $\theta_{a,b,c}$ as a minor (or topological minor). Additionally we use the phrase \textit{$G$ contains a $\theta_{a,b,c}$ graph at $u$ and $v$} to mean $G$ contains as a subgraph a subdivision of $\theta_{a,b,c}$ in which $u$ and $v$ are the two vertices of degree 3. A graph is \textit{$\theta_{a,b,c}$-free} if it does not contain $\theta_{a,b,c}$.

The main goal of this paper is to describe all $\theta_{t,t,t}$-free graphs for large integers $t$. In other words, we want to characterize all graphs that do not contain three long independent paths between any pair of vertices. This problem is in fact an instance of a very general problem (P): for a given class $\cal H$ of graphs, determine all minor-closed classes $\cal G$ of graphs for which $\mathcal G\not\supseteq \cal H$. Our problem is exactly (P) when $\cal H$ is the class of all theta graphs. There are several choices of $\cal H$ for which (P) has been solved. Along this line, the best known results are the two obtained by Robertson and Seymour which solve (P) for the class of all complete graphs \cite{RoSe03} and for the class of all planar grids \cite{RoSe86}. The same authors also solved (P) for the classes of all trees, all stars, and all paths \cite{RoSe83, RoSe85}. Other classes for which (P) is solved include the class of all wheels \cite{DiDzWu} and the class of all double-paths \cite{ding}.

We prove that $\theta_{t,t,t}$-free graphs have the following structure: begin with a planar graph that contains no long paths outside of a special facial cycle and attach graphs that do not have long paths to the planar graph along edges, facial triangles, and certain facial $4$-cycles. This result is stated formally in the next section. Additionally, we describe all $\theta_{1,2,t}$-, $\theta_{2,2,t}$-, $\theta_{1,t,t}$-, and $\theta_{2,t,t}$-free graphs. 

Our result for $\theta_{t,t,t}$-free graphs implies a result of Robertson and Chakravarti \cite{RoCh80} concerning when three specified edges of a graph are contained in a \textit{bond} (a minimal nonempty edge-cut of the graph). Suppose we subdivide the three specified edges sufficiently many times. Then it is easy to see that the three specified edges are contained in a bond in the original graph if and only if the subdivided graph contains $\theta_{t,t,t}$. This connection easily leads to the result of Robertson and Chakravarti, as we will see in the next section, and it also illustrates how much our result strengthens their result.

Another important goal of this paper is to develop tools for dealing with various cases of problem (P). We will prove several key lemmas that could be used in similar situations. In particular, we will obtain a strengthened version of a result of Robertson and Seymour \cite{RoSe90} on the embeddability of a graph on a disc. 

The remainder of the paper is organized as follows. In Section 2 we formalize and state our main theorem. In Section 3 we examine graphs with a long path and look at large graphs which are necessarily present in such graphs. Section 4 describes several ways we will decompose our graphs into smaller pieces which will be useful in proofs. Section 5 includes lemmas on weighted graphs. In Section 6 we state and prove the characterizations of $\theta_{1,2,t}$-, $\theta_{2,2,t}$-, $\theta_{1,t,t}$-, and $\theta_{2,t,t}$-free graphs. Section 7 extends and strengthens a result of Robertson and Seymour concerning planar drawings of graphs and crossing paths. In Section 8 we prove our main theorem for $3$-connected graphs. Finally in Section 9 we complete our proof of the main theorem by considering $2$-connected graphs.

\section{Statement of the main theorem}

Let $G$ be a graph. For any two adjacent vertices $x$ and $y$, the set of all edges between $x$ and $y$ is called a {\it parallel family} of $G$. 
A {\it simplification} of $G$, denoted $si(G)$, is a simple graph obtained from $G$ by deleting all but one edge from each parallel family. We call $G$ \textit{$3$-connected} if $si(G)$ is 3-connected. We call $G$ {\it $2$-connected} if either $si(G)$ is 2-connected or $si(G)=K_2$ with $||G||\ge2$. Because $\theta_{a,b,c}$ is $2$-connected, a graph is $\theta_{a,b,c}$-free if and only if each of its blocks is $\theta_{a,b,c}$-free. Therefore, we only need to determine $2$-connected $\theta_{a,b,c}$-free graphs. 

For any subgraph $H$ of $G$, a path $P$ of $G$ is an {\it $H$-path} if $E(P \cap H) = \emptyset$ and the distinct ends of $P$ are the only two vertices of $P$ that are in $H$. Let $C$ be a facial cycle of a plane graph $G$. If $C$ bounds the infinite face of $G$ then $C$ is called the {\it outer cycle}; if $C$ bounds a finite face then $C$ is an {\it inner cycle}. Note that $C$ is both inner and outer if $G=C$. For any cycle $C$, we always assume there is an implicit forward direction. This is for the purpose of simplifying our terminology. For any two vertices $u,v$ of $C$, denote by $C[u,v]$ the forward path of $C$ from $u$ to $v$.  

In our proof, it becomes convenient to consider weighted graphs. This notion also allows us to obtain a stronger result. A \textit{weight function} of a graph $G$ is a mapping $w$ from $E(G)$ to the set of positive integers. A graph with a weight function is called a \textit{weighted graph} and is denoted $(G,w)$. For any subgraph $G'$ of $G$, the weight of $G'$, denoted $w(G')$, is the sum of $w(e)$ over all edges $e$ of $G'$. We say $(G,w)$ {\it contains} $\theta_{a,b,c}$ if $G$ contains a theta graph as a subgraph for which the three independent paths have weights at least $a$, $b$, and $c$, respectively. Naturally, $(G,w)$ is $\theta_{a,b,c}$-{\it free} if it does not contain $\theta_{a,b,c}$. Our main result in fact is a characterization of $\theta_{t,t,t}$-free weighted graphs. To describe these weighted graphs, we first define two fundamental classes of weighted graphs. 

Let $r,s \geq 2$ be integers. Let $\mathcal L_s$ be the class of $2$-connected graphs that do not contain a path of length $s$. Let $\mathcal L_{r,s}$ be the class of weighted graphs $(G,w)$ with $G \in \mathcal L_s$ and $w(e) < r $ for all $e \in G$. It is clear that weighted graphs in $\mathcal L_{r,s}$ do not contain $\theta_{t,t,t}$ if $t \geq rs$. 

For any integer $r\ge2$, let $\mathcal P_r$ be the class of $2$-connected weighted plane graphs $(G,w)$ such that if $C$ is the outer cycle then $G$ contains no $C$-path of weight $\ge 2r$ and $G\backslash E(C)$ contains no edge of weight $\ge r$. It is not difficult to see that weighted graphs in $\mathcal P_r$ contain no $\theta_{t,t,t}$ for sufficiently large $t$. We do not justify this observation here since a more general statement will be proved later.   

General $\theta_{t,t,t}$-free weighted graphs will be constructed from $\mathcal L_{r,s}$ and $\mathcal P_r$ by $k$-sums which are defined as follows for $k=2,3,4$. Let $G_1$ and $G_2$ be two disjoint graphs. A \textit{$2$-sum} of $G_1$ and $G_2$ is a new graph formed by identifying a specified edge of $G_1$ with a specified edge of $G_2$ and then deleting the edge after identification. Similarly, for $k=3,4$, a \textit{$k$-sum} of $G_1$ and $G_2$ is a new graph formed by identifying a specified $k$-cycle of $G_1$ with a specified $k$-cycle of $G_2$ and then deleting the edges of the $k$-cycle after identification. The specified edge or $k$-cycle of each $G_i$ will be called the {\it summing edge} or {\it summing $k$-cycle}, respectively. If $w_1,w_2$ are weight functions of $G_1,G_2$, then a {\it $k$-sum} ($k=2,3,4$) of $(G_1,w_1)$ and $(G_2,w_2)$ is a new weighted graph $(G,w)$ such that $G$ is a $k$-sum of $G_1,G_2$ and for each $e\in G$, $w(e)=w_i(e)$ where $i$ is such that $e \in G_i$.

Let $G$ be a plane graph and let $C$ be its outer cycle. An inner facial 4-cycle $R=x_1x_2x_3x_4x_1$ of $G$ is called a {\it rectangle} if the four vertices of $R$ are all on $C$ and the two edges $x_1x_2$ and $x_3x_4$ of $R$ are also edges of $C$. Note this implies there are no edges parallel to either $x_1x_2$ or $x_3x_4$. 

For any integers $r,s\ge2$, let $\Phi(\mathcal L_{r,s},\mathcal P_r)$ denote the class of 2-connected weighted graphs obtained from weighted graphs $(G_0,w_0) \in \mathcal P_r$ by $k$-summing $(k=2,3,4)$ weighted graphs from $\mathcal L_{r,s}$ to edges, inner facial triangles, and rectangles of $G_0$. 
We call $G_0$ the {\it base graph} of $G$. 
Now we are ready to state our main theorem. 

\begin{theorem}
There exists a function $t(r,s)$ such that all weighted graphs in $\Phi(\mathcal L_{r,s},\mathcal P_r)$ are $\theta_{t,t,t}$-free. 
Conversely, there also exists a function $s(t)$ such that every $2$-connected $\theta_{t,t,t}$-free weighted graph belongs to $\Phi(\mathcal L_{t,s(t)},\mathcal P_t)$
\label{thm:big}
\end{theorem}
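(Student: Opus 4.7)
I would show $\Phi(\mathcal L_{r,s},\mathcal P_r)$ is $\theta_{t,t,t}$-free once $t=t(r,s)$ is taken large. First, any weighted path inside an $\mathcal L_{r,s}$ piece has total weight strictly less than $rs$, since it uses fewer than $s$ edges, each of weight less than $r$. Second, a base graph $(G_0,w_0)\in\mathcal P_r$ is itself $\theta_{t,t,t}$-free for some $t_0=t_0(r)$: for three internally disjoint $u$--$v$ paths in the plane graph $G_0$, at most two of them can ``follow the outer cycle $C$ around'' by planarity, so the third must be a concatenation of $C$-paths (each of weight $<2r$) and off-$C$ edges (each of weight $<r$); if that third path had enough segments to reach weight $t$, two of its segments together with a $C$-arc would already produce a $\theta_{t,t,t}$ in $G_0$. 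Third, a traversal of any attached summand across a $k$-sum site lies in an $\mathcal L_{r,s}$ piece and so adds weight at most $rs$, meaning a hypothetical $\theta_{t,t,t}$ in the $k$-summed graph $G$ projects to three internally disjoint paths of weight $\geq t-O(rs)$ between two vertices of $G_0$, contradicting the bound from $\mathcal P_r$ for $t\gg rs$.

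\textbf{Converse direction, $3$-connected case.} Given a $3$-connected $\theta_{t,t,t}$-free weighted graph $(G,w)$, I split on whether $G$ contains a long weighted path. If every path in $G$ has length $<s(t)$ (and every edge has weight $<t$), then $G\in\mathcal L_{t,s(t)}$ and lies trivially in $\Phi(\mathcal L_{t,s(t)},\mathcal P_t)$ by interpreting it as an $\mathcal L$-piece summed onto a trivial base. Otherwise a long path $P$ exists, and I would apply the strengthened Robertson--Seymour disc-embedding theorem from Section~7 to deduce that $G$ has a plane embedding whose outer cycle $C$ contains $P$. Then, using the Section~3 lemmas on subgraphs forced by a long path, I verify that any $C$-path of weight $\geq 2t$, or any edge of $G\setminus E(C)$ of weight $\geq t$, combined with the two arcs of $C$ on either side of $P$, would yield a $\theta_{t,t,t}$ in $G$; so neither occurs and $(G,w)\in\mathcal P_t$.

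\textbf{Converse direction, $2$-connected case, and main obstacle.} For a general $2$-connected $\theta_{t,t,t}$-free weighted graph I apply the $2$- and $3$-sum decomposition tools from Section~4 to express $(G,w)$ as an iterated $k$-sum ($k=2,3,4$) of $3$-connected summands, each of which is itself $\theta_{t,t,t}$-free and so belongs to $\mathcal L_{t,s(t)}\cup\mathcal P_t$ by the previous step. The heavy $\mathcal P_t$-summands are to be amalgamated into a single planar base $(G_0,w_0)\in\mathcal P_t$, and each $\mathcal L_{t,s(t)}$-summand is then attached at an edge, an inner facial triangle, or a rectangle of $G_0$. The main obstacle, and the reason the notion of \emph{rectangle} appears in the statement, is geometric control of the summing sites: one must rule out the case in which a $4$-sum is glued onto an inner facial $4$-cycle of $G_0$ whose edges are not two non-adjacent arcs of the outer cycle $C$, because then the two interior edges of the $4$-cycle would, together with paths from the attached $\mathcal L$-piece and arcs of $C$, build a $\theta_{t,t,t}$ in $G$. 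Forcing all $4$-sum sites to be rectangles, and the corresponding verification for $3$-sum sites (which must be \emph{inner} facial triangles) and $2$-sum sites, is where the bulk of Section~9 is spent, and is the technically most delicate portion of the proof.
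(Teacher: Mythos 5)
There is a genuine gap in your converse argument for the $3$-connected case, and it is the central difficulty of the whole theorem. You claim that a $3$-connected $\theta_{t,t,t}$-free graph with a long path admits a plane embedding whose outer cycle contains that path. This is false: take $W_n'$ (a large wheel with doubled spokes) and $3$-sum a nonplanar graph with no long path, such as $K_5$, onto one of its facial triangles. The result is $3$-connected, nonplanar, has a long path, and is $\theta_{t,t,t}$-free for suitable $t$ --- which is precisely why the theorem's structure allows $3$-summing $\mathcal L_{r,s}$-pieces onto inner facial triangles rather than asserting planarity. The paper's Section~8 exists to handle exactly this: Lemma~\ref{lem:ttt} first forces the decomposition $S_3(W_k';G_1,\dots,G_k)$ via the unavoidable-minor results of Section~3 ($W_t^+$ or $L_t^+$), and Lemma~\ref{thm:ttt} then iterates this inside each summand, using the circlet machinery of Section~7 (Theorems~\ref{lem:Scycle} and~\ref{lem:Scycleedge}) to decide which parts of each summand can be absorbed into the planar base and which must remain as $3$-summed $\mathcal L$-pieces. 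A separate circlet argument (in the proof of Theorem~\ref{thm:3need}) is needed to place all heavy edges on the outer cycle even when $G$ itself has no long path; your sketch treats that case as trivially an $\mathcal L$-piece, but an $\mathcal L$-piece with three or more heavy edges need not lie in $\Phi(\mathcal L_{t,s(t)},\mathcal P_t)$ without this step.

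Two smaller points. In the forward direction, your claim that the third path is a ``concatenation of $C$-paths'' does not by itself bound its weight (many short $C$-paths could sum to a large weight), and your attempt to close this is circular; the paper's argument instead uses planarity to show the \emph{middle} of the three paths, after rerouting traversals of summands, is contained in a \emph{single} $C$-path of the base graph and hence has weight $<2qr$. You also gloss over theta graphs whose branch vertices lie inside an attached summand and over the $4$-sum case, which the paper must treat separately (via Lemma~\ref{lem:lemmab} or a heavy-edge count) because the rectangle's two non-$C$ edges are deleted by the sum. Your instinct about where rectangles come from is also slightly off: in the paper they arise when a $3$-connected $\mathcal L$-piece has exactly two nonadjacent heavy edges, which forces a $4$-sum onto a rectangle in Case~(c) of Theorem~\ref{thm:2connw}, rather than from a need to ``rule out'' bad $4$-sum sites.
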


Since every graph $G$ can be viewed as a weighted graph $(G,\varepsilon)$ where $\varepsilon(e)=1$ for all $e\in G$, Theorem \ref{thm:big} also characterizes graphs that are $\theta_{t,t,t}$-free. We do not formally state this simplified characterization since its derivation is straightforward and the final formulation is almost identical to Theorem \ref{thm:big}. 

In the following we formally state the result of Robertson and Chakravarti \cite{RoCh80} and we prove it using Theorem \ref{thm:big}. 

\begin{corollary} Let $G$ be a $2$-connected graph with three distinct edges $e,f,g$. Then either $G$ has a bond containing $e,f,g$ or $G$ is obtained from a $2$-connected plane graph $G_0$ 
by $2$- and $3$-summing graphs to edges and inner facial triangles of $G_0$, where $e,f,g$ are contained in three graphs that are 2-summed to three distinct edges of the outer cycle of $G_0$.
\label{cor:RobCha}
\end{corollary}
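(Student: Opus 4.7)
The plan is to translate the bond question into a theta-minor question and then apply Theorem~\ref{thm:big}. Fix a large integer $N$ (to be chosen larger than $|V(G)|$ and large relative to the function $s(\cdot)$ from Theorem~\ref{thm:big}), and let $G^\ast$ be the graph obtained from $G$ by subdividing each of $e,f,g$ into a path of length $N+1$; denote the three subdivision paths by $P_e,P_f,P_g$. Theorem~\ref{thm:big} applied to $G^\ast$ (viewed as unweighted) presents a dichotomy: either $G^\ast$ contains $\theta_{t,t,t}$ for a chosen $t$ with $|V(G)|\le t\le N+1$, or $G^\ast\in\Phi(\mathcal L_{t,s(t)},\mathcal P_t)$. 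The first alternative will turn out to be equivalent to $\{e,f,g\}$ being contained in a bond of $G$, while the second will yield the claimed planar decomposition.

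For the first alternative, the easy direction is this: if $\{e,f,g\}$ lies in a bond $B$ of $G$ with connected sides $X,Y$, then contracting each of $X$ and $Y$ in $G^\ast$ to a vertex produces three parallel paths of length $N+1$ (the subdivisions), so $G^\ast$ contains $\theta_{N+1,N+1,N+1}$. Conversely, suppose $G^\ast$ contains $\theta_{t,t,t}$ and let $Q_1,Q_2,Q_3$ be the three internally disjoint paths of length $\ge t$ between degree-$3$ vertices $u,v\in V(G)$. Since each interior vertex of a subdivision path has degree $2$ in $G^\ast$, any $Q_i$ that enters $P_j$ must traverse the whole $P_j$ and exit at the other original endpoint; and each $P_j$ admits at most one such traversal. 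The length bound $|Q_i|\ge t\ge|V(G)|$ forces each $Q_i$ to traverse at least one $P_j$, so the correspondence is a bijection. The components of $u$ and $v$ in $G-\{e,f,g\}$ now cover all of $V(G)$---any other component would be isolated from $X\cup Y$ in $G$, violating connectedness---exhibiting $\{e,f,g\}$ itself as a bond of $G$, which is the first case of the corollary.

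For the second alternative, $G^\ast$ decomposes as $k$-sums ($k\in\{2,3,4\}$) of a plane base $G_0^\ast\in\mathcal P_t$ with pieces from $\mathcal L_{t,s(t)}$. Each $P_j$ has length $N+1>s(t)$, so no $P_j$ lies inside a single summed piece, and the degree-$2$ structure of its interior vertices combined with the bound $2t-1$ on $C$-path weight imposed by $\mathcal P_t$ forces $P_j$ to follow a contiguous arc of the outer cycle $C$ of $G_0^\ast$. Moreover these three arcs are pairwise edge-disjoint because $P_e,P_f,P_g$ are. Unsubdividing---replacing each $P_j$ by the single edge it subdivided---recovers a decomposition of $G$ from a plane base $G_0$ in which $e,f,g$ appear as three distinct edges of the outer cycle. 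Finally, any $4$-sum at a rectangle of $G_0$ is refined into two $3$-sums plus a $2$-sum by introducing the rectangle's diagonal, keeping $e,f,g$ on distinct outer edges and matching the exact statement of the corollary. The main obstacle I foresee is the step that pins each $P_j$ onto a contiguous arc of $C$: the degree-$2$ interior vertices of $P_j$ could a priori be scattered across summed pieces and interior regions of the base, and ruling this out requires careful case analysis based on the structural restrictions imposed by $\mathcal P_t$ and $\mathcal L_{t,s(t)}$ together with the bijectivity argument from the previous paragraph applied to truncated subpaths.
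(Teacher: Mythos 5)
Your route is genuinely different from the paper's: you subdivide $e,f,g$ and apply the unweighted form of Theorem~\ref{thm:big} to $G^\ast$, whereas the paper assigns weight $t=|G|$ to $e,f,g$ (and weight $1$ to every other edge) and applies the weighted theorem to $G$ itself. The weighted formulation is precisely the device that makes the crucial localization step immediate: every member of $\mathcal L_{t,s(t)}$ has all edge weights below $t$, and $\mathcal P_t$ forbids edges of weight $\ge t$ off the outer cycle, so the three heavy edges are forced onto the outer cycle of the base graph with no further argument. In your unweighted version the corresponding step --- pinning each subdivision path $P_j$ to a contiguous arc of the outer cycle $C$ of $G_0^\ast$ --- is the step you yourself flag, and it is a genuine gap rather than bookkeeping. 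The definition of $\mathcal P_t$ only bounds the length of a single $C$-path by $2t$; it does not prevent $P_j$ from accumulating its length $N+1$ by threading through many short summed pieces and many short excursions into the interior of the base, glued together at degree-$2$ summing vertices lying off $C$. Theorem~\ref{thm:big} guarantees only that \emph{some} decomposition exists, and nothing in its unweighted statement lets you re-choose the decomposition or the planar embedding so that the long degree-$2$ paths land on the outer face. Closing this gap essentially amounts to re-deriving the weighted strengthening, which is why the paper proves Theorem~\ref{thm:big} for weighted graphs in the first place.

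Two smaller points. First, in the bond direction you conclude that ``$\{e,f,g\}$ itself is a bond,'' which is false in general: take $\theta_{2,2,2}$ with branch vertices $u,v$, let $e,f,g$ be the three edges at $u$, and add the edge $uv$; the three internally disjoint paths exist, yet $G-\{e,f,g\}$ is connected. What is true, and all you need, is that $e,f,g$ lie in a common bond: put the $u$-sides of the three paths into $X$ and the $v$-sides into $Y$, attach each component of $G$ minus the theta subgraph to a side it meets, and take $\delta(X)$, which is a bond (both sides connected) containing $e,f,g$. Second, your elimination of $4$-sums by ``introducing the rectangle's diagonal'' does not obviously yield legal $3$-sums, since one cannot in general split the summed piece along that diagonal; the paper instead observes that a rectangle forces a $2$-separation of the base splitting $C$ into two paths and replaces the $4$-sum by a $2$-sum with a smaller base graph.
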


\begin{proof} Suppose $G$ does not have a bond containing $e,f,g$. Let $t=|G|$ and let $w$ be a weight function of $G$ with $w(e)=w(f)=w(g)= t$ and $w(x)=1$ for all other edges $x$ of $G$. Then $(G,w)$ is $\theta_{t,t,t}$-free. By Theorem \ref{thm:big}, $(G,w)$ is obtained by summing weighted graphs from $\mathcal L_{t,s(t)}$ to $(H_0,w_0)\in\mathcal P_t$. Let $C$ be the outer cycle of $H_0$. Since no member of $\mathcal L_{t,s(t)}$ has an edge of weight $\ge t$ and since no edge of $H_0\backslash E(C)$ has weight $\ge t$, it follows that $e,f,g$ are all contained in $C$. 
If no 4-sum is used in the construction of $G$ then $G_0=H_0$ satisfies the requirement. If 4-sum is used then $H_0$ admits a 2-separation that divides $C$ into two paths. In this case, by making the base graph smaller and by allowing the summing graphs to contain at most one of $e,f,g$, we can replace the 4-sum by a 2-sum in the construction of $G$. Therefore, 4-sum can be eliminated from the construction and thus the result follows immediately. 
\end{proof}


\section{Unavoidable large graphs}

Graphs without a sufficiently long path are necessarily $\theta_{t,t,t}$-free. Since graphs without a long path have already been characterized by Robertson and Seymour \cite{RoSe85}, we will restrict our focus to graphs that do have a long path. The presence of a long path in a graph often implies the presence of some other large structure as well. In this section, we prove several lemmas describing these large structures. 

We begin with two lemmas that describe the unavoidable large structures in connected graphs with many vertices and in trees with many leaves, respectively. These will be used in our later proofs. Denote by $\Delta(G)$ the maximum degree of a vertex in $G$.  

\begin{lemma} If $G$ is simple, connected, and of order exceeding $1+d+d(d-1)+...+d(d-1)^ {p-1}$, where $d,p\ge1$ are integers, then either $\Delta(G) > d$ or $G$ has an induced path of length $p + 1$ starting from any specified vertex.
\label{lem:4.2.2}
\end{lemma}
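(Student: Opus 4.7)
The proposal is to argue by a standard breadth-first search / Moore-bound style counting argument from the specified vertex. Fix the specified vertex $v$, and assume $\Delta(G)\le d$ (otherwise the first alternative already holds). For $i\ge 0$ let $N_i$ denote the set of vertices at distance exactly $i$ from $v$ in $G$. Then $|N_0|=1$ and $|N_1|\le d$ since $v$ has at most $d$ neighbors. For $i\ge 2$, each vertex $u\in N_i$ has some neighbor in $N_{i-1}$ (its BFS parent), so the other at most $d-1$ neighbors account for the membership of $u$'s neighborhood in $N_i$; counting edges between $N_{i-1}$ and $N_i$ gives $|N_i|\le (d-1)|N_{i-1}|$, and hence by induction $|N_i|\le d(d-1)^{i-1}$.

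Summing over $i=0,1,\ldots,p$ shows that the number of vertices of $G$ within distance $p$ from $v$ is at most $1+d+d(d-1)+\cdots+d(d-1)^{p-1}$. Since $|G|$ strictly exceeds this bound and $G$ is connected, there must exist a vertex $u$ with $\mathrm{dist}(v,u)\ge p+1$. Pick a shortest $v$--$u$ path $P=v_0v_1\cdots v_k$ (so $v_0=v$ and $k\ge p+1$), and let $P'=v_0v_1\cdots v_{p+1}$ be its initial segment of length $p+1$.

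It remains only to check that $P'$ is induced: any chord $v_iv_j$ with $i<j-1$ in $P'\subseteq P$ would yield a strictly shorter $v_0$--$v_k$ walk by splicing out $v_{i+1},\ldots,v_{j-1}$, contradicting the minimality of $P$. Hence $P'$ is the desired induced path of length $p+1$ starting at $v$.

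The only substantive step is the recursion $|N_i|\le d(d-1)^{i-1}$, which is where the factor $(d-1)$ (rather than $d$) enters and makes the bound tight; everything else is bookkeeping and the elementary observation that initial segments of shortest paths are induced. I do not expect any real obstacle.
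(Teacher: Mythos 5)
Your proof is correct and is essentially the same breadth-first/Moore-bound counting argument as the paper's: bound the number of vertices within distance $p$ of the specified vertex by $1+d+d(d-1)+\cdots+d(d-1)^{p-1}$, conclude a vertex at distance $\ge p+1$ exists, and take an initial segment of a shortest path (which is automatically induced). The only difference is that you spell out the induced-path verification that the paper leaves implicit.
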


\begin{proof} Suppose $\Delta(G) \leq d$. Let $v \in V (G)$ and let $n_k$ be the number of vertices of distance $k$ away from $v$. Then $n_0 = 1$, $n_1 = d_G (v)$, and $n_k \leq n_{k-1} (d - 1)$ for all $k \geq 2$. It follows that $|G| > n_0 + n_1 + \dots + n_p$ and thus $n_{p+1} \neq 0$. Therefore, $G$ has a vertex of distance $p + 1$ away from $v$, which proves the lemma. 
\end{proof}

\begin{lemma} If $T$ is a tree with at least $d^t$ leaves, where $d,t \geq 2$ are integers, then either $\Delta(T) > d$ or $T$ contains a subdivision of $\text{comb}_t$, which is shown on the left of Figure~\ref{fig:ladder}.
\label{lem:4.2.7}
\end{lemma}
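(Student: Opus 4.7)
The plan is to prove the lemma by induction on $t \ge 2$, assuming throughout that $\Delta(T) \le d$ (otherwise the first conclusion already holds). The guiding intuition is that bounded maximum degree combined with many leaves forces many branching vertices in $T$, and a careful tree-walk argument arranges them as the teeth of a comb.

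For the base case $t = 2$, the tree $T$ has at least $d^2 \ge 4$ leaves, and any four chosen leaves determine a Steiner subtree whose shape is either a spider centered at a single branching vertex or an $H$-shaped caterpillar with two degree-$3$ vertices joined by a path; in both cases this Steiner subtree contains a subdivision of $\text{comb}_2$.

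For the inductive step, I would root $T$ at a leaf $r$ and walk from $r$ down the rooted tree until reaching the first vertex $v$ of degree $\ge 3$ in $T$; such a vertex exists, since otherwise $T$ would be a path with only two leaves, contradicting $|\mathrm{leaves}(T)| \ge d^t \ge 4$. Write $P$ for the resulting $r$-$v$ path. Now $v$ has at most $d - 1$ children in the rooted tree, and their subtrees between them contain at least $d^t - 1$ leaves of $T$, so by pigeonhole some child $c$ yields a subtree $T'$ containing at least $(d^t - 1)/(d - 1) \ge d^{t-1}$ leaves of $T$, all of which remain leaves of $T'$. Applying the inductive hypothesis to $T'$ produces a subdivision of $\text{comb}_{t-1}$ inside $T'$. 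I would then graft this into a subdivision of $\text{comb}_t$ in $T$ by extending the spine from its endpoint inside $T'$ up through $c$ to $v$ via the edge $vc$, and attaching the path $P$ at $v$ as a new tooth terminating at the leaf $r$.

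The main obstacle is making the grafting step precise: one has to guarantee that the subdivided $\text{comb}_{t-1}$ inside $T'$ has a spine endpoint reachable from $c$ by a path in $T'$ that is internally disjoint from the rest of the comb. The natural fix is to strengthen the inductive statement so that the subdivision of $\text{comb}_t$ can always be found with a designated spine endpoint at a prescribed vertex of the tree; then at the inductive step we demand that prescribed vertex to be $c$ inside $T'$, and the extension through $v$ together with the new tooth $P$ closes the argument. The base case is adapted analogously, which is routine.
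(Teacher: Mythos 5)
Your induction-plus-pigeonhole strategy is genuinely different from the paper's proof (which suppresses all degree-$2$ vertices and then either finds a path of length $t$ from a branch vertex, each internal vertex of which supplies a tooth, or falls back on Lemma~\ref{lem:4.2.2} to conclude $\Delta(T)>d$), and your count $(d^t-1)/(d-1)\ge d^{t-1}$ is correct, but two steps fail as written. First, the base case: it is not true that \emph{any} four leaves work. If the four chosen leaves share a common branch vertex, their Steiner subtree is a spider with a single vertex of degree at least $3$, and since $\text{comb}_2$ has two degree-$3$ vertices, no subdivision of it can sit inside a spider (a topological embedding must send the two cubic vertices of $\text{comb}_2$ to two distinct vertices of degree at least $3$). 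What you actually need is that $\Delta(T)\le d$ together with having at least $d^2>d$ leaves forces $T$ to contain two distinct branch vertices; the path joining them plus two spare branches at each then yields $\text{comb}_2$.

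The more serious problem is the grafting step. Terminating the spine at $v$ and hanging $P$ off $v$ as a ``tooth'' leaves $v$ with degree $2$ in the constructed subgraph (one edge toward $c$, one into $P$), so $v$ is not a new cubic vertex and you have produced only a re-subdivided $\text{comb}_{t-1}$: note that $\text{comb}_t$ has $t$ cubic vertices, so its spine must continue \emph{past} each extreme tooth. Your proposed strengthening (a prescribed spine endpoint) repairs the attachment inside $T'$ but does not address this; you must additionally use a second child $c'$ of $v$ --- which exists because $\deg_T(v)\ge 3$ while only one neighbour of $v$ lies toward the root --- and route the spine one further edge into the subtree of $c'$ while keeping $P$ as the tooth (or exchange the roles of $P$ and $vc'$). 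With that correction, and with the strengthened statement actually proved rather than asserted (its base case does not ``adapt routinely,'' for the reason above, and one should check the pigeonhole when the prescribed vertex is itself the first branch vertex and so has up to $d$ rather than $d-1$ children), your induction does close; but as written the argument does not yet establish the lemma.
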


\begin{proof} Since contracting an edge incident with a degree 2 vertex does not change the problem, assume $T$ has no vertex of degree 2. Since $d^t \geq 4$, $T$ has a vertex $v$ of degree greater than 2. If $T$ has a path of length $t$ starting from $v$ (which is necessarily induced), then a $\text{comb}_t$ subgraph can be obtained by extending this path. Assume no such path exists. Since $T$ has at least $d^t$ leaves, it follows that $|T| > d^t > 1 + d + d^2 + \dots + d^{t-1} > 1+d+d(d-1)+ \dots +d(d-1)^{t-2}$. Thus we deduce from Lemma~\ref{lem:4.2.2} that $\Delta(T) > d$. \end{proof}

\begin{figure}[ht]
\centerline{\includegraphics[scale=0.4]{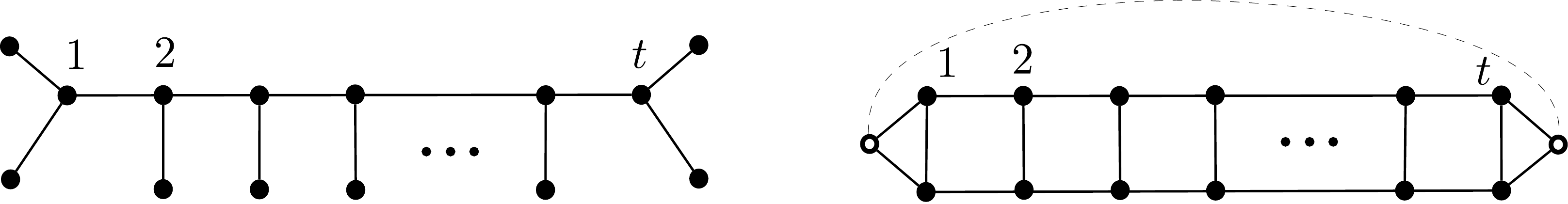}}
\caption{comb$_t$ and $L_t^+$} 
\label{fig:ladder}
\end{figure} 

Denote by $W_n$ the wheel on $n+1$ vertices and denote by $\ell(G)$ the length of a longest path in a graph $G$. The next result says that a 3-connected graph with a sufficiently long path must have a big wheel minor.

\begin{lemma} [\cite{DiDzWu}, Prop. 3.8] There exists a function $f_{\ref{lem:4.2.9}}(t)$ such that every $3$-connected graph $G$ with $\ell(G) \geq f_{\ref{lem:4.2.9}}(t)$ contains a $W_t$ minor. 
\label{lem:4.2.9}
\end{lemma}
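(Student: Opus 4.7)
The idea is to exploit 3-connectivity to extract many ears attached to a long path and then use Ramsey-type combinatorics on the ear pattern to locate a $W_t$ minor.

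First, I would fix a longest path $P = v_0 v_1 \ldots v_n$ in $G$, where $n = \ell(G)$ is taken sufficiently large in $t$. Because $P$ is longest, neither $v_0$ nor $v_n$ has a neighbor off $P$, and every bridge of $P$ (a component of $G \setminus V(P)$ together with its attachments) attaches only at interior vertices of $P$. Since $G$ is 3-connected, the ears of $P$, namely chords of $P$ and paths through bridges, are abundant: for every short subinterval of $P$ there must exist ears that jump across it, for otherwise one of its endpoints would be a 2-cut of $G$.

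Next, I would encode each ear by the interval $[i,j]$ on $\{0,1,\dots,n\}$ spanned by its two endpoints on $P$ and apply a Ramsey-type lemma for interval families: given at least $N$ intervals, one can extract a subfamily of size growing like $\Omega(N^{1/c})$ whose members are pairwise crossing, pairwise nested, or pairwise disjoint. Choosing $n$ large enough, at least one of these three subfamilies contains at least $t$ ears. In the pairwise crossing case, the ears together with the relevant segments of $P$ form a long ``staircase'' whose standard contraction yields a $W_t$ minor. In the pairwise nested case, the ears form a fan configuration; one more use of 3-connectivity supplies a path from the innermost region back to the outermost region, which after contraction plays the role of the hub of the wheel. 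In the pairwise disjoint case there is no wheel directly, but 3-connectivity provides additional paths linking the disjoint ears, and a second application of Ramsey to the enlarged configuration reduces this case to one of the previous two.

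The main obstacle will be formalising the nested and disjoint cases: in each one must carry out a second, linked application of 3-connectivity and carefully track how the resulting paths interact with the already-chosen ears, making sure the resulting hub vertex (or connected hub subgraph) is adjacent, in the minor, to every vertex of a long rim. The iterated Ramsey steps will force $f_{\ref{lem:4.2.9}}(t)$ to grow at least exponentially in $t$, which is harmless for the downstream arguments in the paper.
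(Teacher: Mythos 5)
First, a point of comparison: the paper does not prove this lemma at all --- it is imported verbatim from \cite{DiDzWu} (Prop.~3.8) --- so there is no internal proof to measure your sketch against, and I can only judge whether your plan would itself constitute a proof. It would not, because two of your three Ramsey outcomes are handled incorrectly. A family of pairwise nested ears on $P$, together with $P$, is an outerplanar configuration, and a family of pairwise disjoint ears on $P$ is series--parallel; in both cases the subgraph you have built has no $K_4=W_3$ minor, let alone a $W_t$ minor, so the ears extracted in these cases carry no information and the entire content of the theorem sits in the steps you defer. In particular, a single path from the innermost to the outermost region cannot ``play the role of the hub'': the hub of $W_t$ needs $t$ pairwise disjoint attachments to a rim cycle, and one transversal path supplies at most two new attachment points to the nested system. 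The disjoint case is the genuinely dangerous one: the square of a long path is $2$-connected, has a Hamilton path and an ear over every internal vertex, yet is $K_4$-minor-free; so ``many ears'' plus a vague appeal to additional linking paths cannot suffice, and the recursion you gesture at (``a second application of Ramsey to the enlarged configuration'') is neither shown to terminate nor shown to land in the crossing case.

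Even the crossing case needs two repairs you do not mention. (i) Pairwise crossing ears with endpoints $i_1<\dots<i_t<j_1<\dots<j_t$ together with $P$ form a ladder, which again has no wheel minor; what saves you is the middle segment of $P$ from $p_{i_t}$ to $p_{j_1}$, which closes the rim and turns the configuration into (essentially) a M\"obius ladder, from which $W_t$ is obtained by contracting one side into a hub. This is precisely the distinction the paper is careful about in Lemmas~\ref{lem:4.2.11} and~\ref{lem:4.2.12}, where crossing configurations are recorded as the ladder $L_t^+$ --- a genuinely different outcome from $W_t$ --- rather than converted into a wheel. (ii) Ears arising from the same bridge of $P$ need not be pairwise internally disjoint, and disjointness is essential for the minor model; some analogue of Lemma~\ref{lem:stablebridge}, or of the disjoint-matching set-up of Lemma~\ref{lem:4.2.11}, must be established before Ramsey is applied to the intervals. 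As it stands the proposal front-loads the routine part of the argument and leaves the actual theorem unproved.
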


Let $L_t$ be the graph shown on the right of Figure~\ref{fig:ladder} without the dashed edge and the white vertices.

\begin{lemma} Let $G$ consist of two disjoint paths $X = x_1x_2 \dots x_m$ and $Y = y_1y_2 \dots y_m$ and a matching $M = \{e_i = x_i y_{\pi(i)} : i = 1,2,\dots,m\}$. If $m > n^2$ then $G$ contains an $L_{n+1}$ (topological) minor. 
\label{lem:4.2.11}
\end{lemma}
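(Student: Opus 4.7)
The plan is to invoke the Erdős--Szekeres theorem on the permutation $\pi$ and then read off a ladder directly. Since $m > n^2$, the sequence $\pi(1), \pi(2), \dots, \pi(m)$ has length at least $n^2+1$, so by Erdős--Szekeres it contains a monotone subsequence of length $n+1$. Thus we can choose indices $i_1 < i_2 < \cdots < i_{n+1}$ such that the sequence $\pi(i_1), \pi(i_2), \dots, \pi(i_{n+1})$ is either strictly increasing or strictly decreasing.

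In the increasing case, let $j_k = \pi(i_k)$ so that $j_1 < j_2 < \cdots < j_{n+1}$. I would take as the two rails of the ladder the subpath $X[x_{i_1}, x_{i_{n+1}}]$ of $X$ and the subpath $Y[y_{j_1}, y_{j_{n+1}}]$ of $Y$, and as the rungs the matching edges $e_{i_1}, e_{i_2}, \dots, e_{i_{n+1}}$. Because the $i_k$ appear in order along $X$ and the $j_k$ appear in order along $Y$, the two rails meet the rungs in the compatible cyclic order required by $L_{n+1}$, and the portions of $X$ (resp.\ $Y$) strictly between consecutive $x_{i_k}$'s (resp.\ $y_{j_k}$'s) provide internally disjoint subdivision paths between the $n+1$ rung endpoints. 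Hence this subgraph is a subdivision of $L_{n+1}$.

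The decreasing case is handled identically after reversing the orientation of $Y$: traversing $Y$ from $y_{j_{n+1}}$ to $y_{j_1}$ we encounter $y_{j_{n+1}}, y_{j_n}, \dots, y_{j_1}$ in order, so again the endpoints of the rungs $e_{i_1}, \dots, e_{i_{n+1}}$ occur along the subpath $Y[y_{j_{n+1}}, y_{j_1}]$ in the order matching the order of $x_{i_1}, \dots, x_{i_{n+1}}$ on $X$. The same argument as above produces a subdivision of $L_{n+1}$.

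There is no real obstacle in this argument: the only point needing care is to verify that the selected subpaths of $X$ and $Y$ together with the selected matching edges form a subdivision of $L_{n+1}$ (i.e.\ that the rungs attach to the rails in the correct cyclic order and that the rail segments between rung endpoints are internally disjoint from everything else), and both follow immediately from the monotonicity of the chosen subsequence and the fact that $X$ and $Y$ are disjoint paths with $M$ a matching between them.
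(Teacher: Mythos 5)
Your proposal is correct and is essentially the same argument as the paper's: both extract a monotone subsequence of length $n+1$ from $\pi(1),\dots,\pi(m)$ and observe that the corresponding matching edges together with $X$ and $Y$ form a subdivision of $L_{n+1}$. The only difference is that you cite Erd\H{o}s--Szekeres as a black box, whereas the paper proves it inline by iteratively peeling off antichains $F_1,F_2,\dots$ of the order $\prec$ on $M$ (a long antichain gives the decreasing case, and if all antichains are short a long chain gives the increasing case).
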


\begin{proof} Let $e_i \prec e_j$ if $i < j$ and $\pi(i) < \pi(j)$. Let $F_1$ be the set of maximal members of $M$ with respect to $\prec$. Inductively, if $F_i$ has been defined and $M_i = M \backslash F_1 \backslash \dots \backslash F_i \neq \emptyset$, then let $F_{i+1}$ be the set of maximal members of $M_i$ with respect to $\prec$. Note members of each $F_i$ can be expressed as $e_{i_1} ,e_{i_2} ,\dots,e_{i_k}$ such that $i_1 < i_2 < \dots < i_k$ and $\pi(i_1) > \pi(i_2) > \dots > \pi(i_k)$. If $|F_i| > n$ for some $i$, then the conclusion holds since the union of paths $X,Y$ and matching $F_i$ contains $L_{n+1}$. Suppose $|F_i| \leq n$ for all $i$. Then $F_{n+1}\ne\emptyset$ since $m > n^2$. For each $i=2,...,n+1$ and each $e \in F_i$, note there exists $f \in F_{i-1}$ with $e \prec f$. Thus there exists $e_{i_j} \in F_j$ for $j = 1,2,\dots,n + 1$ such that $e_{i_{n+1}} \prec e_{i_n} \prec \dots \prec e_{i_1}$. Now the union of $X,Y$ and $e_{i_1},e_{i_2},\dots,e_{i_{n+1}}$ contains $L_{n+1}$. \end{proof}

Let $L_t^+$ be the graph shown on the right in Figure~\ref{fig:ladder} with the dashed edge. The next result strengthens Lemma \ref{lem:4.2.9}.

\begin{lemma} There exists a function $f_{\ref{lem:4.2.12}}(t)$ such that every $3$-connected graph $G$ with $\ell(G) \geq f_{\ref{lem:4.2.12}}(t)$ contains $W_t$ or $L_t^+$ as a topological minor. 
\label{lem:4.2.12}
\end{lemma}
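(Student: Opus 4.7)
I would start from Lemma~\ref{lem:4.2.9}: choose $N = N(t)$ sufficiently large (its exact value will be fixed during the argument) and set $f_{\ref{lem:4.2.12}}(t) := f_{\ref{lem:4.2.9}}(N)$, so that $G$ contains a $W_N$ minor. Fix such a minor with hub branch set $H$ and rim branch sets $R_1, \ldots, R_N$ in cyclic order, joined by spoke edges $e_i$ of $G$, where $e_i$ has one end $u_i \in H$ and one end $r_i \in R_i$. Because each $R_i$ has only three external contact vertices, one may replace each $R_i$ by a path of $G$ so that the $R_i$'s together with their connecting rim edges form a subdivision $C \subseteq G$ of the $N$-cycle. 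If $|V(H)| = 1$, the union of $C$ and the $N$ spokes is already a subdivision of $W_N$, and $N \geq t$ gives the conclusion.

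Otherwise, take a Steiner tree $S \subseteq H$ that spans all the $u_i$'s, and let $T := S \cup \{e_1, \ldots, e_N\}$, a tree whose leaves are exactly $\{r_1, \ldots, r_N\} \subseteq V(C)$. Apply Lemma~\ref{lem:4.2.7} to $T$ with $d := t$ and exponent $k := (t-1)^2 + 1$, choosing $N \geq t^k$. If $\Delta(T) > t$, pick $v \in V(T)$ with $\deg_T(v) > t$; each of the $> t$ components of $T - v$ contains a leaf of $T$, i.e.\ some $r_i \in V(C)$, so there are $t+1$ internally disjoint paths in $T$ from $v$ to $t+1$ distinct vertices of $C$. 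Combined with $C$, these paths form a subdivision of $W_t$ in $G$ with hub $v$.

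Otherwise $T$ contains a subdivision of $\text{comb}_k$. Let $P$ be its spine with teeth attached at vertices $p_1, \ldots, p_k \in V(P)$; each tooth is a path in $T$ from some $p_j$ to a leaf $s_j \in \{r_1, \ldots, r_N\} \subseteq V(C)$. Delete one edge of $C$ incident to none of the $s_j$ to obtain a path $X$ visiting $s_1, \ldots, s_k$ in some order, set $Y := P$ with distinguished vertices $p_1, \ldots, p_k$, and after suppressing degree-$2$ vertices regard the teeth as a matching between $Y$ and $X$. Since $k > (t-1)^2$, Lemma~\ref{lem:4.2.11} delivers a topological $L_t$ minor. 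The deleted edge of $C$, together with the remaining arc of $C$ joining the two endpoints of $X$, then supplies the extra edge and the two white vertices needed to upgrade the $L_t$ to a topological $L_t^+$. The main obstacle is this last case: the application of Lemma~\ref{lem:4.2.11} requires that the spine $P$, the teeth, and the two portions of $C$ be pairwise internally disjoint, and arranging this (together with verifying that the ``closing'' arc of $C$ lands at the correct end of the ladder to realize $L_t^+$) is the delicate part of the proof.
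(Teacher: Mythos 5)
Your overall strategy is exactly the paper's: extract a large $W_N$ minor via Lemma~\ref{lem:4.2.9}, view it as a long cycle $C$ plus a connected hub piece joined to $C$ by a system of pendant edges, take a minimal tree $T$ through those edges so that its leaves lie on $C$, and then split via Lemma~\ref{lem:4.2.7} into the high-degree case (giving $W_t$) and the comb case (giving a ladder via Lemma~\ref{lem:4.2.11}). The high-degree case is fine, and the reduction of the teeth to a matching between the spine and $C\backslash e$ is handled at the same level of rigor as in the paper.

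The genuine gap is in the last step, and it is precisely the step you yourself flag as ``delicate.'' You choose $k=(t-1)^2+1$, so Lemma~\ref{lem:4.2.11} only hands you a topological $L_t$. The closing arc of $C$ (the deleted edge $e$ together with the two overhanging ends of $C\backslash e$) attaches to the $C$-side \emph{corner branch vertices} of the first and last rungs of that ladder, whereas the dashed edge of $L_t^+$ must join the two white vertices, which are subdivision points of edges of $L_t$, not existing branch vertices. With a bare $L_t$ there is no room to reroute: adding a path between two corners of $L_t$ does not yield a topological $L_t^+$. The paper avoids this by taking $r=1+(t+1)^2$ so that Lemma~\ref{lem:4.2.11} produces $L_{t+2}$; the two sacrificial end rungs (together with the overhanging pieces of $C\backslash e$ and the edge $e$) are then spent on routing the dashed path so that it attaches at interior points of the appropriate edges, leaving a clean $L_t^+$ on the middle $t$ rungs. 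So your argument is repairable by a purely quantitative change --- take $k>(t+1)^2$ and extract $L_{t+2}$ --- but as written the final upgrade from $L_t$ to $L_t^+$ does not go through.
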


\begin{proof} We will show $f_{\ref{lem:4.2.12}}(t)=f_{\ref{lem:4.2.9}}(s)$, where $s=(t-1)^r$ and $r={1+(t+1)^2}$, satisfies the theorem. Let $G$ be $3$-connected with $\ell(G)\ge f_{\ref{lem:4.2.12}}(t)$. By Lemma~\ref{lem:4.2.9}, $G$ has a $W_s$ minor. This minor can be considered as a cycle $C$ of length at least $s$ in $G$, a connected subgraph $G_0$ of $G$ with $V(G_0 \cap C) = \emptyset$, and a set $S$ of $s$ edges each incident with a vertex of $G_0$ and a distinct vertex of $C$. Let $G_1$ be the graph $G_0$ together with the edges in $S$. Let $T$ be a smallest tree of $G_1$ containing all edges of $S$. Then leaves of $T$ are precisely the $s$ vertices on $C$ that are incident with an edge of $S$. Now by Lemma~\ref{lem:4.2.7}, either $\Delta(T)> t-1$ or $T$ contains a subdivision of $\text{comb}_r$. First suppose the former and let $v$ be a vertex of degree at least $t$ in $T$. Then $T$ has $t$ independent paths from $v$ to leaves of $T$. Clearly, these paths together with $C$ form a  subdivision of $W_t$. 

Next suppose $T$ contains a subdivision $T'$ of $\text{comb}_r$. Let $X$ be the minimal path of $T'$ that contains all the $r$ cubic vertices of $T'$. Then $T$ contains a set $\cal P$ of $r$ disjoint paths from $X$ to $C$. Let $e$ be an edge of $C$ and let $Y=C\backslash e$. By viewing paths in $\cal P$ as a matching between $X$ and $Y$, we deduce from Lemma~\ref{lem:4.2.11} that the union of $X$, $Y$, and paths in $\cal P$ contains an $L_{t+2}$ topological minor. Now this topological minor together with $C$ contains an $L_t^+$ topological minor.         \end{proof}

\section{Decompositions}

It will be helpful in later proofs to decompose graphs into smaller pieces for the purpose of better understanding their structure. In this section, we describe several ways to do this. 

A {\it separation} of a graph $G$ is a pair $(G_1,G_2)$ of edge-disjoint non-spanning subgraphs of $G$ with $G_1\cup G_2=G$. A set $Z \subseteq V(G)$ is a \textit{cut} of $G$ if $G-Z$ is disconnected. It is clear that if $(G_1,G_2)$ is a separation then $V(G_1\cap G_2)$ is a cut. Conversely, if $Z$ is a cut then $G$ has a separation $(G_1,G_2)$ with $V(G_1 \cap G_2) = Z$. 
For any integer $k$, a \textit{$k$-separation} is a separation $(G_1,G_2)$ with $|V(G_1 \cap G_2)|=k$ and a \textit{$k$-cut} is a cut $Z$ with $|Z|=k$. The following lemma relates $k$-sum with $k$-separation. We omit the proof since it is easy.

\begin{lemma}\label{lem:sumsep}
$(a)$ Let $G$ be $2$-connected and let $(G_1,G_2)$ be a $2$-separation of $G$ with $V(G_1\cap G_2)=\{x,y\}$. For $i=1,2$, let $G_i^+$ be obtained from $G_i$ by adding a new edge $xy$. Then each $G_i^+$ is a $2$-connected minor of $G$ and $G$ is a $2$-sum of $G_1^+$ and $G_2^+$. \\ 
\indent $(b)$ Let $G$ be $3$-connected and let $(G_1,G_2)$ be a $3$-separation of $G$ with $V(G_1\cap G_2)=\{x,y,z\}$. For $i=1,2$, let $G_i^+$ be obtained from $G_i$ by adding three new edges $xy,yz,xz$. Then each $G_i^+$ is $3$-connected and $G$ is a $3$-sum of $G_1^+$ and $G_2^+$. Moreover, $G_i^+$ is a minor of $G$ unless $si(G_{3-i})=K_{1,3}$. \\ 
\indent $(c)$ Let $G$ be $k$-connected $(k=2,3)$ and be a $k$-sum of $G_1,G_2$, where $|G_1|,|G_2|>k$. For $i=1,2$, let $G_i'$ be obtained from $G_i$ by deleting its summing edge (when $k=2$) or its edges of the summing triangle (when $k=3$). Then $(G_1',G_2')$ is a $k$-separation of $G$.
\end{lemma}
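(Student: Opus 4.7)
The plan is to handle each part by a uniform \emph{lifting} strategy: minor claims follow from contracting suitable paths on the other side of the separation, and connectivity of the augmented graphs follows by lifting any hypothetical small separation back to $G$ and contradicting the connectivity hypothesis.

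For (a), since $G$ is $2$-connected, the subgraph $G_{3-i}$ contains an $xy$-path $P$, and contracting $P$ to a single edge realizes $G_i^+$ as a minor of $G$. If $G_i^+$ had a cut vertex $v$, then replacing the new edge $xy$ by an $xy$-path through $G_{3-i}$ would produce a cut vertex of $G$, contradicting $2$-connectivity. The $2$-sum identity is immediate from the construction: identifying the new edges $xy$ in $G_1^+, G_2^+$ and then deleting the identified edge recovers $G_1 \cup G_2 = G$.

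For (b), the analogous plan is to find a subgraph $H$ of $G_{3-i}$ that contracts onto a triangle with branch vertices $x, y, z$; such an $H$ realizes $G_i^+$ as a minor of $G$. The natural candidate for $H$ is a subdivision of $K_3$ on branch vertices $x,y,z$, i.e., three internally disjoint paths through $G_{3-i}$ pairwise joining $\{x,y,z\}$, whose existence is ensured by $3$-connectivity of $G$ whenever $G_{3-i}$ is rich enough. A short case analysis shows the unique obstruction is $si(G_{3-i}) = K_{1,3}$ with $x,y,z$ as the three leaves, since then every contraction either identifies two of $\{x,y,z\}$ or leaves some pair without an edge between them. Three-connectivity of $G_i^+$ follows by lifting: any $2$-separation of $G_i^+$ must place the triangle $xyz$ entirely on one side (a triangle has no proper $2$-separation splitting its vertices across the cut), and such a $2$-separation extends, via gluing $G_{3-i}$ to that side, to a $2$-separation of $G$, contradicting $3$-connectivity.

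For (c), the verification is direct: after deleting the summing edges (resp.\ summing triangle edges), the intersection $V(G_1' \cap G_2')$ is exactly the $k$ identified vertices, forming a $k$-cut, and the hypothesis $|G_i|>k$ ensures each $G_i'$ is non-spanning. The main obstacle throughout is the exception case in (b): one must carefully verify that $si(G_{3-i}) = K_{1,3}$ is the sole structural obstruction to realizing a triangle-minor on $\{x,y,z\}$, which requires combining $3$-connectivity of $G$ with a small case analysis on the possible structures of $G_{3-i}$. All other steps are routine consequences of connectivity and the $k$-sum construction, which is why the authors omit them.
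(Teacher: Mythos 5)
The paper offers no proof of this lemma (it is explicitly omitted as easy), so there is nothing to compare against; judging your argument on its own terms, parts (a) and (c) are correct, and so are the $k$-sum identities and the connectivity claims in (b): lifting a hypothetical small cut of $G_i^+$ back to $G$ works exactly as you say, because the added edge (or triangle) keeps what remains of $\{x,y\}$ (or $\{x,y,z\}$) in one component, so some other component of $G_i^+$ minus the cut survives as a component of $G$ minus the cut.

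The gap is in the minor claim of (b). Your ``natural candidate'' --- a subdivision of $K_3$ with branch vertices $x,y,z$ inside $G_{3-i}$, i.e.\ a cycle of $G_{3-i}$ through all three of $x,y,z$ --- need not exist even when $si(G_{3-i})\ne K_{1,3}$ and the conclusion holds. Take $G=K_5$ minus the edge $cd$, with $G_1$ on $\{x,y,z,d\}$ carrying $dx,dy,dz,xz,yz$ and $G_2$ on $\{x,y,z,c\}$ carrying $cx,cy,cz,xy$. Then $si(G_2)\ne K_{1,3}$, and $G_2$ has no cycle through $x,y,z$ (its only cycle is $c\,x\,y\,c$, since $z$ has degree $1$ in $G_2$), yet $G_1^+$ \emph{is} a minor of $G$: contract $c$ onto $z$, so that $xy$, $cx$, $cy$ realize the three new triangle edges. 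So the object you must produce is the weaker one you mention only in passing: three disjoint connected sets $B_x\ni x$, $B_y\ni y$, $B_z\ni z$ in $G_{3-i}$ with an edge of $G_{3-i}$ between each pair (a $K_3$ minor \emph{rooted} at $x,y,z$). The ``short case analysis'' identifying $K_{1,3}$ as the unique obstruction is precisely the step you assert without carrying out, and carried out for the subdivision it would reach a false conclusion. A correct route: every vertex of $G_{3-i}$ outside $\{x,y,z\}$ has all its neighbours inside $G_{3-i}$ and hence degree $\ge3$ in $si(G_{3-i})$, and no component of $G_{3-i}-S$ with $|S|\le2$ can avoid $\{x,y,z\}$ (else $S$ would be a small cut of $G$). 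Hence either $si(G_{3-i})$ is a tree, which these two facts force to be $K_{1,3}$, or $G_{3-i}$ has a cycle $C$ with $|C|\ge3$, and Menger together with the second fact yields three disjoint paths from $x,y,z$ to $C$; their union with the three arcs of $C$ between the attachment points gives the rooted $K_3$ minor. That argument is genuinely different from, and not implied by, the one you propose.
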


For any disjoint graphs $G_0,G_1,\dots,G_k$ ($k\ge0$), let $S_2(G_0;G_1,\dots,G_k)$ denote a graph obtained by $2$-summing $G_i$ to $G_0$ for all $i >0$.

\begin{lemma} Let $e=xy$ be an edge of a $2$-connected graph $G$ of order at least three. Then $G$ has $2$-connected minors $G_0, G_1, ..., G_k$ such that $e \in G_0$, $|G_i|\ge3$ $(i>0)$, and $G=S_2(G_0;G_1,\dots,G_k)$. Moreover, if $\{x,y\}$ is a $2$-cut of $G$ then $si(G_0)=K_2$ and $k\ge2$; if $\{x,y\}$ is not a $2$-cut of $G$ then either $si(G_0)=K_3$ or $G_0$ is $3$-connected.
\label{lem:Sop}
\end{lemma}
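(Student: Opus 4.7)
The proof splits on whether $\{x,y\}$ is a 2-cut of $G$: Case~1 is by direct construction, while Case~2 will be an induction on $|V(G)|$. In Case~1, where $\{x,y\}$ is a 2-cut, let $C_1,\dots,C_p$ ($p\ge 2$) be the components of $G-\{x,y\}$. For each $i$ form $H_i^+$ as the subgraph of $G$ on $V(C_i)\cup\{x,y\}$ together with every edge of $G$ having an endpoint in $V(C_i)$, plus one new edge $e_i=xy$. Since $G$ is 2-connected, both $x$ and $y$ have a neighbor in every $C_i$, and a routine check on which vertex is deleted shows $H_i^+$ is 2-connected; it is also a minor of $G$, obtained by contracting one $C_j$ ($j\neq i$) through $x$ or $y$ to manufacture the edge $e_i$ and then deleting everything outside $V(H_i^+)$. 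I then take $G_0$ to be the graph on $\{x,y\}$ whose edges are all the $xy$-edges of $G$ together with $e_1,\dots,e_p$; so $si(G_0)=K_2$ with $k=p\ge 2$, and 2-summing each $H_i^+$ onto $G_0$ along $e_i$ recovers $G$.

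In Case~2, where $\{x,y\}$ is not a 2-cut, I induct on $|V(G)|$. The base case $|G|=3$ forces $si(G)=K_3$, so take $G_0=G$ with $k=0$. In the inductive step, if $G$ is already 3-connected take $G_0=G$; otherwise $G$ admits a 2-separation $(A,B)$ with $V(A\cap B)=\{u,v\}\neq\{x,y\}$, and since $xy$ is an edge I may arrange $e\in A$, $x,y\in V(A)$. By Lemma~\ref{lem:sumsep}(a), $A^+=A+uv$ is a 2-connected minor of $G$ with $|A^+|<|G|$. Before invoking induction I would verify that $\{x,y\}$ is not a 2-cut of $A^+$: any two vertices of $V(A)\setminus\{x,y\}$ are connected in $G-\{x,y\}$, and any detour through $B$ enters and leaves via $\{u,v\}$, which is captured by the virtual edge $uv$ in $A^+$ (with a direct verification when $\{x,y\}$ and $\{u,v\}$ share a vertex). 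Induction then yields $A^+=S_2(A_0;A_1,\dots,A_{k'})$ with $A_0$ either 3-connected or $si(A_0)=K_3$ and $e\in A_0$. The edge $uv$ lies in a unique piece $A_j$: if $j=0$ I add $B^+$ as a new summand attached to $A_0$ along $uv$, and if $j\ge 1$ I replace $A_j$ by $A_j\oplus_2 B^+$ along $uv$. In either sub-case, $G_0:=A_0$ inherits the required 3-connected-or-$K_3$ property.

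The main obstacle will be verifying, in the $j\ge 1$ sub-case of Case~2, that $A_j\oplus_2 B^+$ is actually a 2-connected minor of $G$ (2-connectedness itself is automatic, since a 2-sum of 2-connected graphs is 2-connected). I would build its minor-model in $G$ directly from the given model of $A_j$ in $A^+$: if $\{B_w:w\in V(A_j)\}$ are the branch sets, with $u\in B_u$, $v\in B_v$, and each $B_w\subseteq V(A)$, then keep the $B_w$ as branch sets on the $A$-side and use singletons $\{y\}$ for each $y\in V(B)\setminus\{u,v\}$ on the $B$-side. The surviving edges of the 2-sum correspond to actual edges of $G$: the $A_j$-edges come from edges of $A$ between the $B_w$'s, and the $B^+$-edges other than the deleted virtual $uv$ are edges of $B$; edges across the two sides (which do not exist in the 2-sum) are ruled out because $V(A)\setminus\{u,v\}$ and $V(B)\setminus\{u,v\}$ are non-adjacent in $G$. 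The only fiddly bookkeeping is whether any actual $uv$-edge of $G$ lives in $A$ or in $B$, but either assignment leaves the model intact.
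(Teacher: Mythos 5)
Your proof is correct and follows essentially the same route as the paper's: the 2-cut case is handled directly via the components of $G-\{x,y\}$, and the remaining case by induction through a 2-separation and Lemma~\ref{lem:sumsep}(a). The only real difference is bookkeeping: the paper chooses the separation so that $|G'|$ is minimum subject to $e\in G'$, which forces the summing edge into $G_0$ automatically, whereas you take an arbitrary separation and instead merge $B^+$ into whichever piece $A_j$ contains $uv$ --- your minor-model verification for $A_j\oplus_2 B^+$ is precisely the step the paper's minimality trick sidesteps, and it checks out.
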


\begin{proof} 

Suppose the result is false. Then we choose a counterexample $G$ with $|G|$ minimum. If $si(G)=K_3$ or $G$ is $3$-connected then the lemma holds with $k=0$; if $\{x,y\}$ is a 2-cut then the lemma also holds by Lemma \ref{lem:sumsep}(a). Thus $G$ has a $2$-separation but $\{x,y\}$ is not a 2-cut. It follows that $G$ can be expressed as a 2-sum of two 2-connected minors $G',G''$ over edges $e'$ of $G'$ and $e''$ of $G''$. Among all possible choices, let us choose $G',G''$ such that $|G'|$ is minimum with the property that $e\in G'$. Note $e$ and $e'$ are not parallel since $\{x',y'\}$ is a 2-cut of $G$, where $e'=x'y'$, but $\{x,y\}$ is not. By the minimality of $G$, $G'$ has 2-connected minors $G_0,G_1,...,G_k$ of order $\ge 3$ such that $e\in G_0$, $G'=S_2(G_0;G_1,...,G_k)$, and either $si(G_0)=K_3$ or $G_0$ is $3$-connected. Now by the minimality of $G'$ we also have $e'\in G_0$. Therefore, $G=S_2(G_0;G'',G_1,...,G_k)$, contradicting the choice of $G$, which proves the lemma.
\end{proof}

We also have a 3-connected version of the last lemma. For any disjoint graphs $G_0,G_1,\dots,G_k$ ($k\ge0$), let $S_3(G_0;G_1,\dots,G_k)$ denote a graph obtained by $3$-summing $G_i$ to $G_0$ for all $i >0$. Let $G$ be 3-connected and let $Z \subseteq V(G)$. We call $(G,Z)$ \textit{$4$-connected} if for every $s$-separation $(G_1,G_2)$ of $G$ with $Z \subseteq V(G_1)$, either $s \geq 4$ or $s=3=|G_2|-1$. 

\begin{lemma} Let $G$ be $3$-connected and let $Z \subseteq V(G)$. If $Z$ is not a subset of any $3$-cut, then $G$ has a $3$-connected minor $G_0$ such that $Z \subseteq V(G_0)$, $(G_0,Z)$ is $4$-connected, and $G=S_3(G_0;G_1,\dots,G_k)$, where $G_1,...,G_k$ are $3$-connected of order $\ge5$. In addition, each $G_i$ $(i>0)$ is a minor of $G$ unless $si(G)$ has a cubic vertex $z$ such that $z$ is not in any triangle and $Z\subseteq \{z\}\cup N_G(z)$.
\label{lem:3sum}
\end{lemma}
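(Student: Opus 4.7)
I would prove this by induction on $|G|$. If $(G,Z)$ is already $4$-connected, then $G_0 = G$ with $k = 0$ satisfies the conclusion, so assume not. Then there is a $3$-separation $(G_1, G_2)$ of $G$ with $Z \subseteq V(G_1)$ and $|G_2| \geq 5$; choose one with $|G_1|$ minimum and, among these, with all edges of $G$ on the cut $T = V(G_1 \cap G_2) = \{x,y,z\}$ placed in $G_1$. By Lemma~\ref{lem:sumsep}(b), $G_1^+$ and $G_2^+$ are both $3$-connected, $G$ is their $3$-sum along $T$, and $|G_2^+| \geq 5$.

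The critical step is to verify that $Z$ is not contained in any $3$-cut of $G_1^+$, so the inductive hypothesis applies. Suppose for contradiction $W$ is such a cut with separation $(A,B)$ of $G_1^+$. Since the triangle on $T$ is a subgraph of $G_1^+$, a case analysis on $|W \cap T| \in \{0,1,2,3\}$ shows that $T \setminus W$ must lie entirely in one side, say in $V(A) \setminus W$. Then in $G - W$, the $G_2$-part attaches only to the $A$-side through $T \setminus W$, so $V(B) \setminus W$ (nonempty, since $(A,B)$ is a proper separation) remains disconnected from the rest. Hence $W$ is a $3$-cut of $G$ containing $Z$, contradicting the hypothesis on $G$. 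Applying the inductive hypothesis to $(G_1^+, Z)$ yields $G_1^+ = S_3(G_0; H_1, \ldots, H_m)$ with $G_0$ $3$-connected, $(G_0, Z)$ $4$-connected, and each $H_i$ $3$-connected of order $\geq 5$.

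To finish, note the summing triangle $T$ of $G_2^+$ in $G_1^+$ lies either in $G_0$ or in some $H_j$; in the latter case replace $H_j$ by the $3$-sum of $H_j$ and $G_2^+$, still $3$-connected of order $\geq 5$. This delivers the required decomposition $G = S_3(G_0; G_1, \ldots, G_k)$. For the minor-of-$G$ clause, Lemma~\ref{lem:sumsep}(b) gives $G_1^+$ as a minor of $G$ (since $|G_2| \geq 5$ precludes $si(G_2) = K_{1,3}$), so $G_0$ and each $H_i$ descend to minors of $G$; and if the inductive exception fires inside $G_1^+$, its witness cubic vertex cannot lie on $T$ (else it would belong to the triangle on $T$ present in $G_1^+$), so it also witnesses the exception in $G$. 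Finally, $G_2^+$ is a minor of $G$ unless $si(G_1) = K_{1,3}$; the edge-placement convention above makes this happen precisely when $G$ has no edges on $T$, in which case the center $c$ of the $K_{1,3}$ is a cubic vertex of $si(G)$ lying in no triangle and $Z \subseteq V(G_1) = \{c\} \cup N_G(c)$, matching the stated exception exactly.

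The main obstacle is the cut-preservation claim in the second paragraph: the case analysis using the added triangle on $T$ to constrain any separation of $G_1^+$ has to be done carefully to ensure the separation lifts to $G$. A secondary subtlety is lining up the two sources of exception (the inductive one inside $G_1^+$ and the Lemma~\ref{lem:sumsep}(b) exception for $G_2^+$) with the single exception in the statement; the edge-placement convention is selected precisely so that $si(G_1) = K_{1,3}$ translates into the statement's ``$c$-in-no-triangle'' condition.
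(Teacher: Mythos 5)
Your proof is correct and follows essentially the same route as the paper's: induct on $|G|$, split off a $3$-separation $(G_1,G_2)$ with $Z\subseteq V(G_1)$ and $|G_2|\ge5$ minimizing the $Z$-side, recurse on $G_1^+$, and reattach $G_2^+$. You are in fact more explicit than the paper on two points it leaves implicit — verifying that $Z$ lies in no $3$-cut of $G_1^+$ so the induction applies, and tracing how the $si(G_1)=K_{1,3}$ case produces exactly the stated exception.
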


\begin{proof} 
Suppose the result is false. Then we choose a counterexample $G$ with $|G|$ minimum. 
Since the result holds if $(G,Z)$ is 4-connected, we deduce $G$ has a 3-separation $(H_1,H_2)$ with $Z\subseteq V(H_1)$ and $|H_2|\ge5$. By Lemma \ref{lem:sumsep}(b), $G$ can be expressed as a 3-sum of two 3-connected graphs $G',G''$ such that $Z\subseteq V(G')$ and $|G''|\ge5$. Among all possible choices, let us choose $G',G''$ with $|G'|$ minimum. Note $G'$ is a minor of $G$ since $|G''|\ge5$. Also note $|G'|\ge5$ because otherwise $|G'|=4$ and trivially $(G',Z)$ is $4$-connected so $(G_0,G_1)=(G', G'')$ would satisfy the lemma, which contradicts the choice of $G$. As a result, $G''$ is also a minor of $G$. By the minimality of $G$, $G'$ has a 3-connected minor $G_0$ such that $Z\subseteq V(G_0)$, $(G_0,Z)$ is 4-connected, and $G'=S_3(G_0;G_1,\dots,G_k)$, where $|G_i|\ge5$ ($i>0$). By the minimality of $G'$, the summing triangle between $G'$ and $G''$ must be contained in $G_0$. From this triangle it follows that $G_1,...,G_k$ are all minors of $G'$ and $G=S_3(G_0;G'',G_1,\dots,G_k)$. This contradicts the choice of $G$ and thus the lemma is proved. 
\end{proof} 

The previous two lemmas are about how a graph can be decomposed into a star structure with a better connected center. In the following we consider how to decompose a graph into a path structure. Let $e=x_0y_0$ be a specified edge of a $2$-connected graph $G$. A sequence $G_0,G_1,...,G_n$ ($n\ge0$) of edge-disjoint subgraphs of $G$ is called a \textit{chain decomposition} of $G$ at $e$ with {\it length} $n$ if \\ 
\indent (i) $e\in G_0$; \\ 
\indent (ii) for each $i=1,...,n$, $(G_0\cup ... \cup G_{i-1},\ G_i\cup ... \cup G_n)$ is a 2-separation of $G$; \\ 
\indent (iii) let $\{x_i,y_i\} = V((G_0\cup ... \cup G_{i-1})\cap (G_i\cup ... \cup G_n))$ for $i=1,...,n$; then 
the pairs $\{x_0,y_0\}$, $\{x_1,y_1\}$, ..., $\{x_n,y_n\}$ are all distinct. 

\begin{figure}[ht]
\centerline{\includegraphics[scale=0.5]{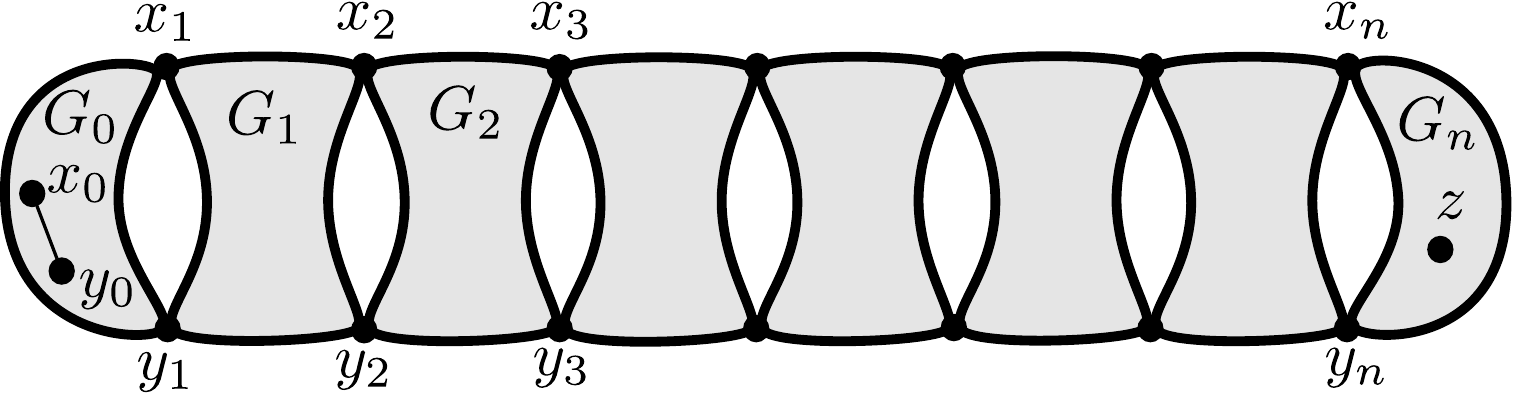}}
\caption{a chain decomposition\label{fig:bpath}}
\end{figure} 

We point out that $\{x_i,y_i\} \cap \{x_{i+1},y_{i+1}\} \neq \emptyset$ is allowed. It is clear that every 2-connected $G$ admits a chain decomposition of length 0 at any of its edges since conditions (ii-iii) are trivially satisfied. Let $a(G,e)$ denote the largest length of a chain decomposition of $G$ at $e$.

Chain decompositions and ``star" decompositions are similar, yet each allow us to focus on different aspects of a graph. A star decomposition focuses on how a graph is built around one central piece and will be used later in the paper when we have a known subdivision in a graph and want to look at possible extensions of the subdivision. A chain decomposition looks at how a graph can be broken down into a chain of $2$-connected pieces and is useful in determining long paths in a graph. The next lemma involves both decompositions.

By \textit{operation} $S$ we mean the operation of constructing $S_2(G_0;G_1,...,G_k)$ from $G_0,...,G_k$.  Starting from any class of graphs we may construct more graphs by applying operation $S$ repeatedly. In the following we make this more precise. Let $\cal G$ be a class of graphs. 
Let $\mathcal G_0$ be the class of all pairs $(G,e)$ such that $G\in\cal G$ and $e$ is an edge of $G$. For any positive integer $n$, if $\mathcal G_{n-1}$ has been  defined, let $\mathcal G_n$ consist of all pairs $(G,e)$ for which there exist $(G_0,e)\in \mathcal G_0$ and $(G_i,e_i)\in \mathcal G_{n-1}$ ($i=1,...,k$) such that $G$ is obtained by 2-summing $G_i$ to $G_0$ over $e_i$ for all $i>0$. We say each $(G,e)\in \mathcal G_n$ is constructed from graphs in $\mathcal G$ by $n$ \textit{iterations} of operation $S$.

\begin{lemma} Let $e$ be a specified edge of a $2$-connected graph $G$ with $a(G,e) \le a$. Then $(G,e)$ can be constructed from its $3$-connected minors and 2-connected minors of order $2$ or $3$ by at most $a+1$ iterations of operation $S$. 
\label{lem:aGebound}
\end{lemma}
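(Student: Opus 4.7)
The plan is to induct on $a := a(G, e)$. In both the base and inductive steps, I apply Lemma~\ref{lem:Sop} to obtain a decomposition $G = S_2(G_0; G_1, \dots, G_k)$ with $(G_0, e) \in \mathcal{G}_0$, and then control each $a(G_i, e_i)$ so that $(G_i, e_i)$ sits in $\mathcal{G}_a$. Throughout I use the monotonicity $\mathcal{G}_{n-1} \subseteq \mathcal{G}_n$, which follows by a straightforward induction from the fact that $(H, e) \in \mathcal{G}_0$ implies $(H, e) \in \mathcal{G}_n$ for all $n$ (take $k=0$ in the definition of $\mathcal{G}_n$).

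For the inductive step $a \ge 1$, the key claim is that $a(G_i, e_i) \le a - 1$ for every $i > 0$. This is trivial when $a(G_i, e_i) = 0$; when $a(G_i, e_i) = m \ge 1$, I extend a chain decomposition $H_0, H_1, \dots, H_m$ of $G_i$ at $e_i$ (with cuts $\{x_i, y_i\} = \{x_0', y_0'\}, \{x_1', y_1'\}, \dots, \{x_m', y_m'\}$) to one of $G$ at $e$ of length $m+1$ by setting $G^{(\ell)} = H_\ell$ for $\ell \ge 1$ and taking $G^{(0)}$ to be the complementary subgraph of $H_1 \cup \dots \cup H_m$ in $G$ (obtained by removing the edges of $H_1 \cup \dots \cup H_m$ and the vertices of $V(H_1 \cup \dots \cup H_m) \setminus V(H_0)$). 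The cut of the $\ell$-th 2-separation in $G$ is exactly $\{x_\ell', y_\ell'\}$: any summing vertex of $\{x_i, y_i\}$ that meets $V(H_{\ge \ell})$ must already belong to the chain cut $\{x_\ell', y_\ell'\}$, since $\{x_i, y_i\} \subseteq V(H_0) \subseteq V(H_{<\ell})$. These new cuts are pairwise distinct by the chain decomposition of $G_i$, and each differs from $\{x, y\}$ because $\{x_\ell', y_\ell'\} \subseteq V(G_i)$ while $V(G_i) \cap V(G_0) \subseteq \{x_i, y_i\}$ and the chain cuts of $G_i$ avoid $\{x_i, y_i\}$. Hence $a(G, e) \ge m+1$, proving the claim. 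The induction hypothesis then gives $(G_i, e_i) \in \mathcal{G}_{a(G_i, e_i)+1} \subseteq \mathcal{G}_a$, and so $(G, e) \in \mathcal{G}_{a+1}$.

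For the base case $a = 0$, I must show $(G, e) \in \mathcal{G}_1$. If $\{x, y\}$ is not a 2-cut of $G$, Lemma~\ref{lem:Sop} gives $G_0$ that is 3-connected or has $si(G_0) = K_3$; moreover $k = 0$, for otherwise the 2-separation between any $G_i$ and the rest would have cut $\{x_i, y_i\} \ne \{x, y\}$, producing a chain decomposition of length $1$ at $e$ and contradicting $a = 0$. Thus $G = G_0 \in \mathcal{G}_0 \subseteq \mathcal{G}_1$. If $\{x, y\}$ is a 2-cut, then $si(G_0) = K_2$, and I choose the Lemma~\ref{lem:Sop} decomposition with $k$ maximal. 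Maximality forces each $G_i$ to have $\{x, y\}$ not as a 2-cut, since otherwise Lemma~\ref{lem:Sop} applied to $(G_i, e_i)$ would yield a $K_2$-base $G_i^0$ on $\{x, y\}$ that could be absorbed into $G_0$, splitting $G_i$ further and contradicting maximality. Then a chain decomposition extension analogous to the inductive step (starting from any hypothetical length-$1$ chain decomposition of $G_i$) forces $G_i$ to have no 2-cut other than $\{x, y\}$, and combined with $\{x, y\}$ not being a 2-cut of $G_i$, this means $G_i$ has no 2-cut at all. Hence $G_i$ is 3-connected or of order $\le 3$, so $(G_i, e_i) \in \mathcal{G}_0$, and $(G, e) \in \mathcal{G}_1$.

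The central technical obstacle is verifying that the chain decomposition extensions give valid chain decompositions in $G$: in particular, that each extended cut has size exactly $2$ and remains distinct from $\{x, y\}$ and from all previous cuts. This rests on the crucial observation that the summing vertices $\{x_i, y_i\}$, being contained in $V(H_0)$, can only reappear in the tail $V(H_{\ge \ell})$ through the chain cut $\{x_\ell', y_\ell'\}$. This prevents the extended cut from accidentally acquiring a third vertex and lets the distinctness of the new cuts be inherited directly from the chain decomposition of $G_i$.
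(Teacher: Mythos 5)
Your proof is correct and follows essentially the same route as the paper's: decompose via Lemma~\ref{lem:Sop} and show each summand $G_i$ satisfies $a(G_i,e_i)\le a-1$ by extending a chain decomposition of $G_i$ at $e_i$ back into $G$. The differences are organizational (induction on $a$ rather than a minimal counterexample, and absorbing the case where $\{x,y\}$ is a $2$-cut into the same induction), and your explicit verification that the extended cuts have size exactly two and remain pairwise distinct merely spells out what the paper asserts in one line.
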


\begin{proof} 
Let $x,y$ be the two ends of $e$. We first assume $|G|>2$ and $\{x,y\}$ is not a 2-cut. In this case we claim $(G,e)$ can be constructed from its $3$-connected minors and 2-connected minors of order 3 within $a$ iterations. Suppose the claim is false. Choose a counterexample with $|G|$ as small as possible. By Lemma \ref{lem:Sop}, $G$ has 2-connected minors $G_0,G_1,...,G_k$ such that $e\in G_0$, either $si(G_0)=K_3$ or $G_0$ is 3-connected, $|G_i|\ge3$ ($i>0$), and $G=S_2(G_0;G_1,...,G_k)$. For each $i>0$, let $e_i=x_iy_i$ be the summing edge of $G_i$. By allowing different graphs to sum over edges of $G_0$ from the same parallel family, we may assume $G_i-\{x_i,y_i\}$ is connected. Then $a(G_i,e_i)\le a-1$ because otherwise, since $G-\{x,y\}$ is connected, we would have $a(G,e)\ge a(G_0\cup G_i, e) >a$. By the minimality of $G$, we deduce that each $(G_i,e_i)$ can be constructed from its $3$-connected minors and 2-connected minors of order 3 within $a-1$ iterations. It follows that $(G,e)$ can be constructed from its $3$-connected minors and 2-connected minors of order 3 within $a$ iterations. This conclusion contradicts the choice of $G$ and thus proves our claim. 

If $|G|=2$ then $a(G,e)=0$ and it is clear that $(G,e)$ can be constructed in at most one iteration. 
Now suppose $G-\{x,y\}$ has $k>1$ components. Let $G_0$ consist of $e$ and $k$ other edges parallel with $e$. Then $G$ has 2-connected minors $G_1,...,G_k$ of order $\ge3$ such that $G=S_2(G_0;G_1,...,G_k)$. For each $i=1,...,k$, let $G_i$ be summed to $G_0$ over $e_i$. Note $G_i-\{x,y\}$ is connected and $a(G_i,e_i)\le a$ for every $i$. By the above claim, every $(G_i,e_i)$ can be constructed within $a$ iterations, which implies $(G,e)$ can be constructed within $a+1$ iterations. 
\end{proof}

\section{Weighted graphs} 

In this section we prove a few technical lemmas on weighted graphs.

\begin{lemma} Let $t\ge2$ be an integer and let $(G,w)$ be a $2$-connected weighted graph with a path of weight  exceeding $(t-2)^2$. Then $G$ has a cycle of weight at least $t$ and, for any two distinct vertices $u,v$, a $uv$-path of weight at least $t/2$. \label{lem:lemmaa}\end{lemma}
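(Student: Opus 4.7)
My plan is to prove the cycle assertion by contradiction and then derive the $uv$-path assertion from it via a fan/Menger argument.

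For the cycle assertion, I will suppose for contradiction that every cycle in $G$ has weight at most $t-1$. The case $n = 1$ is easy: $P$ is a single edge of weight $> (t-2)^2$, and any cycle through it in $G$ has weight $> (t-2)^2 + 1 \ge t$ for $t \ge 2$. So I assume $n \ge 2$. Then by Whitney's theorem (applicable since $G$ is $2$-connected), the edges $e_1 = v_0 v_1$ and $e_n = v_{n-1} v_n$ lie on a common cycle $C$ of weight at most $t-1$. I write $C = e_1 \cup \alpha \cup e_n \cup \beta$ for its two arcs; since each arc has weight at least $1$, this gives $w(e_1) + w(e_n) \le t-3$ and $w(\alpha), w(\beta) \le t-4$. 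Depending on the cyclic order of $\{v_0, v_1, v_{n-1}, v_n\}$ on $C$, the arc $\alpha$ connects $v_1$ to either $v_{n-1}$ or $v_n$; let $P'$ denote the corresponding sub-path of $P$.

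In the favorable subcase where $\alpha$ is internally disjoint from $P'$, the union $\alpha \cup P'$ is a cycle in $G$ of weight exceeding $1 + w(P) - w(e_1) - w(e_n) > (t-2)^2 - t + 4$; since $(t-2)^2 - t + 4 - (t-1) = (t-3)^2 \ge 0$, this cycle has weight $\ge t$, contradicting the standing assumption. In the harder subcase, $\alpha$ shares interior vertices $v_{k_1}, \ldots, v_{k_m}$ with $P$. I plan to decompose $\alpha$ at those shared vertices into $m+1$ sub-paths $\alpha_0, \ldots, \alpha_m$, each internally disjoint from $V(P)$; each $\alpha_i$ together with the corresponding sub-path of $P$ forms a cycle of weight $\le t-1$, bounding that $P$-sub-path by $t-2$. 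I will then argue that these $m+1$ sub-paths of $P$ collectively cover $P'$, giving $w(P') \le (m+1)(t-2)$. Since $\alpha$ has at least $m+1$ edges, $m+1 \le w(\alpha) \le t-4$, hence $w(P) \le (t-3) + (t-4)(t-2) = t^2 - 5t + 5$, which is strictly less than $(t-2)^2$, again a contradiction. The small cases $t \le 3$ are essentially vacuous and handled separately.

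For the $uv$-path assertion, let $C^*$ be the cycle of weight $\ge t$ furnished by the first part. For any two vertices $u, v \in V(G)$, I will use $2$-connectivity and the fan lemma (a consequence of Menger's theorem) to produce two internally disjoint paths from each of $u$ and $v$ to distinct vertices of $C^*$. Combining these two fans with suitable arcs of $C^*$ will yield two internally disjoint $uv$-paths in $G$ whose combined weight is at least $w(C^*) \ge t$; the heavier of the two will then have weight $\ge t/2$.

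The main obstacle will be the harder subcase of the cycle argument: verifying that the $P$-sub-paths arising from the decomposition of $\alpha$ actually cover all of $P'$. This requires carefully tracking the order in which $\alpha$ visits the shared vertices $v_{k_1}, \ldots, v_{k_m}$ and arguing that the resulting intervals of $P$-indices together span the range between $\alpha$'s endpoints.
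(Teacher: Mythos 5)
Your argument is correct in substance but takes a genuinely different route from the paper's for the cycle half. The paper picks a cycle $C'$ through the two \emph{ends} of $P$ and argues by pigeonhole in the other direction: if $w(C')<t$ then $C'$ has at most $t-1$ vertices, so $V(P\cap C')$ cuts $P$ into at most $t-2$ pieces, one of which is a $C'$-path of weight at least $t-1$; adding an arc of $C'$ gives the heavy cycle in three lines, with no case analysis. You instead run a contradiction through the two end-\emph{edges} via Whitney, decompose one arc $\alpha$ at its intersections with $P$, and bound $w(P)$ from above by summing the light pieces. This works: the ``main obstacle'' you flag is not really one, since the union of the consecutive intervals $[a_i,a_{i+1}]$ determined by the positions $a_0,\dots,a_{m+1}$ of $\alpha$'s intersection points on $P$ is connected and hence equals $[\min a_i,\max a_i]\supseteq P'$, giving $w(P')\le(m+1)(t-2)$ as you want. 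The price is extra bookkeeping (the degenerate case $n=2$ where $e_1,e_n$ share a vertex and one arc is empty, and the small $t$ cases) that the paper's version avoids entirely. For the $uv$-path half, both proofs route through the heavy cycle, but your plan of two fans producing ``two internally disjoint $uv$-paths'' is both overstated (the two paths would share the connecting segments outside $C^*$, so they are not internally disjoint) and unnecessary: as in the paper, Menger applied to $\{u,v\}$ versus $V(C^*)$ gives two disjoint paths landing at distinct $a,b\in C^*$, one of the two $ab$-arcs has weight at least $t/2$, and concatenating yields the desired $uv$-path. The weight-$\ge t/2$ conclusion you draw is right; just replace the two-fan construction with this single application of Menger.
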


\begin{proof} Let $P=x\dots y$ be a path of $G$ of weight at least $(t-2)^2+1$. We first show $G$ has a cycle $C$ of weight at least $t$. Let $C'$ be a cycle containing $x$ and $y$. We assume $w(C')<t$ because otherwise $C=C'$ satisfies the requirement. Then $V(P \cap C')$ divides $P$ into at most $t-2$ subpaths and hence at least one subpath $P'$ must have weight at least $t-1$. Clearly, $P' \cup C'$ contains a cycle $C$ of weight at least $t$, as required. Since $G$ is $2$-connected, for every distinct pair of vertices $u,v$, there exist disjoint paths between $\{u,v\}$ and $C$ (where $u,v$ may be on $C$). These two paths together with $C$ contain a $uv$-path of weight at least $t/2$. \end{proof}

\begin{lemma} Let $(G, w)$ be a $2$-connected weighted graph of order $\ge3$ and let $t$ be a positive integer. Then one of the following holds. \\ 
\indent (a) $G$ has a $2$-separation $(H, J)$ with $V(H\cap J)=\{x,y\}$ such that neither $H$ nor $J$ has an $xy$-path of weight $\ge t$. \\ 
\indent (b) $G$ has a $2$-separation $(H, J)$ with $V(H\cap J)=\{x,y\}$ such that both $H$ and $J$ have an $xy$-path of weight $\ge t$. \\ 
\indent (c) $G=S_2(G_0;G_1,...,G_k)$ such that either $si(G_0)=K_3$ or $G_0$ is $3$-connected, and for each $i>0$, if $e_i=x_iy_i$ is the summing edge of $G_i$ then $G_i\backslash e_i$ has no $x_iy_i$-path of weight $\ge t$.
\label{lem:2sep}
\end{lemma}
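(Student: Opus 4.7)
I propose to prove Lemma \ref{lem:2sep} by induction on $|G|$. Assume neither (a) nor (b) holds; then every 2-separation $(H,J)$ of $G$ with $V(H\cap J)=\{x,y\}$ has exactly one side containing an $xy$-path of weight $\ge t$ and one side not. If $G$ itself has no 2-separation, then $si(G)=K_3$ (when $|G|=3$) or $G$ is 3-connected, so (c) holds with $k=0$ and $G_0=G$. This covers the base case and the ``trivial'' inductive case.

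Otherwise, choose a 2-separation $(H,J)$ with both sides connected and, by the trichotomy above, with $J$ containing no $xy$-path of weight $\ge t$. Let $w_J$ be the maximum weight of an $xy$-path in $J$, so $1\le w_J<t$. Form $H^+$ by adding a new edge $xy$ of weight $w_J$ to $H$, and $J^+$ by adding a new edge $xy$ of weight equal to the maximum $xy$-path weight in $H$. By Lemma \ref{lem:sumsep}(a), both $H^+$ and $J^+$ are 2-connected, and $G$ is a 2-sum of them over $xy$. The key observation is that for any $u,v\in V(H)$, the maximum weight of a $uv$-path in $(H^+,w^+)$ equals the maximum weight of a $uv$-path in $(G,w)$: a simple $uv$-path in $G$ visits $J\setminus\{x,y\}$ at most once, entering and exiting at $x,y$ along an $xy$-subpath of weight $\le w_J$, which the new edge of weight $w_J$ captures exactly. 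Since $|H^+|<|G|$ and $|H^+|\ge 3$, the induction hypothesis applies to $H^+$.

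If $H^+$ satisfies (a) or (b), the corresponding 2-separation $(H',J')$ of $H^+$ lifts to a 2-separation of $G$ by replacing the side containing the new edge $xy$ (say $H'$) with $(H'-xy)\cup J$; the path-weight correspondence then transfers the heavy/light profile of both sides from $H^+$ to $G$, contradicting our assumption that (a) and (b) fail for $G$. So $H^+$ satisfies (c): $H^+=S_2(G_0;G_1,\dots,G_{k-1})$ with $si(G_0)=K_3$ or $G_0$ 3-connected, and each $G_i\setminus e_i$ lacking heavy $x_iy_i$-paths. The new edge $xy$ is a non-summing edge of $G_0$ or of some $G_j$ with $j\ge 1$. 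If $xy\in G_0$, take $G_k:=J^+$ summed to $G_0$ over $xy$; since $J^+\setminus xy=J$ has no heavy $xy$-path by our choice of $J$, (c) holds for $G$. If $xy\in G_j$ for some $j\ge 1$, replace $G_j$ by $G_j'$, the 2-sum of $G_j$ and $J^+$ over $xy$; $G_j'$ is 2-connected and the correspondence applied inside $G_j$ shows that $G_j'\setminus e_j$ has the same maximum $x_jy_j$-path weight as $G_j\setminus e_j$ in $H^+$, hence remains heavy-free. The main obstacle throughout is choosing $w^+(xy)=w_J$ so that maximum $uv$-path weights match between $H^+$ and $G$ for $u,v\in V(H)$; this two-sided correspondence is precisely what lets all three cases of the induction hypothesis lift cleanly, while verifying that the resulting objects are genuine 2-separations or valid 2-sums is routine from vertex-set calculations.
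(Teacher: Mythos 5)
Your proof is correct. It shares the paper's skeleton --- isolate a $2$-separation $(H,J)$ whose side $J$ has no heavy $xy$-path, pass to $H^+$, decompose $H^+$ as $S_2(G_0;\dots)$, and reattach $J$ as a summand $J^+$ --- but the way you obtain the decomposition of $H^+$ is genuinely different. The paper chooses $(H,J)$ with $|H|$ minimum, applies the unweighted, edge-rooted decomposition Lemma \ref{lem:Sop} to $H^+$ (which forces the virtual edge $e_H=xy$ into $G_0$), and then uses the minimality of $|H|$ to conclude that every summand $G_i\setminus e_i$ is light; no analysis of outcomes (a) and (b) for $H^+$ is needed because Lemma \ref{lem:Sop} is purely structural. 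You instead run a weighted induction on $|G|$, give the virtual edge the weight $w_J$ of a heaviest $xy$-path in $J$ (the same device the paper formalizes separately as Lemma \ref{lem:lemmab}), and consequently must do two extra pieces of work the paper avoids: lifting outcomes (a) and (b) from $H^+$ back to $G$, and handling the possibility that the virtual edge lands inside a summand $G_j$ rather than in $G_0$, since the induction hypothesis, unlike Lemma \ref{lem:Sop}, is not rooted at a specified edge. You handle both correctly via the path-weight correspondence. Your route buys independence from Lemma \ref{lem:Sop} and from any extremal choice of $H$, at the cost of the extra case analysis. One small point worth making explicit in a final write-up: $|V(H)|\ge 3$, so the induction hypothesis really does apply to $H^+$; this holds because a side of a separation whose vertex set is just $\{x,y\}$ would force the other side to be spanning, contradicting the definition of a separation.
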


\begin{proof} 
Suppose the lemma is false. Let $(G,w)$ be a counterexample on the fewest vertices. If $G$ has no 2-separations then (c) would hold with $k=0$. Hence $G$ has a $2$-separation $(H, J)$ with $V(H\cap J)=\{x,y\}$ such that $H$ has an $xy$-path of weight $\ge t$ but $J$ does not. Among all such 2-separations we choose one with $|H|$ minimum. Since $|H|$ is a minimum, if $H-\{x,y\}$ is not connected, then (b) would hold; thus $H-\{x,y\}$ is connected. Let $H^+$ be formed from $H$ by adding a new edge $e_H=xy$ and let $J^+$ be formed similarly. By Lemma \ref{lem:Sop}, $H^+$ has 2-connected minors $G_0,G_1,...,G_k$ of order $\ge 3$ such that $e_H\in G_0$, either $si(G_0)=K_3$ or $G_0$ is 3-connected, and $H^+=S_2(G_0;G_1,...,G_k)$. For each $i>0$, let $G_i$ be 2-summed to $G_0$ over $e_i=x_iy_i$. Then the minimality of $H$ implies $G_i\backslash e_i$ has no $x_iy_i$-path of weight $\ge t$. It follows that $G=S_2(G_0;J^+,G_1,...,G_k)$ and the decomposition satisfies (c). This contradicts the choice of $G$ and thus it proves the lemma. \end{proof}      

In the next lemma we use the following terminology. Let $(G,w)$ be a weighted graph and let $(G_1,G_2)$ be a 2-separation of $G$ with $V(G_1 \cap G_2) = \{x,y\}$. For $i=1,2$, define $(G_i^+,w_i)$ where $G_i^+$ is obtained from $G_i$ by adding a new edge $e_i=xy$, $w_i(e_i)$ is equal to the maximum weight of an $xy$-path in $G_{3-i}$, and $w_i(e) = w(e)$ for all edges $e$ of $G_i$. 

\begin{lemma}
$(G,w)$ contains $\theta_{a,b,c}$ {\small if and only if} at least one of $(G_1^+,w_1)$ and $(G_2^+,w_2)$ contains $\theta_{a,b,c}$.
\label{lem:lemmab}
\end{lemma}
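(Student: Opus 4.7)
The plan is to prove the two implications of the biconditional separately; both rely on substituting the added edge $e_i=xy$ for an $xy$-path on the opposite side of the cut.

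For the $(\Leftarrow)$ direction, suppose (say) $(G_1^+,w_1)$ contains a $\theta_{a,b,c}$-subdivision with independent paths $P_1,P_2,P_3$. Since the three paths are pairwise internally vertex-disjoint, at most one of them uses the single edge $e_1$. If none does, the subdivision already lies in $G_1\subseteq G$ and we are done. Otherwise, let $Q$ be a maximum-weight $xy$-path in $G_2$, so that $w(Q)=w_1(e_1)$, and replace $e_1$ by $Q$ in the unique path that uses $e_1$. The interior of $Q$ lies in $G_2-\{x,y\}$, hence is disjoint from $G_1$ and from the other two paths; thus the rerouted configuration is a valid theta subdivision in $G$, and each of the three path weights is preserved.

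For the $(\Rightarrow)$ direction, fix a $\theta_{a,b,c}$-subdivision $H=P_1\cup P_2\cup P_3$ in $G$ with degree-3 vertices $u,v$. The crucial observation is that, since the three $P_j$ are internally vertex-disjoint, any $z\in\{x,y\}\setminus\{u,v\}$ is internal to at most one of them. We split into cases on the location of $u,v$ relative to $\{x,y\}$. If $\{u,v\}=\{x,y\}$, each $P_j$ is an $xy$-path whose interior avoids $\{x,y\}$, so $P_j$ lies wholly in $G_1$ or wholly in $G_2$; by pigeonhole, two of them lie on the same side, say $G_1$, and either the third is there too (so $H\subseteq G_1^+$) or it is in $G_2$ and we replace it by $e_1$, with weights preserved because $w_1(e_1)$ is at least the weight of any $xy$-path in $G_2$. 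In all remaining cases where $u,v$ are on the same side or exactly one of them is in $\{x,y\}$, any $P_j$ that visits the opposite side must enter and exit it through $\{x,y\}$ and hence use both available internal cut-slots, so by the observation at most one $P_j$ can do so. If none does, $H$ lies on one side; if exactly one does, that path contains a single $xy$-subpath $Q$ with interior on the opposite side, and replacing $Q$ by the appropriate $e_i$ produces the required theta in $G_i^+$. The only remaining configuration, $u\in G_1-\{x,y\}$ and $v\in G_2-\{x,y\}$, cannot occur: each of the three $uv$-paths would have to contain an internal vertex of $\{x,y\}$, yet only two such slots are available---a contradiction.

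The main obstacle is the forward direction, specifically the pigeonhole arguments that pin down how few of the three paths can straddle the cut in each configuration of $u,v$. Once these are in hand, the remainder is a uniform cut-and-paste between the two sides, with the weight bookkeeping handled automatically by the definition $w_i(e_i)=\max\{w(R):R\text{ is an }xy\text{-path in }G_{3-i}\}$.
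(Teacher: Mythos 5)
Your proposal is correct and follows essentially the same route as the paper: show $u,v$ lie on one side of the cut, argue that at most one of the three paths can cross into the other side, and swap that crossing portion (or the added edge) for an $xy$-path of matching weight, with the definition $w_i(e_i)=\max\{w(R)\}$ handling the weight bookkeeping. Your case analysis is just a more explicit version of the paper's terser observation that either $T\subseteq G_i$ or $T\cap G_{3-i}\subseteq P_j$ for a single $j$.
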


\begin{proof}
Suppose $(G,w)$ contains $\theta_{a,b,c}$. Then $G$ contains two vertices $u,v$ and three independent $uv$-paths $P_1,P_2,P_3$ of weight at least $a,b,c$, respectively. Observe that both $u,v$ are contained in $G_i$ for some $i$ because otherwise we would have $u\in G_j-\{x,y\}$ and $v\in G_{3-j}-\{x,y\}$ for some $j$, which is impossible. Let $T=P_1\cup P_2\cup P_3$. Then either $T \subseteq G_i$ or $T\cap G_{3-i}\subseteq P_j$ for some $j$. In the first case $T$ is a $\theta_{a,b,c}$ contained in $(G_i^+,w_i)$ while in the second case replacing $T\cap G_{3-i}$ by $e_i$ in $T$ results in a $\theta_{a,b,c}$ contained in $(G_i^+,w_i)$.

Conversely, suppose some $(G_i^+,w_i)$ contains a $\theta_{a,b,c}$ graph $T$. If $e_i\notin T$ then $T$ is a $\theta_{a,b,c}$ graph of $(G,w)$. So assume $e_i\in T$. Form a new theta graph $T'$ by replacing $e_i$ in $T$ with an $xy$-path of $G_{3-i}$ of weight equal to the weight of $e_i$. Then $T'$ is a $\theta_{a,b,c}$ graph of $(G,w)$.
\end{proof}

\begin{lemma} Let $(G,w)$ be $\theta_{t,t,t}$-free, where $G$ is $3$-connected and planar. Suppose $C$ is a facial cycle such that $|C| \geq 3t$ or $C$ contains two edges each of weight $\ge t$. If each edge of $C$ has the maximum weight among edges parallel to it, then $G$ has no $C$-path of weight $\geq 2t$ and $G \backslash E(C)$ has no edge of weight $\geq t$. 
\label{lem:removeC}
\end{lemma}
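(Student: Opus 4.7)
My plan is to argue by contradiction: assume $(G,w)$ is $\theta_{t,t,t}$-free yet contains either a $C$-path of weight $\geq 2t$ or an edge of $G\setminus E(C)$ of weight $\geq t$, and then produce a $\theta_{t,t,t}$. A useful preliminary observation is that under either hypothesis on $C$, whenever vertices $u,v \in V(C)$ divide $C$ into arcs $C_1, C_2$ with $w(C_1)<t$, one automatically has $w(C_2) \geq 2t$: when $|C|\geq 3t$ this follows from $w(C)\geq 3t$, and when $C$ has two edges of weight $\geq t$ both heavy edges must lie in $C_2$ since a subpath of weight $<t$ cannot contain a heavy edge.

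I first dispose of the easy subcases of the edge statement. If $e \in E(G)\setminus E(C)$ of weight $\geq t$ is parallel to a $C$-edge $e'$, the maximum-weight hypothesis yields $w(e')\geq t$, and the three internally disjoint paths $e$, $e'$, $C\setminus e'$ at the common endpoints of $e,e'$ form $\theta_{t,t,t}$: the first two have weight $\geq t$ and the third does too, either because $|E(C)|-1\geq 3t-1\geq t$ or because $C$ has a second heavy edge distinct from $e'$. A chord $e$ of $C$ with a light arc and a heavy arc, or an edge $e$ with an endpoint off $C$, is handled by the same bridge/3-connectivity scheme described below for $C$-paths, using $e$ in place of the split vertex of $P$ and invoking the preliminary observation to guarantee the heavy arc.

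For the $C$-path case, let $P$ be a $C$-path of weight $\geq 2t$ with ends $u,v$. If both arcs satisfy $w(C_1),w(C_2)\geq t$, then $P\cup C_1\cup C_2$ is already a $\theta_{t,t,t}$, so assume $w(C_1)<t$ and hence $w(C_2)\geq 2t$. Split $P$ at an interior vertex $x$ into $P_1=P[u,x]$ and $P_2=P[x,v]$ with $w(P_1),w(P_2)\geq t$. By 3-connectivity find a $(C\cup P)$-path $Q$ from $x$ with other endpoint $z\in V(C_2)\setminus\{u,v\}$; this produces a subdivision of $K_4$ on branch vertices $u,v,x,z$. Writing $\alpha=w(C_2[v,z])$ and $\beta=w(C_2[z,u])$ so that $\alpha+\beta\geq 2t$, assume WLOG $\alpha\geq t$. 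Then the three internally disjoint paths $P_2$, $P_1\cup C_1$, and $Q\cup C_2[v,z]$ between $x$ and $v$ have weights at least $t$, $t+w(C_1)\geq t$, and $w(Q)+\alpha\geq t$ respectively, giving $\theta_{t,t,t}$.

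The main obstacle is forcing $z\in V(C_2)$. Here planarity saves us: let $B$ be the bridge of $C$ containing $P$. If $B$'s attachments to $V(C)$ were entirely inside $V(C_1)\cup\{u,v\}$, any distinct bridge $B''$ with attachments in both $V(C_1)\setminus\{u,v\}$ and $V(C_2)\setminus\{u,v\}$ would contain a $C$-path crossing $P$ in the planar embedding with $C$ facial, impossible since distinct bridges share vertices only on $V(C)$ and the endpoints of the putative crossing path would be disjoint from $\{u,v\}$. Without such a crossing bridge, $\{u,v\}$ would separate $V(C_2)\setminus\{u,v\}$ from $V(B)\setminus\{u,v\}$ in $G-\{u,v\}$, contradicting 3-connectivity. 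Hence $B$ itself attaches at some $a_2\in V(C_2)\setminus\{u,v\}$, and a Menger argument from $x$ to the target set $\{u,v,a_2\}$ produces three internally disjoint paths of which the one not contained in $P$ gives the desired $Q$ after shortening at its first $V(C)$-vertex. The residual difficulty that $P$ may not admit a clean split into halves of weight $\geq t$ is handled by anchoring the theta at the two endpoints of the single ``leaping'' edge of $P$ where the partial-sum sequence skips over the window $[t,\,w(P)-t]$, combining the heavy half of $P$ on one side with the same planar bridge argument to route a third path into $V(C_2)$.
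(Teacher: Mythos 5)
Your overall architecture inverts the paper's: you place the degree-three branch vertex $x$ in the interior of $P$ and route one new path out to the heavy arc $C_2$, whereas the paper places it on $C_2$ (where the hypothesis that $|C|\ge 3t$ or that $C$ has two heavy edges always supplies a vertex $x$ splitting $C_2$ into two arcs of weight $\ge t$) and routes one new path into the interior of $P$. The paper's orientation is the robust one: its third branch only needs \emph{one} of the two subpaths of $P$ cut off by the landing vertex to be heavy, which is automatic from $w(P)\ge 2t$, while yours needs \emph{both} of $P[u,x]$ and $P[x,v]$ to be heavy, which can fail. That is exactly your ``leaping edge'' case, and the one-line repair you propose does not work. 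Take $t=10$ and $P$ with consecutive edge weights $9,2,9$: no interior vertex splits $P$ into halves of weight $\ge t$, the leaping edge $ab$ has weight only $2$, and anchoring a theta at $a$ and $b$ fails --- the edge $ab$ is too light to serve as a branch, and the two natural detours $P[a,u]\cup C_1\cup P[v,b]$ and $P[a,u]\cup C_2\cup P[v,b]$ share the segments $P[u,a]$ and $P[b,v]$ internally, so they cannot both be used. (The sub-case where $P$ is a single edge of weight $\ge 2t$ is swallowed by the same gap; there one must instead argue from planarity of $G$ with $C$ facial and $3$-connectivity that such a chord would be parallel to a $C$-edge, contradicting the maximal-parallel-weight hypothesis.) The correct resolution is the paper's: split $C_2$ at a balanced vertex, take a fan of three independent paths from it to $V(P)$, and combine the middle one with whichever half of $P$ is heavy.

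A secondary gap is the routing of $Q$. You need a $(C\cup P)$-path from the \emph{specific} splitting vertex $x$ to $V(C_2)\setminus\{u,v\}$, but neither the bridge argument nor the fan from $x$ to $\{u,v,a_2\}$ delivers this: the third fan path may meet $P$ or $C_1$ before reaching $C_2$, and shortening it at its first $V(C)$-vertex may land on $C_1$. Indeed all neighbours of $x$ may lie on $P\cup C_1$ even though the bridge containing $P$ attaches to $C_2$ somewhere else, in which case no such $Q$ exists at $x$ at all; the attachment to $C_2$ is only guaranteed from \emph{some} interior vertex of $P$, which again need not be a balanced splitting vertex. Both defects disappear in the paper's version, where the fan is taken from the cycle side and only the middle path of the fan needs to be controlled.
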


\begin{proof} Suppose, for the sake of contradiction, $G$ contains a $C$-path $P$ with $w(P) \geq 2t$. Let $v_1$ and $v_2$ be the two ends of $P$. If $C[v_1,v_2]$ and $C[v_2,v_1]$ both have weight at least $t$, then there is a $\theta_{t,t,t}$ in $G$ at $v_1$ and $v_2$. Hence one of these paths, say $C[v_1,v_2]$, has weight less than $t$ and so $C[v_2,v_1]$ has a vertex $x$ such that $C[v_2,x]$ and $C[x,v_1]$ each has weight at least $t$.  
Note $|P|\ge3$ because otherwise, since $C$ is a facial cycle and $V(P)$ is not a 2-cut, $C[v_1,v_2]$ must have only one edge and this edge is parallel to the unique edge of $P$. This contradicts our assumption on $C$ since $w(P)>w(C[v_1,v_2])$. Since $G$ is $3$-connected, it has three independent paths $Q_1,Q_2,Q_3$ from $x$ to distinct vertices of $P$, where the paths are listed in the order in which their ends appear on $P$. If $C'$ is the cycle contained in $Q_1\cup Q_3\cup P$, then $Q_2$ intersects $C'$ only at $x$ and $P$, which implies $Q_2$ intersects $C$ only at $x$. Therefore, $C\cup P\cup Q_2$ contains a $\theta_{t,t,t}$ at $x$ and either $v_1$ or $v_2$.

Suppose $G \backslash E(C)$ has an edge $e$ with $w(e) \geq t$. Find two disjoint paths from the ends of $e$ to $C$ and let $P$ be the $C$-path consisting of $e$ and these two paths. Now by an argument similar to the one used above, we find a vertex $x$ and a path $Q_2$ from $x$ to $y$ on $P$ and then a $\theta_{t,t,t}$ in $(G,w)$. Previously, we required $w(P)\ge 2t$ so that at least one of the two subpaths of $P$ divided by $y$ would have length at least $t$.  Now since $P$ in this case contains an edge $e$ of weight at least $t$, taking the part of $P$ that contains $e$ will have the same result. \end{proof}

In the next lemma, the graphs in the statement are not weighted but a weighted graph is defined and used in the proof. 

\begin{lemma} If $k\ge1$ then $\ell(S_2(G_0;G_1,\dots,G_k)) \leq (\ell(G_0)+2) \cdot \text{max}\{\ell(G_1),\dots,\ell(G_k)\}$.
\label{lem:Sdecompbd} 
\end{lemma}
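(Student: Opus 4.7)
The plan is to take a longest path $P$ in $G := S_2(G_0;G_1,\dots,G_k)$ and project it to a weighted path in $G_0$. For each $i\ge 1$ let $e_i=x_iy_i$ be the (distinct) summing edge of $G_0$ along which $G_i$ is attached, and set $M=\max_{i\ge 1}\ell(G_i)$; note $M\ge 1$ since each $G_i$ contains its summing edge. Following the hint that a weighted graph is used in the proof, I would endow $G_0$ with the weight function $w(e_i)=\ell(G_i)$ for $1\le i\le k$ and $w(e)=1$ on every other edge, so that $w(e)\le M$ for all edges of $G_0$, and in particular $w(Q)\le \ell(G_0)\cdot M$ for every path $Q$ of $G_0$.

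Next I would decompose $P$ into its maximal subpaths $P^1,\dots,P^m$, each having all of its edges in a single piece $G_{\pi(j)}$ with $\pi(j)\in\{0,1,\dots,k\}$. Since the pieces overlap only in $V(G_0)$, every breakpoint between consecutive subpaths lies in $V(G_0)$, and any middle subpath $P^j$ inside some $G_i$ with $i>0$ must have its two endpoints in $V(G_0)\cap V(G_i)=\{x_i,y_i\}$; because $P$ has no repeated vertex, these endpoints are exactly $x_i$ and $y_i$. I would then form the projection $P^*\subseteq G_0$ by keeping every $P^j$ that lies in $G_0$ as is, replacing every middle $P^j$ inside a piece $G_i$ ($i>0$) by the single edge $e_i$, and deleting the at most two end subpaths that lie in some $G_i$ with $i>0$. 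The vertex sequence of $P^*$ is a subsequence of that of $P$, hence has no repeats, and any two consecutive vertices of $P^*$ are joined by an edge of $G_0$ (an original edge of $P$ or a summing edge). Hence $P^*$ is a genuine path in $G_0$, and therefore $w(P^*)\le \ell(G_0)\cdot M$.

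To finish, I would relate $|P|$ to $w(P^*)$. Each subpath of $P$ that lies in $G_0$ contributes its length to $w(P^*)$ one-for-one, while each middle subpath of $P$ inside a piece $G_i$ ($i>0$) has length at most $\ell(G_i)=w(e_i)$; together these account for $|P|$ minus the contribution of the at most two end-subpaths-in-pieces, each of length at most $M$. Thus $|P|\le w(P^*)+2M\le \ell(G_0)\cdot M+2M=(\ell(G_0)+2)M$. The only point that really needs care is verifying that $P^*$ is a genuine path in $G_0$ rather than merely a walk; this is what rules out $P$ re-entering the same piece $G_i$ twice (because such re-entries would force repetition of $x_i$ or $y_i$ in $P$) and is precisely why end subpaths have to be handled separately from middle ones.
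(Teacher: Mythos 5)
Your proof is correct and follows essentially the same route as the paper: weight the summing edges of $G_0$ by the lengths of the attached pieces, project a longest path of the sum onto a path of $G_0$ by contracting the middle excursions into pieces to their summing edges, and account separately for the at most two end excursions with the $+2M$ term. The only cosmetic difference is that the paper keeps the two end excursions attached to the projected path rather than deleting them, which changes nothing in the bound.
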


\begin{proof} For each $i=1,...,k$, let $e_i$ be the edge of $G_0$ such that $G_i$ is 2-summed to $G_0$ over $e_i$. Let $L=\max\{\ell(G_1),\dots,\ell(G_k)\}$. Let $w$ be a weight function of $G_0$ such that $w(e_i)=L$ for $i=1,...,k$ and $w(e)=1$ for all other edges. Now we consider any longest path $P$ of $S_2(G_0;G_1,\dots,G_k)$. Let $\cal Q$ be the set of all maximal subpaths $Q$ of $P$ such that $\emptyset\ne E(Q) \subseteq E(G_i)$ for some $i\ne0$. We modify $P$ as follows. For each $Q\in \cal Q$, if $Q$ is contained in $G_i$ and the two ends of $Q$ are the two ends of $e_i$ then in $P$ we replace $Q$ by $e_i$. Let $P'$ be the resulting path. Note $P'$ is the union of a path $P''$ of $G_0$ and up to two members of $\cal Q$, each containing an end of $P$. It follows that $||P||\le L + w(P'') + L \le \ell(G_0)L + 2L = (\ell(G_0)+2)L$. \end{proof}

\section{Excluding a large restricted theta graph}

In this section we prove characterizations of $\theta_{1,2,t}$-, $\theta_{2,2,t}$-, $\theta_{1,t,t}$-, and $\theta_{2,t,t}$-free graphs. For a proper subgraph $H$ of $G$, a \textit{bridge} of $H$ or an $H$-{\it bridge} is either a subgraph of $G$ induced by the edges of a component $C$ of $G-V(H)$ together with the edges linking $C$ to $H$, or a subgraph induced by an edge not in $H$ but with both ends in $H$. We will call the second type of bridges {\it trivial}. The vertices of an $H$-bridge that are in $H$ are the \textit{feet} of the bridge.

We begin with $\theta_{1,2,t}$-free graphs. The characterization is intuitive and requires only a very short proof. It is easy to see that cycles are $\theta_{1,2,t}$-free for all $t \geq 2$.  

\begin{theorem} Let $t \geq 2$ be an integer. Then every $2$-connected simple graph $G$ with $\ell(G) \geq 4t^2$ either contains $\theta_{1,2,t}$ or is a cycle. 
\label{thm:12t}
\end{theorem}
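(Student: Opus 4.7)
The plan is to locate a long cycle in $G$ and then use $2$-connectivity to produce a $C$-path, so that the three internally disjoint paths between the $C$-path's ends give the required theta subdivision.

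First, assume $G$ is not a cycle. Viewing $G$ as a weighted graph with the constant weight $1$ (so length equals weight), I would apply Lemma~\ref{lem:lemmaa} with target cycle weight $2t$: since $\ell(G) \geq 4t^2 > (2t-2)^2$ for $t\geq 2$, the lemma produces a cycle $C$ in $G$ with $|C|\geq 2t$. Because $G$ is $2$-connected and $G\neq C$, the graph has either a vertex outside $V(C)$ or an edge outside $E(C)$. In the first case, $2$-connectivity supplies two internally disjoint paths from that vertex to distinct vertices of $C$, which concatenate into a $C$-path. In the second case, the extra edge is a chord of $C$ and is itself a $C$-path of length $1$. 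Either way, we obtain a $C$-path $P$ with ends $u,v\in V(C)$.

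Now consider the three internally disjoint $uv$-paths $P$, $C[u,v]$, and $C[v,u]$. Since $|C|\geq 2t$, at least one of the two arcs has length $\geq |C|/2 \geq t$. Since $G$ is simple, at most one of these three paths can be the edge $uv$ itself, so at least two have length $\geq 2$. Arranging the three lengths in nondecreasing order yields values $\geq 1$, $\geq 2$, and $\geq t$, which is exactly a subdivision of $\theta_{1,2,t}$ in $G$.

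I do not expect a genuine obstacle in this argument, consistent with the paper's remark that the proof is very short. The only mildly subtle point is the simplicity argument ruling out two length-$1$ paths among $P$, $C[u,v]$, $C[v,u]$: if $P$ were a chord of length $1$ and one arc of $C$ also had length $1$, then the chord and that arc would be parallel edges, contradicting simplicity. Thus the length profile of the three paths is always compatible with $\theta_{1,2,t}$, and the proof is complete.
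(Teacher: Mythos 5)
Your proof is correct and follows essentially the same route as the paper's: invoke Lemma~\ref{lem:lemmaa} to find a cycle of length at least $2t$, use $2$-connectivity to obtain a $C$-path (the paper phrases this via a bridge of $C$), and use simplicity to rule out two parallel length-$1$ paths. The only cosmetic difference is that the paper takes the longer arc (length $>t$) as the long path and handles the length-$\geq 2$ requirement with the same simplicity observation you make explicit.
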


\begin{proof} Let $G$ be a $2$-connected simple graph with $\ell(G) \geq 4t^2$. By Lemma \ref{lem:lemmaa}, $G$ contains a cycle $C$ of length exceeding $2t$. If $G \neq C$, then $G$ has a bridge $B$ of $C$. Since $G$ is $2$-connected, $B$ has at least two feet along $C$, say $u$ and $v$. Suppose without loss of generality, $|C[u,v]| \geq |C[v,u]|$. Then $||C[u,v]||>t$ since $|C|>2t$. Let $Q$ be a $uv$-path of $B$. Then $C[u,v] \cup C[v,u]\cup Q$ is a subdivision of $\theta_{1,2,t}$ since $G$ is simple. \end{proof}  
 
A graph is \textit{outerplanar} if it has a plane embedding in which all vertices are on the outer cycle. Outerplanar graphs are known to be $\theta_{2,2,2}$-free ($\theta_{2,2,2} \cong K_{2,3}$) and thus are $\theta_{2,2,t}$-free for all $t \geq 2$. 

\begin{theorem} Let $t \geq 2$ be an integer. Then every $2$-connected graph $G$ with $\ell(G) \geq 4t^2$ either contains $\theta_{2,2,t}$ or is outerplanar.
\label{thm:22t}
\end{theorem}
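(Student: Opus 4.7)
The plan is to take $C$ to be a longest cycle of $G$, show that every $C$-bridge must be a single chord, and then show that no two chords cross; these two facts together give that $G$ is outerplanar.

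First, by Lemma~\ref{lem:lemmaa} applied with parameter $2t+1$ (which requires a path of length exceeding $(2t-1)^2$, and indeed $4t^2 > (2t-1)^2$), $G$ has a cycle of length $\geq 2t+1$. So the longest cycle $C$ satisfies $|C| \geq 2t+1 \geq 5$, using $t \geq 2$.

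Next I argue that every $C$-bridge is trivial. Suppose some $C$-bridge $B$ is non-trivial, and let $K \subseteq G - V(C)$ be its interior component. For any two feet $u, v$ of $B$, picking neighbors $u', v' \in K$ of $u$ and $v$ respectively and joining them by a path in the connected set $K$ produces a $uv$-path in $B$ of length at least $2$ whose interior lies in $K$ and is therefore disjoint from $C$. If $B$ has two feet $u, v$ that are non-adjacent on $C$, then both arcs of $C$ between $u$ and $v$ have length $\geq 2$ and the longer has length $\geq \lceil |C|/2 \rceil \geq t+1$, so together with the $B$-path we get a subdivision of $\theta_{2,2,t}$, contradicting our assumption. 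Otherwise every pair of feet of $B$ is adjacent on $C$; since $|C| \geq 5$, the cycle $C$ has no triangle, so $B$ has exactly two feet $u, v$ with $uv \in E(C)$, and replacing the edge $uv$ in $C$ by a $uv$-path in $B$ of length $\geq 2$ yields a cycle strictly longer than $C$, contradicting the maximality of $C$. Hence every $C$-bridge is a single chord, so in particular $V(G) = V(C)$.

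Finally, suppose for contradiction two chords $e_1 = u_1v_1$ and $e_2 = u_2v_2$ cross, so that $u_1, u_2, v_1, v_2$ appear in this cyclic order on $C$, dividing $C$ into arcs $A_1, A_2, A_3, A_4$ of respective lengths $a_1, a_2, a_3, a_4$. Since $V(G) = V(C)$, we have $|C| = |V(G)| \geq \ell(G) + 1 \geq 4t^2 + 1$, and so some $a_i \geq t$; say $a_1 \geq t$. Then the three paths $A_1$, $e_1 \cup A_2$, and $A_4 \cup e_2$ are internally disjoint $u_1u_2$-paths of lengths $a_1 \geq t$, $a_2 + 1 \geq 2$, and $a_4 + 1 \geq 2$, forming a subdivision of $\theta_{2,2,t}$, a contradiction. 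Hence no two chords cross, and $G$ is outerplanar.

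The main obstacle is the crossing-chords step: Lemma~\ref{lem:lemmaa} alone only gives $|C| \geq 2t+1$, which is insufficient to force one of the four arcs cut off by a crossing pair of chords to have length $\geq t$. The key is to first eliminate non-trivial $C$-bridges, which forces $V(G) = V(C)$ and therefore the much stronger bound $|C| \geq \ell(G) + 1 \geq 4t^2 + 1$ on the cycle length.
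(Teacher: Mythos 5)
Your proof is correct and follows essentially the same route as the paper's: take a longest cycle $C$ (of length $>2t$ by Lemma~\ref{lem:lemmaa}), use a nontrivial $C$-bridge to produce either a longer cycle or a $\theta_{2,2,t}$, conclude $C$ is Hamiltonian so $|C|\ge 4t^2$, and then use a long arc of a crossing pair of chords to build a $\theta_{2,2,t}$. Your treatment of bridges with more than two feet is slightly more explicit than the paper's, but the argument is the same.
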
 

\begin{proof} Let $G$ be $2$-connected with $\ell(G) \geq 4t^2$ and let $C$ be a longest cycle of $G$. By Lemma \ref{lem:lemmaa}, $|C|>2t$. Suppose $C$ is not a Hamilton cycle. Then $G$ has a nontrivial bridge $B$ of $C$. Since $G$ is $2$-connected, $B$ has at least two feet along $C$, say $u$ and $v$. If $u$ and $v$ are adjacent along $C$, then $G$ contains a cycle longer than $C$: replace the edge $uv$ in $C$ with a path through $B$ of length $\ge 2$. Hence $u$ and $v$ are not adjacent in $C$ and thus $G$ contains a $\theta_{2,2,t}$ graph at $u$ and $v$: one path of length $\ge2$ is through $B$ and the other two paths are $C[u,v]$ and $C[v,u]$. Since $|C|\ge 2t$, one of these paths necessarily has length $\ge t$.

Now $C$ is a Hamilton cycle. Suppose $uv$, $xy$ are chords of $C$ such that $u,x,v,y$ are distinct and they appear in that forward order along $C$.  Since $|C|=|G|\ge \ell(G)\ge 4t^2$, at least one of $C[u,x], C[x,v], C[v,y], C[y,u]$ has length $\ge t$. Without loss of generality, suppose $||C[u,x]||\ge t$. Then $G$ contains a $\theta_{2,2,t}$ graph at $u$ and $x$: the path of length $\ge t$ is $C[u,x]$ and the two paths of length $\ge2$ each use one of the edges $uv$ and $xy$. Hence $C$ has no crossing chords and $G$ is outerplanar. \end{proof}

To describe $\theta_{1,t,t}$-free graphs, we define a new class of graphs. For any family $\mathcal{G}$ of $2$-connected graphs, let $C(\mathcal{G})$ be the class of graphs constructed by $2$-summing graphs from $\mathcal{G}$ to a cycle.

\begin{theorem} There exists a function $f_{\ref{thm:1tt}}(t)$ such that every $2$-connected graph $G$ with $\ell(G) \geq f_{\ref{thm:1tt}}(t)$ either contains $\theta_{1,t,t}$ or is in $C(\mathcal{L}_{8t^2})$ where $t \geq 3$ is an integer. Additionally, all graphs in $C(\mathcal{L}_t)$ are $\theta_{1,t,t}$-free. 
\label{thm:1tt}
\end{theorem}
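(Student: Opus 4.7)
\emph{Converse direction.} My plan for the easy direction is the following. Let $G \in C(\mathcal{L}_t)$ with base cycle $C^*$ and summands $H_1,\dots,H_k \in \mathcal{L}_t$, and suppose for contradiction that $G$ contains $\theta_{1,t,t}$ at some $(u,v)$ via internally disjoint paths $P_1,P_2,P_3$ with $|P_2|,|P_3|\ge t$. If $u$ lies strictly inside some $H_i$ but $v$ does not, then the 2-cut $\{x_i,y_i\}$ separates $u$ from $v$, so by Menger's theorem there are only two internally disjoint $uv$-paths, a contradiction. If both $u,v$ lie strictly inside the same $H_i$, then any path leaving $H_i$ uses both $x_i$ and $y_i$ internally, so at most one internally disjoint $uv$-path lies outside $H_i$, while internal paths have length $<t$; hence at most one of the three paths can be long. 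The main case $u,v\in V(C^*)$ uses a tracing argument: contracting each $H_i$-detour to its summing edge gives each $uv$-path a trace in $C^*$, and since each $H_i$ lacks a path of length $\ge t$, the two long paths $P_2,P_3$ must have distinct traces, which are then the two arcs of $C^*$ between $u$ and $v$; the third path must therefore lie inside a single summand $H_i$ with $\{u,v\}$ as its summing vertices, but then one arc of $C^*$ is just the summing edge $e_i$ and the corresponding $P_j$ becomes a detour through $H_i$ of length $<t$, a contradiction.

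\emph{Forward direction.} Assume $G$ is 2-connected, $\theta_{1,t,t}$-free, and $\ell(G)\ge f(t)$ with $f(t)$ chosen of order $t^4$. My plan is to take $C$ a longest cycle of $G$; Lemma~\ref{lem:lemmaa} together with the choice of $f(t)$ forces $|C|$ to exceed $8t^2$ by a wide margin. I would then prove: (a) every $C$-bridge has exactly 2 feet on $C$; (b) for each 2-foot bridge with feet $u,v$, the shorter arc of $C$ between $u$ and $v$ has length $<t$; and (c) each bridge together with a virtual summing edge joining its feet admits no path of length $\ge 8t^2$. Granting (a)--(c), I would construct the base cycle $C^*$ by replacing each short arc of $C$ between a bridge's feet by a virtual edge; the absorbed bridge becomes a summand in $\mathcal{L}_{8t^2}$ 2-summed to an edge of $C^*$, placing $G\in C(\mathcal{L}_{8t^2})$.

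\emph{Main obstacle.} Step (a) is the most delicate. Given a bridge $B$ with feet $a,b,c$ and arcs of $C$ of lengths $\alpha,\beta,\delta$ summing to $|C|$, I would extract a Y-tree $T_Y\subseteq B$ with center $x$ and internally disjoint branches to $a,b,c$. If for some pair of feet both arcs of $C$ have length $\ge t$, combining them with the $T_Y$-path through $x$ immediately yields a $\theta_{1,t,t}$. The hard subcase is the \emph{unbalanced} configuration, where a single arc exceeds $|C|-t$; then the maximality of $C$ forces all pairwise bridge-paths between $\{a,b,c\}$ to be short (else replacing an arc of $C$ by a longer bridge-path would contradict maximality), and I would locate a $\theta_{1,t,t}$ using pairs involving the Y-center, such as $(x,a)$ with the three internally disjoint paths $x\to a$ through $T_Y$, $x\to b\to a$ via the long arc, and $x\to c\to a$ via the short arc; the middle one is long automatically, and making a second one long will require further case work, possibly exploiting an interior vertex of $B$ in place of $x$. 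Steps (b) and (c) are more routine: (b) follows because both long arcs together with any bridge-path give $\theta_{1,t,t}$; for (c), Lemma~\ref{lem:lemmaa} applied inside the summand with a suitable parameter produces two internally disjoint long $uv$-paths in the bridge, and combining these with the long arc of $C$ in $G$ gives three internally disjoint $uv$-paths of which two have length $\ge t$.
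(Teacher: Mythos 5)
The converse direction is essentially the paper's own argument and is fine.

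The forward direction, however, has a genuine gap, and it is exactly at the step you flag as the ``main obstacle'': claim (a) is simply false, so no amount of further case analysis will rescue it. Concretely, let $C^*$ be a very long cycle with an edge $x_1y_1$, and let $H_1$ be the graph consisting of the edge $x_1y_1$, a path $x_1u_1u_2\cdots u_ky_1$ with $k+3<t$, and one extra vertex $z$ adjacent to $x_1$, $u_j$, and $y_1$ for some $j\ge 3$. Then $H_1$ is $2$-connected with no path of length $\ge t$, so the $2$-sum $G$ of $C^*$ and $H_1$ over $x_1y_1$ lies in $C(\mathcal L_t)$ and is therefore $\theta_{1,t,t}$-free, yet $\ell(G)$ is as large as you like. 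A longest cycle of $G$ is $C^*$ with $x_1y_1$ replaced by the $u$-path, and $z$ is then a $C$-bridge with \emph{three} feet $x_1,u_j,y_1$ in precisely your ``unbalanced'' configuration (two short arcs, one long arc). So the unbalanced configuration does not force a $\theta_{1,t,t}$, and bridges with $\ge 3$ feet genuinely occur. What $\theta_{1,t,t}$-freeness actually gives is only the weaker statement that every two feet of a bridge are joined by an arc of length $<t$, hence (since $|C|$ is large) all feet of each bridge lie on a subpath of $C$ of length $<2t$. To finish from there you must group bridges whose foot-intervals overlap in edges, show that such a cluster cannot cascade around a long stretch of $C$ (otherwise crossing chords of ``span'' $\ge 2$ accumulate and do produce a $\theta_{1,t,t}$, but this needs an argument), and exhibit a $2$-cut isolating each cluster. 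That residual work is essentially the whole difficulty of the theorem; the paper avoids the longest-cycle/bridge route altogether and instead uses a chain decomposition at an arbitrary edge (Lemmas \ref{lem:Sop}, \ref{lem:aGebound}, \ref{lem:Sdecompbd}): if the chain is short then $\ell(G)$ is bounded, and if it is long then the absence of a ``crossing'' path from the $x$-side to the $y$-side in the middle pieces forces each middle piece to split into two components, which yields the cycle structure directly. Your steps (b) and (c) are fine in spirit (for (c) the cleanest route is that a long path in a bridge gives, via Lemma \ref{lem:lemmaa}, a long $uv$-path in the bridge contradicting the maximality of $C$, or else the paper's device of a long cycle inside the bridge), but without a correct replacement for (a) the proof does not go through.
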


\begin{proof} Let $w(t)=f_{\ref{lem:4.2.9}}(2t)$. We show that $f_{\ref{thm:1tt}}(t)=[w(t)+2]^{3t}w(t)$ satisfies the theorem. 
Suppose $\ell(G) \geq f_{\ref{thm:1tt}}(t)$ and further assume $G \notin C(\mathcal{L}_{8t^2})$. We need to show that $G$ contains $\theta_{1,t,t}$.  

Let $b=\text{max}\{\ell(G'): G' \text{ is a 3-connected minor of } G\}$; we know $b < w(t)$ since otherwise, by Lemma \ref{lem:4.2.9}, $G$ contains a $W_{2t}$ minor and hence a $\theta_{1,t,t}$. Let $e$ be a specified edge of $G$ and consider a chain decomposition of $G$ given by $G_0,G_1,\dots,G_n$ and with vertices $x_0,x_1,\dots, x_n, y_0, y_1, \dots, y_n, z$, as in Figure~\ref{fig:bpath}. If $a(G,e) < 3t$, then by Lemma \ref{lem:aGebound}, $G$ can be constructed from $3$-connected minors and graphs of order $\le3$ by at most $3t$ iterations of operation $S$. By Lemma~\ref{lem:Sdecompbd}, $\ell(G) \leq (b+2)^{3t} b < [w(t)+2]^{3t}w(t)$ which is a contradiction. 

Hence assume $a(G,e)=n \geq 3t$. Since $G$ is 2-connected, it has two independent paths from $z$ to $x_0$ and $y_0$. Without loss of generality, we assume one contains every $x_i$ and the other contains every $y_i$. 
Suppose $G_{t}\cup G_{t+1} \cup \dots \cup G_{n-t}$ contains a path $P$ from some $x_i$ to some $y_j$ and without loss of generality, assume $P$ does not include any other $x_k$ or $y_k$. Then there is a $\theta_{1,t,t}$ in $G$ at $x_i$ and $y_j$: the path of length $\ge1$ is $P$, one path of length $\ge t$ includes the vertices $x_{i-1},x_{i-2},\dots,x_0,y_0,y_1,\dots,y_{j-1}$ and the other includes the vertices $x_{i+1},x_{i+2},\dots,x_n,z,y_n,y_{n-1},\dots,y_{j+1}$. Hence no such path $P$ exists. It follows that each $G_i$ ($t\le i\le n-t$) has two components $G_i',G_i''$ such that $G_i'$ contains both $x_i,x_{i+1}$ and $G_i''$ contains both $y_i,y_{i+1}$. Therefore, $G$ can be constructed by 2-summing 2-connected graphs $H_1,...,H_k$ to a cycle $H_0$ of length $k>t$.  We choose these graphs with $k$ maximum.

Because $G \notin C(\mathcal{L}_{8t^2})$, $\ell(H_i) \ge 8t^2$ for some $i$. Let $xy$ be the summing edge of $H_i$. By the maximality of $k$, $H_i\backslash xy$ is 2-connected. Clearly, $\ell(H_i\backslash xy)\ge 4t^2$. Thus by Lemma \ref{lem:lemmaa}, $H_i\backslash xy$ has a cycle $C$ of length exceeding $2t$. Since $H_i$ is $2$-connected, it has disjoint paths from $x$ to a vertex $x'$ of $C$ and from $y$ to a vertex $y'$ of $C$ (where possibly $x=x'$ or $y=y'$). Now the 2-sum of $H_i$ and $H_0$ contains a $\theta_{1,t,t}$ graph at $x'$ and $y'$: $C[x',y']$ and $C[y',x']$ are paths of length $\geq t$ and $\ge1$, and the other path of length $\ge t$ is the union of the $xx'$-path, the $yy'$-path, and $H_0\backslash xy$. Consequently, $G$ contains $\theta_{1,t,t}$.

Finally, let $G \in C(\mathcal{L}_t)$. Suppose $G$ is formed by $2$-summing graphs $G_1,\dots,G_k$ to a cycle $C$. Suppose $G$ has a $\theta_{1,t,t}$ graph at $x$ and $y$. If $x \in V(G_i) \backslash V(C)$ for some $i$ then $y$ must also be in $V(G_i)$ because otherwise there could not be three independent paths from $x$ to $y$ since $G_i$ is separated from the rest of the graph by two vertices. But now at least one of the paths of length $\ge t$ would have to remain in $G_i$ which cannot happen since $\ell(G_i) <t$. Hence $x$ and $y$ must both be vertices of $C$. Because no $G_i$ has a path of length $\ge t$, the two paths of length $\ge t$ in any $\theta_{1,t,t}$ must each have an interior vertex in $C$. But now, no matter how these vertices are oriented with respect to $x$ and $y$ along $C$, there cannot be a $\theta_{1,t,t}$. \end{proof}  

The proof of the characterization of $\theta_{2,t,t}$-free graphs requires the following lemma. Let $G$ be a graph and let $e,f\in E(G)$. A subgraph $H$ of $G$ is called an $ef$-{\it theta} if $H$ is a theta graph such that, if $u,v$ are its two cubic vertices then $e,f$ belong to different $uv$-paths of $H$ and the third $uv$-path of $H$ has length $\ge2$. 
Suppose $e=xy$, $f=uv$, and $Z\subseteq V(G)$. Then we say $Z$ {\it separates} $e$ from $f$ if $\{x,y\} \backslash Z\ne\emptyset$, $ \{u,v\} \backslash Z \neq \emptyset$, and $G-Z$ has no path between $\{x,y\} \backslash Z$ and $\{u,v\} \backslash Z$. 

\begin{lemma} Let $e,f$ be distinct edges of a $2$-connected simple graph $G$. Suppose no two vertices of $G$ separate $e$ from $f$. Then $G$ contains an $ef$-theta unless either $e,f$ have a common end $v$ with $\text{deg}_G(v)=2$ or $e,f$ have no common end and $G=K_4$. 
\label{lem:eftheta}
\end{lemma}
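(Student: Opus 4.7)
The plan is to split into two cases based on whether $e$ and $f$ share a common end.

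\emph{Case 1: $e$ and $f$ share an end.} Write $e=xy$ and $f=yv$. We are not in the first exception, so $\deg_G(y)\ge 3$ and we may pick a neighbor $z$ of $y$ with $z\notin\{x,v\}$. The hypothesis specializes in this setting to the statement that no vertex $z'\in V(G)\setminus\{x,y,v\}$ separates $x$ from $v$ in $G-y$, because any such $z'$ would make $\{y,z'\}$ a 2-cut of $G$ separating $e$ from $f$. In particular $z$ does not separate, so $G-y$ contains an $xv$-path $Q$ avoiding $z$. Extend $Q$ to a spanning tree $T$ of $G-y$, so that $T[x,v]=Q$ avoids $z$, and let $b$ be the Steiner point of $\{x,v,z\}$ in $T$; then $b\ne z$. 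The three internally disjoint tree paths $T[x,b]$, $T[v,b]$, $T[z,b]$ together with the edges $e,f,yz$ form a theta with cubic vertices $y$ and $b$: the path through $x$ contains $e$, the path through $v$ contains $f$, and the path through $z$ has length $1+|T[z,b]|\ge 2$, yielding the required $ef$-theta.

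\emph{Case 2: $e$ and $f$ are disjoint.} Write $e=xy$, $f=uv$, and assume $G\ne K_4$. Contracting $e$ and $f$ to single vertices $e^*,f^*$ yields $G^*$, and any $(e^*,f^*)$-vertex cut of size at most $2$ in $G^*\setminus\{e^*,f^*\}$ would, after padding with an extra vertex if it has size $1$, become a $2$-set in $V(G)\setminus\{x,y,u,v\}$ separating $e$ from $f$ — forbidden by hypothesis; the small orders $|V(G)|\le 5$ are handled by direct inspection. Menger's theorem then produces three internally disjoint paths $R_1,R_2,R_3$ in $G$ from $\{x,y\}$ to $\{u,v\}$ with internal vertices outside $\{x,y,u,v\}$, and a short rerouting argument lets us assume each of $x,y,u,v$ is used as an endpoint of some $R_i$. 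Up to the symmetries $x\leftrightarrow y$ and $u\leftrightarrow v$, the endpoint pattern is either (i) $R_1:x\to u$, $R_2:x\to v$, $R_3:y\to u$, or (ii) $R_1,R_2:x\to u$ and $R_3:y\to v$. In pattern (i), the three internally disjoint $x$-$u$ paths $R_1$, $R_2+f$, and $e+R_3$ form a theta in which $e$ and $f$ lie on different paths; provided $|R_1|\ge 2$, this is an $ef$-theta. Pattern (ii) admits an analogous construction using the cycle $R_1\cup R_2$ together with the auxiliary path $e+R_3+f$.

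The hard part is the degenerate sub-case of pattern (i) when $|R_1|=1$, i.e., when $xu$ is already an edge of $G$ and the third theta-path above has length $1$. I would then switch to cubic vertices $\{y,v\}$, using $e+R_2$ and $R_3+f$ as the legs containing $e$ and $f$, and look for a third $y$-$v$ path of length $\ge 2$ internally disjoint from these — avoiding $\{x,u\}\cup\mathrm{int}(R_2)\cup\mathrm{int}(R_3)$. When no such path exists, a careful analysis pushes the obstruction back to produce a 2-cut of $G$ separating $e$ from $f$ (contradicting the hypothesis), unless $|R_1|=|R_2|=|R_3|=1$ and $V(G)=\{x,y,u,v\}$, forcing $G=K_4$ — our excluded exception. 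Together with the analogous degeneracies in pattern (ii), closing this case is the technical core of the proof.
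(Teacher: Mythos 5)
Your Case 1 is correct and is essentially the paper's argument for the common-end case (the paper takes a $bd$-path in $G-\{a,x\}$ plus an attachment path rather than a spanning tree, but the resulting theta is the same). Case 2, however, contains a genuine error and a genuine gap. The error: your construction for pattern (ii) does not yield an $ef$-theta. The theta built from the cycle $R_1\cup R_2$ and the path $e+R_3+f$ has both $e$ and $f$ on the \emph{same} $xu$-path, while the definition requires $e$ and $f$ to lie on \emph{different} paths between the two cubic vertices. There is no ``analogous construction'' here: two $x$-$u$ paths together with one $y$-$v$ path is exactly the configuration a cycle through $e$ and $f$ already gives you, and it is where the work starts, not where it ends.

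The gap: the degenerate sub-case of pattern (i) (and its analogue in pattern (ii)) is precisely where the $K_4$ exception arises and where essentially all of the content of the lemma lives, and you have not proved it --- ``a careful analysis pushes the obstruction back to produce a 2-cut'' describes what a proof would accomplish rather than supplying one. Note moreover that the separating pairs you must invoke at that stage necessarily meet $\{x,y,u,v\}$ (for instance, in $K_4$ minus the edge $yv$ the pair $\{x,u\}$ separates $e$ from $f$), so the weak consequence of the hypothesis you extracted for Menger (cuts disjoint from $\{x,y,u,v\}$) cannot close the argument. The paper resolves all of this by fixing a cycle $C$ through $e$ and $f$, showing that any transition between the two arcs of $C\setminus\{e,f\}$ must be a single chord $pq$ (a longer $C$-path immediately yields an $ef$-theta), choosing $pq$ extremal, and then repeatedly applying the no-two-vertex-cut hypothesis to pairs such as $\{p,q\}$ and $\{p,q'\}$ to force the chord structure into $K_4$. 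Your unproved rerouting claim (that all four of $x,y,u,v$ can be made endpoints of the $R_i$) is a further loose end. As written, Case 2 is not a proof.
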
  

\begin{proof} Let $e=ab$ and $f=cd$. First consider the case $a=c$. Suppose $\text{deg}(a) \geq 3$ and let $x\in N_G(a) \backslash \{b,d\}$. Since $\{a,x\}$ does not separate $e,f$, there is a path $P$ from $b$ to $d$ in $G-\{a,x\}$. Furthermore, since $G$ is $2$-connected, there is a path $Q$ from $x$ to $P$ in $G-a$. Then the union of $P,Q,e,f$, and $ax$ is an $ef$-theta, as required.

Now $e,f$ is a matching. Assume $G$ does not contain an $ef$-theta. We will show $G=K_4$. Because $G$ is $2$-connected, it has a cycle $C$ containing $e,f$. Let $P,Q$ be the two paths of $C\backslash\{e,f\}$. Without loss of generality, assume $P$ is between $a$ and $c$ and $Q$ is between $b$ and $d$. Since $\{b,c\}$ does not separate $e,f$, there is an edge $pq$ with $p \in P-c$ and $q \in Q-b$. Choose such an edge $pq$ with $p$ as close to $a$ as possible along $P$. Since $\{p,q\}$ does not separate $e,f$, there is a path $R$ in $G-\{p,q\}$ between the two components of $C-\{p,q\}$. If the ends of $R$ are both on $P$ or both on $Q$, then the union of $R,C$, and $pq$ contains an $ef$-theta. So one end $p'$ of $R$ is on $P$ and the other end $q'$ of $R$ is on $Q$. It follows that $R$ has only one edge $p'q'$, $p'$ is between $p$ and $c$ along $P$ (by the choice of $p$), and $q'$ is between $b$ and $q$ along $Q$. 

If $pp' \notin E(P)$, then there is an $ef$-theta with $p$ and $p'$ as the two degree 3 vertices. Hence $pp' \in E(P)$ and symmetrically $qq' \in E(Q)$. If $p \neq a$ or $q' \neq b$, then since $\{p,q'\}$ does not separate $e,f$, there is a path in $G-\{p,q'\}$ between the two components of $C-\{p,q'\}$. The ends of this path could be both on $P$ or both on $Q$ or one on each of $P$ and $Q$. In all cases it is routine to check that this path results in an $ef$-theta. Thus we must have $p=a$ and $q'=b$ and similarly $p'=c$ and $q=d$ so $e,f$ are contained in a $K_4$ subgraph of $G$. If $G \neq K_4$, then $G$ has a vertex $x$ not in the $K_4$ subgraph. $G$ is $2$-connected so $G$ has two independent paths from $x$ to distinct vertices of $K_4$ and again we can find an $ef$-theta. Hence the result follows.  \end{proof}

To describe $\theta_{2,t,t}$-free graphs, we use nearly outerplanar graphs. A simple graph $G$ is \textit{nearly outerplanar} if $G$ has a Hamilton cycle $C$ such that every chord crosses at most one other chord and, in addition, if two chords $ab$ and $cd$ do cross, then either both $a,c$ and $b,d$ are adjacent in $C$ or both $a,d$ and $b,c$ are adjacent in $C$. An edge of $C$ is \textit{free} if it does not belong to a $4$-cycle spanned by two crossing chords. A general graph $G$ is {\it nearly outerplanar} if $si(G)$ is nearly outerplanar, and {\it free} edges of $G$ are those that are parallel to a free edge of $si(G)$. For any positive integer $n$, let $\mathcal{O}_n$ be the class of graphs formed by $2$-summing graphs from $\mathcal{L}_n$ to free edges of nearly outerplanar graphs. Note $C(\mathcal{L}_n) \subset \mathcal{O}_n$.  

\begin{theorem} There exist two functions $f_{\ref{thm:2tt}}(t)$ and $g_{\ref{thm:2tt}}(t)$ such that every $2$-connected graph $G$ with $\ell(G) \geq f_{\ref{thm:2tt}}(t)$ either contains $\theta_{2,t,t}$ or is in $\mathcal{O}_{g_{\ref{thm:2tt}}(t)}$, where $t \geq 3$ is an integer. Additionally, all graphs in $\mathcal{O}_t$ are $\theta_{2,t,t}$-free.
\label{thm:2tt}
\end{theorem}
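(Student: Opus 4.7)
The approach mirrors the proofs of Theorems~\ref{thm:22t} and~\ref{thm:1tt}, with Lemma~\ref{lem:eftheta} as the new structural tool. The easy direction---that every $G \in \mathcal{O}_t$ is $\theta_{2,t,t}$-free---goes as follows. Suppose $G$ is built by $2$-summing graphs $G_1,\dots,G_k \in \mathcal{L}_t$ to free edges of a nearly outerplanar graph $H$ with Hamilton cycle $C$, and suppose $G$ contains a $\theta_{2,t,t}$ at vertices $u,v$. Each summand $G_i$ is separated from the rest of $G$ by its two feet and satisfies $\ell(G_i)<t$, so no path of length $\ge t$ can pass through the interior of a single $G_i$, and after contracting each $G_i$ to its summing edge one obtains a $\theta_{2,t,t}$ in $H$. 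Since $H$ is simple and its chords cross only in the restricted ``free-edge $4$-cycle'' pattern, analyzing cases on where the two long paths of the theta travel along $C$ shows they cannot both have length $\ge t$ while remaining independent of a third path of length $\ge 2$.

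For the forward direction, assume $\ell(G)$ is sufficiently large; we must show $G \in \mathcal{O}_{g_{\ref{thm:2tt}}(t)}$. As in Theorem~\ref{thm:1tt}, Lemma~\ref{lem:4.2.9} forces every $3$-connected minor of $G$ to have path-length at most some $b < f_{\ref{lem:4.2.9}}(2t)$ (otherwise a $W_{2t}$ minor trivially produces a $\theta_{2,t,t}$). Combining Lemmas~\ref{lem:aGebound} and~\ref{lem:Sdecompbd}, a choice of $f_{\ref{thm:2tt}}(t)$ growing like $[b+2]^{Ct}b$ guarantees $a(G,e) \ge Ct$ for a suitable constant $C$ and a specified edge $e$. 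Fix a chain decomposition $G_0,G_1,\dots,G_n$ with $n\ge Ct$; for any ``deep'' internal block $G_i$, any $G_i$-path running from the $x$-side of its cut $\{x_i,y_i\}$ to the $y$-side combines with the long $x$- and $y$-arcs of the chain to form a $\theta_{1,t,t}$, and hence a $\theta_{2,t,t}$. So each deep internal block splits across its cut, and as in the proof of Theorem~\ref{thm:1tt} the chain collapses to a $2$-sum of $2$-connected summands onto a long cycle $H_0$. Each summand $H_i$ attached to an edge of $H_0$ either has bounded path length (at most some $g_{\ref{thm:2tt}}(t)$, depending on $b$ and $t$) or, by Lemma~\ref{lem:lemmaa}, contains a cycle of length $>2t$ whose union with $H_0$ yields a $\theta_{2,t,t}$.

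The main obstacle, absent in Theorem~\ref{thm:1tt}, is controlling chords: because a $\theta_{2,t,t}$ only demands one short independent path, crossing chords can legitimately appear, and we must prove the exact nearly outerplanar crossing pattern. Let $H$ denote the $3$-connected simplification obtained from the cycle $H_0$ after also absorbing the $3$-connected summands of order $\ge 3$ treated as ``chord-contributing'' $2$-sums; equivalently, we work with $si(G)$ modulo the small summands. For any two chords $e,f$ of the Hamilton cycle $C$ of $H$ such that no $2$-cut separates $e$ from $f$, Lemma~\ref{lem:eftheta} produces an $ef$-theta unless $e,f$ share a degree-$2$ end or span a $K_4$. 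Combining such an $ef$-theta with the two long arcs of $C$ between the feet of $e,f$ gives a $\theta_{2,t,t}$ whenever those arcs are long enough, which (after pigeonholing around $C$ of length $\ge |C|>$ some polynomial in $t$) must happen. So any two chords either are separated by a $2$-cut (in which case the $2$-cut corresponds to another $2$-sum and is absorbed into the structure) or they form the specific $K_4$ pattern of two crossing chords whose endpoints are pairwise $C$-adjacent---precisely the nearly outerplanar configuration. A careful induction on the number of chords, combined with the fact that each non-free edge of $si(G)$ lies inside such a $K_4$ and therefore carries no large $2$-summed piece, shows that each summand of path length $\ge t$ is attached only to a free edge. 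This places $G$ in $\mathcal{O}_{g_{\ref{thm:2tt}}(t)}$ for a suitable choice of $g_{\ref{thm:2tt}}$, completing the proof.
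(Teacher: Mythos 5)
Your overall strategy --- chain decomposition plus Lemma~\ref{lem:eftheta} --- is the right one and is the same as the paper's, but there is a genuine gap at the step where you extract structure from the chain. You write that a path across a deep internal block $G_i$ yields ``a $\theta_{1,t,t}$, and hence a $\theta_{2,t,t}$,'' and conclude that each such block splits across its cut so that the chain collapses onto a cycle. This is false: $\theta_{1,t,t}$ does not contain $\theta_{2,t,t}$ (its middle path is a single edge), and a block may carry a single edge (a chord) across its cut without creating any $\theta_{2,t,t}$ --- this is exactly why nearly outerplanar graphs with crossing chords appear in the answer. Your third paragraph acknowledges this, but the repair does not work as described: you apply Lemma~\ref{lem:eftheta} to pairs of actual chords $e,f$ of the final Hamilton cycle and then ``combine the $ef$-theta with the two long arcs of $C$.'' The $ef$-theta produced by the lemma consists of three arbitrary paths in $H$; nothing forces the two paths through $e$ and $f$ to run along $C$ or to be extendable to length $\geq t$, so the existence of an $ef$-theta does not by itself yield a $\theta_{2,t,t}$, and the pigeonholing you invoke is not available.

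The paper instead applies Lemma~\ref{lem:eftheta} \emph{inside each piece} $G_i^+$ of the chain decomposition ($t\le i\le n-t$), taking $e_i=x_iy_i$ and $f_i=x_{i+1}y_{i+1}$ to be \emph{virtual} edges representing the two sides of the chain. An $e_if_i$-theta, after the virtual edges are deleted, hooks directly into a global cycle $C^*$ through the whole chain: the two halves of the chain supply the two paths of length $\ge t$, and the third path of the theta supplies the path of length $\ge 2$. The exceptional outcomes of Lemma~\ref{lem:eftheta} (a common degree-2 end, or $si(G_i^+)=K_4$) are precisely what generate the nearly outerplanar crossing pattern. Two further points are asserted rather than proved in your proposal. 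First, the claim that summands of path length $\ge t$ attach only to free edges: the paper obtains this by choosing the Hamilton cycle $C$ as long as possible and applying Lemma~\ref{lem:eftheta} a second time inside each summand, so that a long summand contradicts the maximality of $C$ (your alternative via Lemma~\ref{lem:lemmaa} again only guarantees a $\theta_{1,t,t}$ when the two feet land on adjacent vertices of the long cycle). Second, the converse direction: ``analyzing cases shows they cannot both have length $\ge t$'' is not an argument; the paper's cleaner route is to note that $\mathcal O_t$ is closed under taking 2-connected minors and that $\theta_{2,t,t}$ itself, having no 4-cycle and not being outerplanar, cannot lie in $\mathcal O_t$.
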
 

\begin{proof} As in the proof of Theorem~\ref{thm:1tt}, let $w(t)=f_{\ref{lem:4.2.9}}(2t)$. We will show $f_{\ref{thm:2tt}}(t)=[w(t)+2]^{3t}w(t)$ and $g_{\ref{thm:2tt}}(t)=[w(t)+2]^{t}w(t)$ satisfy the theorem.  
Suppose $\ell(G) \geq f_{\ref{thm:2tt}}(t)$ and further assume $G$ does not contain $\theta_{2,t,t}$. We need to show $G \in \mathcal{O}_{g_{\ref{thm:2tt}}(t)}$.

Let $b=\text{max}\{\ell(G'): G' \text{ is a 3-connected minor of } G\}$; we know $b < w(t)$ since otherwise $G$ contains $\theta_{2,t,t}$. Let $e^*$ be a specified edge of $G$ and consider a chain decomposition of $G$ given by $G_0,G_1, \dots, G_n$ and with vertices $x_0,x_1,\dots,x_n,y_0,y_1,\dots, y_n,z$ as in Figure~\ref{fig:bpath}. If $a(G,e^*) < 3t$, then by Lemma \ref{lem:aGebound}, $G$ can be constructed from its $3$-connected minors and graphs of order $\le 3$ in at most $3t$ iterations of operation $S$. By Lemma~\ref{lem:Sdecompbd}, $\ell(G) \leq (b+2)^{3t} b < [w(t)+2]^{3t}w(t)=f_{\ref{thm:2tt}}(t)$ which is a contradiction. 

Hence assume $a(G,e^*)=n \geq 3t$. Since $G$ is 2-connected, it has a cycle $C^*$ containing $e^*$ and $z$. For each $i \in \{t,t+1,\dots,n-t\}$, let $G_i^+$ be obtained from $G_i$ by adding a new edge $e_i$ between  $x_i,y_i$ and a new edge $f_i$ between $x_{i+1},y_{i+1}$. Then $G_i^+$ is $2$-connected and has no $2$-cut separating $e_i$ from $f_i$ since otherwise we could find a chain decomposition of $G$ with $a(G,e^*) >n$. If $G_i^+$ contains an $e_if_i$-theta $T$, then $C^*\cup (T\backslash \{e_i,f_i\})$ contains a $\theta_{2,t,t}$. Thus by Lemma~\ref{lem:eftheta}, either $e_i,f_i$ have a common end and that end has only two neighbors in $G_i^+$ or $e_i,f_i$ have no common end and $si(G_i^+)=K_4$. We conclude that there exists a nearly outerplanar graph $H$ such that its Hamilton cycle $C$ has length exceeding $t$, and $G$ is obtained from $H$ by 2-summing minors of $G$ to free edges of $C$.

Choose $H$ such that $C$ is as long as possible. Let $G_e$ be a graph $2$-summed to a free edge $e$ of $H$ over an edge $e'$ of $G_e$. In order to conclude $G \in \mathcal{O}_{g_{\ref{thm:2tt}}(t)}$, it suffices to show $\ell(G_e) < g_{\ref{thm:2tt}}(t)$. Suppose $a(G_e,e') = m \ge t$ and let $H_0,H_1,\dots,H_m$ be the corresponding chain decomposition. Let $u,v$ be the two common vertices of $H_0$ and $H_1$. Let $H_0^+$ be obtained from $H_0$ by adding a new edge $f'=uv$. If $H_0^+$ contains an $e'f'$-theta, then $G$ contains $\theta_{2,t,t}$ where one long path goes through $C$ and the other long path goes through $H_1\cup ... \cup H_m$. Thus by Lemma~\ref{lem:eftheta}, either $e',f'$ have a common end and that end has only two neighbors in $H_0^+$ or $e',f'$ have no common end and $si(H_0^+)=K_4$. Each of these two cases contradicts the maximality of $C$. Hence $a(G_e,e') < t$. It follows from Lemmas~\ref{lem:Sdecompbd} and~\ref{lem:aGebound} that $\ell(G_e) \le [b+ 2]^t b < [w(t)+2]^{t}w(t) = g_{\ref{thm:2tt}}(t)$.    

Finally, we prove every $G \in \mathcal{O}_t$ is $\theta_{2,t,t}$-free ($t\ge3$). 
Note every 2-connected minor of every graph in $\mathcal L_t$ remains in $\mathcal L_t$. Similarly, every 2-connected minor of every nearly outerplanar graph remains nearly outerplanar (and free edges remain free). It follows that every 2-connected minor of every graph in $\mathcal O_t$ remains in $\mathcal O_t$. Therefore, to prove every $G\in\mathcal O_t$ is $\theta_{2,t,t}$-free we only need to show $\theta_{2,t,t} \not \in \mathcal O_t$. Suppose otherwise that $\theta_{2,t,t}$ can be formed from a nearly outerplanar graph $H$ by 2-summing $k\ge0$ graphs $H_1,...,H_k\in \mathcal L_t$ to free edges of $H$. Since $\theta_{2,t,t}$ has no 4-cycle, $H$ cannot contain crossing chords and thus $H$ is outerplanar. Let $C$ be the facial Hamilton cycle of $H$ and let $x,y$ be the two cubic vertices of $\theta_{2,t,t}$. Suppose $x \in V(H_i) \setminus V(C)$ for some $i >0$. Then $y \in V(H_i)$ as well because $H_i$ is separated from the rest of the graph by a 2-cut. But now $H_i$ must contain an $xy$-path of length $t$, which contradicts the assumption $H_i\in\mathcal{L}_t$ Therefore, each $H_i$ is a cycle and 2-summing it to $H$ amounts to replacing an edge of $C$ by a path. What this means is that we may consider $H_i$ as part of $C$ in the first place. In other words, we may assume $k=0$. It follows that $\theta_{2,t,t}=H$, which is impossible since $\theta_{2,t,t}$ is not outerplanar. This contradiction completes our proof.
\end{proof}

\section{Planar drawings versus crossing paths}

An important step in proving our main result is to determine if a graph admits a planar drawing with certain vertices and edges on a facial cycle. This problem is essentially solved by Robertson and Seymour in \cite{RoSe90}. However, their result is not strong enough for our application. In the following we first state two results from \cite{RoSe90} and then we prove a refinement of these results. 

Let $C$ be a cycle of $G$. Let $u,v$ be distinct vertices of $G-V(C)$ and let $P_1,P_2,P_3$ be independent $uv$-paths. Then $(P_1,P_2,P_3)$ is a \textit{tripod} of $G$ with respect to $C$ if $G-\{u,v\}$ has three disjoint paths $Q_1,Q_2,Q_3$, where $Q_i$ is from a vertex $s_i$ on $P_i-\{u,v\}$ to a vertex $t_i$ on $C$,  such that either $V(P_i\cap C)=\emptyset$ or  $V(P_i\cap C)=\{s_i\}=\{t_i\}$. The paths $Q_1,Q_2,Q_3$ are {\it legs} and the vertices $t_1,t_2,t_3$ are the \textit{feet} of the tripod. A \textit{cross} of $C$ is a pair of disjoint $C$-paths, one with ends $u,v$ and one with ends $x, y$, such that $u,x,v,y$ appear in that order around $C$. We use the following two lemmas by Robertson and Seymour which we have rephrased using our terminology.

\begin{lemma} [Lemma (2.3) of \cite{RoSe90}] Let $C$ be a cycle of a graph $G$ and let $(P_1,P_2,P_3)$ be a tripod with respect to $C$. If $|C|\ge4$ then either $G$ has a cross with respect to $C$ or $G$ has a $k$-separation $(G_1,G_2)$ with $k\le3$, $V(C)\subseteq V(G_1)$, and $V(P_1 \cup P_2 \cup P_3)\subseteq V(G_2)$. 
\label{lem:2.3RS} 
\end{lemma}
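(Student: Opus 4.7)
Write $T=P_1\cup P_2\cup P_3\cup Q_1\cup Q_2\cup Q_3$ and $F=\{t_1,t_2,t_3\}$. By the definition of a tripod, $V(T)\cap V(C)\subseteq F$, and the legs $Q_1,Q_2,Q_3$ form three pairwise vertex-disjoint $V(T)$--$V(C)$ paths. The plan is to apply Menger's theorem to the pair $(V(T),V(C))$ in $G$ and convert its two alternatives into the two conclusions.

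If some $Z\subseteq V(G)$ with $|Z|\le 3$ meets every $V(T)$--$V(C)$ path, let $A$ be the set of vertices reachable from $V(C)\setminus Z$ in $G-Z$; then $(G_1,G_2)=(G[A\cup Z],\,G[V(G)\setminus A])$ is a separation with $V(C)\subseteq V(G_1)$, $V(T)\subseteq V(G_2)$, and $V(G_1\cap G_2)=Z$, which is the desired $k$-separation. (One checks that any edge leaving $A$ must end in $Z$, so $(G_1,G_2)$ really is a separation, and that $V(T)\cap A=\emptyset$ because $Z$ separates the two sides.)

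Otherwise Menger yields four pairwise vertex-disjoint $V(T)$--$V(C)$ paths. A standard augmenting-path exchange lets me assume three of them are the legs $Q_1,Q_2,Q_3$, so the fourth path $R$ is disjoint from the legs and, after shortcutting, internally disjoint from $T\cup C$, with endpoints $p\in V(P_1\cup P_2\cup P_3)\setminus\{s_1,s_2,s_3\}$ and $q\in V(C)\setminus F$; the hypothesis $|C|\ge 4$ guarantees such a $q$ exists. Let $C[t_i,t_j]$ be the arc of $C$ cut out by $F$ that contains $q$ and set $\{k\}=\{1,2,3\}\setminus\{i,j\}$. In the cyclic order $t_i,q,t_j,t_k$ on $C$ the pairs $\{q,t_k\}$ and $\{t_i,t_j\}$ alternate, so it suffices to exhibit vertex-disjoint $C$-paths $A_1$ from $q$ to $t_k$ and $A_2$ from $t_i$ to $t_j$ inside $T\cup R$. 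I would take $A_1$ to be $R$ followed by a subpath of $P_\ell\cup P_k\cup Q_k$ from $p$ through one of $u,v$ to $t_k$ (where $P_\ell$ is the tripod path containing $p$), and $A_2$ to be $Q_i$ followed by $P_i$ through the other of $u,v$, then $P_j$ and $Q_j$ to $t_j$.

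The main obstacle is verifying that $A_1$ and $A_2$ can be realized disjointly for every location of $p$. The three independent $uv$-paths supply exactly the needed redundancy: if $p$ lies on $P_k$ the two routings use disjoint sets of $P$'s; if $p$ lies on $P_i$ (the $P_j$ case being symmetric) one chooses the direction along $P_i$ for $A_1$ according to whether $p$ lies on the $u$-side or the $v$-side of $s_i$, ensuring that the $P_i$-segment used by $A_2$ avoids $p$. The fact that after augmentation $p\notin\{s_1,s_2,s_3\}$ is essential here, since it prevents any $s_\ell$ from being forced into both paths.
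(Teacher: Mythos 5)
You should first note that the paper does not prove this statement at all: it is quoted verbatim (``Lemma (2.3) of \cite{RoSe90}'') from Robertson and Seymour's Graph Minors IX, so there is no in-paper proof to compare against and your attempt has to stand on its own. The separation half of your argument is fine (with $|C|\ge4>|Z|$ and $|V(T)|>|Z|$ guaranteeing the separation is proper), and your cross construction is correct \emph{once} you are in the situation ``three intact legs plus one extra path $R$ attached at a new point $p$ of the theta graph.'' The genuine gap is the reduction to that situation, namely the sentence ``a standard augmenting-path exchange lets me assume three of them are the legs $Q_1,Q_2,Q_3$.'' This is not a standard fact and it fails in both of the ways your Menger application could be read. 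If the source set is $V(T)$ (legs included), then the legs are not nontrivial paths of the system at all --- each $t_i$ already lies in $V(T)\cap V(C)$, so the legs contribute only the trivial paths $\{t_i\}$ --- and the fourth path may attach to the \emph{interior of a leg}. In that case your claim $p\in V(P_1\cup P_2\cup P_3)\setminus\{s_1,s_2,s_3\}$ is false, and the configuration can genuinely fail to contain a cross: if $p\in Q_1$ and $q$ lies on the arc of $C$ between $t_1$ and $t_2$ avoiding $t_3$, the only alternating pairing is $\{t_1,t_2\}$ versus $\{q,t_3\}$, and any $t_1$--$t_2$ path in $T\cup R$ must traverse all of $Q_1$ and hence pass through $p$, which every $q$--$t_3$ path also needs. (The lemma survives because its separation conclusion only requires cutting off $V(P_1\cup P_2\cup P_3)$, not $V(T)$ --- but your Menger dichotomy does not produce that separation in this case.)

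If instead you apply Menger between $V(P_1\cup P_2\cup P_3)$ and $V(C)$, then the fourth foot does land on the theta graph, but now you cannot keep the legs: augmenting from $\{Q_1,Q_2,Q_3\}$ reroutes them, and in general no system of four disjoint paths contains three prescribed ones (easy bipartite counterexamples exist). You are then left with four disjoint paths attached at four \emph{arbitrary} vertices $p_1,p_2,p_3,p_4$ of the theta graph, and the crossing linkage you need ($p_1$--$p_3$ and $p_2$--$p_4$ disjointly inside $P_1\cup P_2\cup P_3$) can fail outright --- for instance when all four $p_m$ lie on a single branch $P_1$ in the order $u,p_1,p_2,p_3,p_4,v$, every $p_1$--$p_3$ path in the theta graph passes through $p_2$ or $p_4$. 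Handling the interaction between the legs and an unstructured four-path system is precisely the content of Robertson and Seymour's argument, and your proposal assumes it away. (There is also a smaller unaddressed point: the definition of tripod does not make the legs internally disjoint from $P_1\cup P_2\cup P_3$ or from $C$, so even your explicit routings for $A_1,A_2$ need a preliminary cleaning-up step; and ``the hypothesis $|C|\ge4$ guarantees such a $q$ exists'' is not meaningful, since $q$ is determined by $R$ rather than chosen.)
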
    

\begin{lemma} [Lemma (2.4) of \cite{RoSe90}] Let $G$ be $2$-connected with a cycle $C$ of length $\ge3$ such that $G$ has no $2$-separation $(G_1,G_2)$ with $V(C) \subseteq V(G_1)$.  
If $G$ has no cross or tripod with respect to $C$, then $G$ admits a planar drawing with $C$ as a facial cycle. \label{lem:2.4RS} \end{lemma}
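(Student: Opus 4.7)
The plan is to prove the lemma by induction on $|V(G)|+|E(G)|$. The base case is $G=C$, where $C$ itself is a plane embedding with $C$ bounding both faces. For the inductive step, I begin by extracting a structural consequence of the no-2-separation hypothesis: every nontrivial $C$-bridge $B$ of $G$ has at least three feet on $C$. Indeed, if a nontrivial $B$ had only two feet $\{u,v\}$, then setting $G_2=B$ and $G_1$ to be the closure of $G-(V(B)\setminus\{u,v\})$ would yield a 2-separation $(G_1,G_2)$ with $V(G_1\cap G_2)=\{u,v\}$ and $V(C)\subseteq V(G_1)$, contradicting the hypothesis. (A $C$-bridge has at least two feet because $G$ is 2-connected.)

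Next I split into cases based on whether any $C$-bridge is nontrivial. If every $C$-bridge is trivial, then $G$ is $C$ together with a family of chords. Two chords $uv$ and $xy$ whose endpoints interleave on $C$ (i.e., $u,x,v,y$ in cyclic order) constitute a cross with respect to $C$, so by hypothesis the endpoint-pairs of the chords form a laminar family around $C$. Any laminar family of chord endpoints can be drawn simultaneously inside the disc bounded by $C$ without crossings, which produces a planar embedding of $G$ with $C$ as a facial cycle.

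If some $C$-bridge $B$ is nontrivial with feet $t_1,\ldots,t_k$ ($k\ge3$), I embed each such bridge into its own sub-disc of $C$, using the induction hypothesis applied to $B$ together with an auxiliary cycle built from a path through $B$ between two feet and the complementary arc of $C$ joining those feet; absence of a tripod or cross in $G$ with respect to $C$ descends to absence of a tripod or cross in this smaller graph with respect to the auxiliary cycle, which is why the induction goes through. Two nontrivial bridges with interleaved feet on $C$ are ruled out by producing either a cross (linking a path through one bridge with a path through the other) or, when the paths must share endpoints, a tripod with the two cubic vertices inside the respective bridges. The produced sub-embeddings are then glued along $C$ to yield the global embedding.

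The main obstacle is the tripod extraction in the interleaved-bridges subcase: the tripod definition requires \emph{two} distinct interior cubic vertices $u,v$ together with three independent $uv$-paths and three disjoint legs to $C$, rather than a single central branch vertex. Producing the second cubic vertex requires combining the 3-feet property of each nontrivial bridge (established above) with a Menger-type argument inside $B\cup B'$, and care is needed to arrange that the three legs land on three \emph{distinct} vertices of $C$. Once this extraction is carried out, the interleaved configuration is contradicted, the bridges can be nested cleanly, and gluing the sub-embeddings completes the inductive step.
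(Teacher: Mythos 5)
This lemma is not proved in the paper: it is quoted verbatim (modulo terminology) as Lemma (2.4) of Robertson and Seymour's \emph{Graph Minors IX} \cite{RoSe90}, so there is no in-paper argument to compare yours against; your attempt has to stand on its own, and as written it does not. The first correct observation (every nontrivial $C$-bridge has at least three feet) and the all-chords case are fine. The fatal gap is in the inductive step for a single nontrivial bridge $B$. Your auxiliary cycle $C'$ consists of a path $P$ through $B$ between two feet together with an arc of $C$, so $C'$ contains interior vertices of $P$ that do \emph{not} lie on $C$. A cross with respect to $C'$ may have some or all of its four ends on $P-V(C)$, and a tripod with respect to $C'$ may have legs landing on $P-V(C)$; neither yields a cross or tripod with respect to $C$ in $G$. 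Hence the claimed descent ``absence of a cross or tripod for $C$ implies absence for $C'$'' is false as stated, and the inductive hypothesis cannot be verified. This is not a technicality: the entire content of the lemma is that a single bridge with at least three feet, no cross, and no tripod can be embedded in a disc with its feet appearing in the same cyclic order as on $C$, and your induction simply relocates that statement into an unverifiable hypothesis. (You would also need to check the no-2-separation hypothesis for the smaller graph, which you do not address.)

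Two further gaps. First, you explicitly leave the tripod extraction for ``interleaved'' bridges unfinished (``care is needed,'' ``once this extraction is carried out''); note that when two distinct bridges have feet $u,v$ and $x,y$ with $u,x,v,y$ distinct and in that cyclic order, internal disjointness of bridges already gives a cross outright, so the genuinely hard residual case is bridges sharing feet (e.g.\ two bridges each with feet exactly $\{a,b,c\}$), and there the existence of the required three \emph{disjoint} legs to three \emph{distinct} vertices of $C$ is exactly what must be proved, not assumed. Second, even granting that each bridge embeds in a disc and that no two bridges interleave, gluing requires that the feet of each bridge appear on the boundary of its sub-disc in the same cyclic order in which they appear on $C$; this is forced by the absence of a cross inside a single bridge and is nowhere argued. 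In short, the proposal reduces the lemma to three sublemmas, each of which carries essentially the full difficulty of the original statement.
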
    

Note in these two lemmas, $C$ has been specified. However, in our applications $C$ will only be partially given. Our problem is to decide if the partial cycle can be completed into a cycle $C$ so that $G$ admits a planar drawing with $C$ as a facial cycle. In the following we make the problem more precise.
 
A \textit{circlet} $\Omega$ of a graph $G$ consists of a cyclically ordered set of distinct vertices $v_1,v_2,\dots,v_n$ of $G$, where $n \geq 4$, and a set of edges of $G$ of the form $v_iv_{i+1}$, where $v_{n+1}=v_1$. Note not necessarily all edges of $G$ of the given form are in $\Omega$. Denote by $V(\Omega)$ and $E(\Omega)$ the set of vertices and edges of $\Omega$, respectively. We call $v_i \in V(\Omega)$ \textit{isolated} if no edge of $\Omega$ is incident with $v_i$. An \textit{$\Omega$-cycle} is a cycle $C$ of $G$ such that $V(\Omega) \subseteq V(C)$, $E(\Omega) \subseteq E(C)$, and the cyclic ordering of $V(\Omega)$ agrees with the ordering in $C$. For each $i$, the $v_iv_{i+1}$-path of $C$ that does not contain $v_{i+2}$ is called a \textit{segment} of $C$. We say $(G, \Omega)$ is \textit{$4$-connected} if $(G, V(\Omega))$ is $4$-connected.   

\begin{theorem} Let $\Omega$ be a circlet of $G$ such that $G$ has an $\Omega$-cycle and $(G, \Omega)$ is $4$-connected. Then either $G$ admits a planar drawing in which some facial cycle is an $\Omega$-cycle, or $G$ has an $\Omega$-cycle $C$ and two crossing paths on $C$ for which each segment of $C$ contains at most two of the four ends of these two crossing paths. 
\label{lem:Scycle}
\end{theorem}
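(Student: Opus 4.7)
The plan is to apply Lemmas~\ref{lem:2.3RS} and~\ref{lem:2.4RS} to a carefully chosen $\Omega$-cycle, using the $4$-connectedness of $(G,\Omega)$ to rule out small separations and the tripod case, and then to refine any resulting cross into a balanced one.

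First I would fix an $\Omega$-cycle $C$ of $G$ maximizing $|V(C)|$. Since $V(\Omega)\subseteq V(C)$, any $2$-separation $(G_1,G_2)$ of $G$ with $V(C)\subseteq V(G_1)$ would put $V(\Omega)$ inside $V(G_1)$, contradicting $4$-connectedness of $(G,\Omega)$. Hence the hypothesis of Lemma~\ref{lem:2.4RS} is met, and either $G$ admits a planar drawing with $C$ as a facial cycle (the first conclusion), or $G$ has a cross or a tripod with respect to $C$.

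To eliminate the tripod case, suppose $G$ has a tripod $(P_1,P_2,P_3)$ with degree-$3$ vertices $u,v$ and attachment points $s_1,s_2,s_3$, and that $G$ has no cross with respect to $C$. Lemma~\ref{lem:2.3RS} then yields a $k$-separation $(G_1,G_2)$ of $G$ with $k\le 3$, $V(C)\subseteq V(G_1)$, and $V(P_1\cup P_2\cup P_3)\subseteq V(G_2)$. Again $V(\Omega)\subseteq V(G_1)$, so $4$-connectedness forces $k=3$ and $|V(G_2)|=4$; but $u,v,s_1,s_2,s_3$ are five pairwise distinct vertices lying in $V(G_2)$, a contradiction. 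Thus $G$ admits a cross $(P,Q)$ with ends $u,v$ and $x,y$ respectively, where $u,x,v,y$ appear in that cyclic order along $C$.

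If the cross $(P,Q)$ is balanced then $(C,P,Q)$ is the desired object. Otherwise some segment $S$ of $C$ between consecutive circlet vertices $v_i,v_{i+1}$ contains at least three of $\{u,x,v,y\}$; up to symmetry, $u,x,v\in V(S)$. Let $C[u,v]$ denote the sub-arc of $S$ from $u$ through $x$ to $v$, and set $C'=(C\setminus C[u,v]^\circ)\cup P$. Since $C[u,v]^\circ\subseteq S^\circ$ contains no circlet vertex, $C'$ is again an $\Omega$-cycle. The main obstacle I anticipate is arranging the rerouting so that $|V(C')|>|V(C)|$, contradicting the maximality of $|V(C)|$: when $P$ has interior vertices this is immediate, but when $P$ is a single edge and $V(C[u,v]^\circ)=\{x\}$ one must either apply the symmetric argument to $Q$ or invoke $4$-connectedness of $(G,\Omega)$ to supply a more substantial rerouting---for example, by finding a third internally disjoint $uv$- or $xy$-path via the connectivity hypothesis. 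A cleaner alternative I would pursue is to work with a lexicographic monovariant such as $(|V(C)|,\,\text{imbalance of the cross})$ and iterate refinements of $(C,P,Q)$ until a balanced configuration is produced.
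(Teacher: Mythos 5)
Your first two steps are sound and match the paper: the $4$-connectedness of $(G,\Omega)$ rules out the $2$-separation hypothesis of Lemma~\ref{lem:2.4RS}, and your counting argument (the tripod has at least five vertices $u,v,s_1,s_2,s_3$, which cannot fit in a part of size $4$) correctly eliminates the tripod via Lemma~\ref{lem:2.3RS}. The gap is in the balancing step, which is the heart of the theorem, and you have essentially conceded it yourself. Two concrete problems. First, the rerouting $C'=(C\setminus C[u,v]^\circ)\cup P$ removes $x$ from the cycle, so the second crossing path $Q$ ceases to be a $C'$-path; the new cycle need not carry any cross at all, so you cannot simply compare ``imbalance'' of crosses on $C$ and $C'$, and your proposed lexicographic monovariant is undefined (you would have to re-invoke Lemma~\ref{lem:2.4RS} on $C'$ and there is no reason the resulting cross is related to the old one, nor that the process terminates, especially since $|V(C')|$ can decrease). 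Second, maximizing $|V(C)|$ is the wrong normalization: what the argument actually needs is that no $C$-bridge has all its feet inside a single segment, which the paper arranges with Lemma~\ref{lem:stablebridge}; maximality of $|V(C)|$ does not imply this, and without it you cannot find the escape path out of the offending segment.

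Concretely, the paper keeps the cycle $C$ (normalized so that all bridges are stable) and repairs the \emph{cross} rather than the cycle: if all four ends lie in a segment $P$, the bridge $B\supseteq Q_1$ must have a foot off $P$, and a path in $B$ from that foot into $Q_1\cup Q_2$ yields a cross with only three ends in $P$; if exactly three ends lie in $P$, one again routes out of $B$ to a foot off $P$, and in the residual sub-case (where this escape path meets $Q_2$) one builds a tripod in the auxiliary graph $G+v_1v_2$ with respect to the contracted cycle, minimizes its legs, and uses the fact that $\{x_1,x_3,y\}$ is not a $3$-cut to extract a path $R$ landing on $C'-\{v_1,v_2,x_4\}$; only then is a new $\Omega$-cycle $C''$ formed, together with explicit crossing paths from $P_3\cup T\cup R$. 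None of this is recoverable from your sketch as written, so the proof is incomplete at its critical step.
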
 

We need the following two lemmas for proving this theorem. Several different formulations of these lemmas are known, but we were not able to find in the literature the formulation we need. So we prove the lemmas here. Our proofs are similar to that of other versions of the lemmas. 
Let $H$ be a subgraph of $G$ and let $J$ be a subgraph of $H$. An $H$-bridge $B$ is called \textit{$J$-local} if all feet of $B$ are in $J$.  

\begin{lemma} Let $H$ be a subgraph of a simple graph $G$ with $|H|\ge3$. Let $P$ be an $H$-path in $G$ and let $x,y$ be the two ends of $P$. Let $B_1,\dots,B_t$ be all $(H \cup P)$-bridges that are $P$-local. Suppose $G$ has no $k$-separation $(G_1,G_2)$ with $k < 3$ and $V(H) \subseteq V(G_1)$. Then $H_0 = P \cup B_1 \cup \dots \cup B_t$ has an $xy$-path $Q$ such that no $(H \cup Q)$-bridge is $Q$-local.
\label{lem:localbridge}
\end{lemma}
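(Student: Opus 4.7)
The plan is to select $Q$ by an extremal argument and then use the no-$2$-separation hypothesis to rule out $Q$-local bridges. Specifically, I would take $Q$ to be an $xy$-path in $H_0$ that maximizes $|V(Q)|$, with a tie-breaker of minimizing $|E(H_0[V(Q)])\setminus E(Q)|$ (the chord count of $Q$ in $H_0$). The proof then proceeds by supposing, for contradiction, that some $(H\cup Q)$-bridge $B$ is $Q$-local.

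The first step is to show $B\subseteq H_0$. If $B$ is non-trivial with interior $C$ and some $v\in C\setminus V(H_0)$ exists, then $v$ lies in a non-$P$-local $(H\cup P)$-bridge $B'$, whose definition forces a foot $w\in V(H)\setminus\{x,y\}$. A path from $v$ to $w$ inside $B'$ stays within $V(G)\setminus V(H\cup Q)$ except at its endpoint $w$, so $w$ becomes a foot of the component of $v$ in $G-V(H\cup Q)$, contradicting $Q$-locality of $B$ since $w\notin V(Q)$. Thus $C\subseteq V(H_0)$, and since any edge of $G$ with both ends in $V(H_0)$ already lies in $E(H_0)$, the bridge $B$ is a subgraph of $H_0$. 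For non-trivial $B$ with foot set $F$, the partition $(V(G)\setminus C,\ C\cup F)$ is a separation of $G$ with $V(H)\subseteq V(G)\setminus C$ (using $|H|\geq 3$ and $F\cap V(H)\subseteq\{x,y\}$) and cut $F$, so the hypothesis forces $|F|\geq 3$.

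To derive the contradiction in the non-trivial case, I would pick the first and last feet $f_1,f_k\in F$ along $Q$ and an $f_1 f_k$-path $R$ in $B[C\cup F]$ with interior vertices in $C$. Then $Q':=Q[x,f_1]\cup R\cup Q[f_k,y]$ is still an $xy$-path in $H_0$, and because every $H_0$-neighbor of $C$ lies in $F\subseteq V(Q[f_1,f_k])$---the part of $Q$ that is discarded---no new chord of $Q'$ arises between $V(R)\cap C$ and $V(Q)\setminus V(Q[f_1,f_k])$. I would then leverage $|F|\geq 3$ together with connectivity of $C$ to choose $R$ so that $|V(R)|>|V(Q[f_1,f_k])|$, contradicting the maximality of $|V(Q)|$. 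The trivial case, where $B$ is a chord $uv$ of $Q$ in $E(H_0)$, is handled by combining the tie-breaker with a rerouting that swaps the chord into $E(Q)$ while preserving the vertex count. The main obstacle I expect is establishing $|V(R)|>|V(Q[f_1,f_k])|$: when the feet are widely separated on $Q$ and $C$ is small, a two-foot replacement can instead shorten $Q$, so the argument likely requires invoking all three feet simultaneously---perhaps via Menger-type disjoint paths inside $B$---or a refined secondary extremum guaranteeing that $V(Q)$ already covers the interior vertices of the $P$-local bridges.
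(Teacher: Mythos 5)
There is a genuine gap, and you have in fact identified it yourself: the extremal quantity you choose (maximize $|V(Q)|$, tie-break on chord count) is not one that a rerouting through a $Q$-local bridge can be shown to improve. A $Q$-local bridge $B$ may consist of a single vertex $c$ adjacent to three widely separated vertices $q_1,q_5,q_9$ of $Q$; then every reroute through $B$ replaces a long subpath of $Q$ by a path of length two and strictly \emph{decreases} the vertex count, so the maximality of $|V(Q)|$ yields no contradiction, and knowing $|F|\ge 3$ does not help. Your fallback for trivial local bridges fails for the same reason: swapping a chord $uv$ into the path forces the interior vertices of $Q[u,v]$ off the path, so it violates the primary maximality rather than invoking the tie-breaker. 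Your sentence about ``invoking all three feet simultaneously \ldots or a refined secondary extremum'' is exactly where the missing idea lives, and without it the proof does not close.

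The paper's resolution is to maximize a different potential: for each candidate path $R$, list the $R$-local bridges $J_1,\dots,J_n$ by decreasing edge count, let $J_0$ be the union of all non-local bridges, and maximize $\alpha(R)=(\|J_0\|,\|J_1\|,\dots,\|J_n\|)$ lexicographically. If a $Q$-local bridge exists, one reroutes through the \emph{smallest} local bridge $J_n$, between two feet $a,b$ chosen so that $Q[a,b]$ contains all feet of $J_n$. The point of the no-$2$-separation hypothesis (together with $|H|\ge 3$ and simplicity of $G$, which forces the interior $Z$ of $Q[a,b]$ to be nonempty) is not to bound the number of feet of $J_n$, but to guarantee that some \emph{other} bridge $J\ne J_n$ has a foot in $Z$; after the reroute, $J$ and $Z$ merge into a single larger bridge, so one of the terms $\|J_0\|,\dots,\|J_{n-1}\|$ strictly increases and $\alpha$ goes up lexicographically. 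This sidesteps entirely the need to control path length. Your preliminary observations (that a local bridge lies in $H_0$, and that a nontrivial one has at least three feet) are correct but are not sufficient to finish; you would need to replace your extremal choice with something of this ``bridge-absorbing'' type.
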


\begin{proof} For any $H$-path $R$ with ends $x,y$, we define $\alpha(R)$ as follows. Let $J_1,\dots,J_n$ ($n \geq 0$) be all $(H \cup R)$-bridges that are $R$-local; let $J_0$ be the union of all other $(H\cup R)$-bridges. Suppose $||J_1|| \geq ||J_2|| \geq \dots \geq ||J_n||$. Then $\alpha(R) = (||J_0||,||J_1||,\dots,||J_n||)$. Among all $xy$-paths in $H_0$, let $Q$ be the path that maximizes $\alpha$ lexicographically. We prove that no $(H \cup Q)$-bridge is $Q$-local.

Suppose otherwise. Let $J_1,\dots,J_n$ ($n \geq 1$) be all $(H \cup Q)$-bridges that are $Q$-local, where $||J_1|| \geq ||J_2|| \geq \dots \geq ||J_n||$, and let $J_0$ be the union of all other $(H \cup Q)$-bridges. Since $G$ has no $k$-separation $(G_1,G_2)$ with $k < 2$ and $V(H) \subseteq V(G_1)$, $J_n$ has at least two feet. Let $a,b$ be the two feet so that the only $ab$-path $Q_{ab}$ of $Q$ contains all feet of $J_n$. Let $L$ be an $ab$-path in $J_n$ that avoids all other feet of $J_n$ and let $Q'$ be obtained from $Q$ by replacing $Q_{ab}$ with $L$. Since $J_n$ is a subgraph of $H_0$, $Q'$ is again an $xy$-path in $H_0$.

Let $Z = V (Q_{ab}-\{a,b\})$. Since $G$ is simple, the choice of $a$ and $b$ implies $Z \neq \emptyset$. Note: $(H \cup Q)$-bridges (other than $J_n$) that have no feet in $Z$ are also $(H \cup Q')$-bridges; $(H \cup Q)$-bridges (other than $J_n$) that have a foot in $Z$ are combined with $Q_{ab}- \{a,b\}$ into a single $(H \cup Q')$-bridge $J^*$ (which may include some subgraphs of $J_n$); and all other $(H \cup Q')$-bridges are subgraphs of $J_n$. Since $|H|\ge3$ and since $G$ has no $k$-separation $(G_1,G_2)$ with $k <3$ and $V(H) \subseteq V(G_1)$, we deduce that at least one $(H \cup Q)$-bridge $J \neq J_n$ has a foot in $Z$. Therefore, $J^*$ contains $J$ and $Q_{a,b}$, implying that at least one of the terms $||J_0||,||J_1||,\dots,||J_{n-1}||$ is increased (since either $J$ is part of $J_0$ or $J$ is some $J_i$ for $i = 1,...,n-1$). What this means is that $\alpha(Q')$ is lexicographically bigger than $\alpha(Q)$, contradicting the maximality of $\alpha(Q)$ and so the lemma is proved. \end{proof} 

Let $H$ be a subdivision of a graph $J$. Then $V(J)$ is a subset of $V(H)$ and $V(J)$-paths of $H$ are exactly the paths obtained by subdividing edges of $J$. We call these paths {\it branches} of $H$. Suppose a subgraph $H$ of $G$ is a subdivision of another graph. Then an $H$-bridge $B$ is called \textit{unstable} if $B$ is $P$-local for a branch $P$ of $H$.

\begin{lemma} Let $G$ contain a subdivision $H$ of $J$ as a subgraph, where $J$ is loopless of order $\ge3$. Suppose $G$ is simple and has no $k$-separation $(G_1,G_2)$ with $k < 3$ and $V(J) \subseteq V(G_1)$. Then $G$ contains a subdivision $H^*$ of $J$ obtained by adjusting branches of $H$ such that all $H^*$-bridges are stable.
\label{lem:stablebridge}
\end{lemma}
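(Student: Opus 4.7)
The plan is to iteratively apply Lemma \ref{lem:localbridge} branch by branch, using a lexicographic measure to force termination. Let $\mathcal{A}$ denote the finite collection of subdivisions of $J$ in $G$ with the same branch vertices as $H$; these are the adjustments of $H$. For $H^\ast\in\mathcal{A}$, let $\mu(H^\ast)$ be the multiset $\{\,|E(B)|:B\text{ is an unstable } H^\ast\text{-bridge}\,\}$, sorted in decreasing order and compared lexicographically. I choose $H^\ast$ to lex-minimize $\mu(H^\ast)$, and claim this $H^\ast$ has no unstable bridge.

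Suppose otherwise: let $B$ be an unstable $H^\ast$-bridge, $P$-local for a branch $P$ with endpoints $x,y\in V(J)$. Set $\tilde H := H^\ast - \mathrm{int}(P)$. Since $V(J)\subseteq V(\tilde H)$, we have $|V(\tilde H)|\ge 3$, and any $k$-separation $(G_1,G_2)$ with $V(\tilde H)\subseteq V(G_1)$ satisfies $V(J)\subseteq V(G_1)$ and thus $k\ge 3$ by hypothesis. Apply Lemma \ref{lem:localbridge} to $\tilde H$ and the $\tilde H$-path $P$: if $B_1,\dots,B_t$ are all the $P$-local $H^\ast$-bridges, we obtain an $xy$-path $Q\subseteq P\cup B_1\cup\cdots\cup B_t$ such that no $(\tilde H\cup Q)$-bridge is $Q$-local. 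Setting $\hat H := \tilde H\cup Q$ gives another adjustment of $H$, with the branch $P$ rerouted as $Q$.

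The final step is to verify $\mu(\hat H)<\mu(H^\ast)$ lexicographically, contradicting the minimality of $H^\ast$. The bridges $B_1,\dots,B_t$ are gone from $\hat H$, and Lemma \ref{lem:localbridge} rules out $Q$-local $\hat H$-bridges. Any $H^\ast$-bridge $B'$ that was $P_i$-local for some branch $P_i\ne P$ survives unchanged as an $\hat H$-bridge with the same feet: since $V(P_i)\cap V(P)\subseteq\{x,y\}\subseteq V(Q)$ and distinct $H^\ast$-bridge interiors are disjoint, $B'$'s interior stays outside $V(\hat H)$ and its feet still lie on $V(P_i)$; such $B'$ contribute identically to $\mu(H^\ast)$ and $\mu(\hat H)$. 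The main obstacle is bounding the edge counts of \emph{newly} unstable $\hat H$-bridges, which arise from previously stable $H^\ast$-bridges that absorb ejected vertices in $\mathrm{int}(P)-V(Q)$ and become $P_i$-local for some $P_i\ne Q$. To make the lex measure strictly decrease I would pick $B$ to be an unstable bridge of maximum edge count and argue that each newly unstable bridge has strictly fewer edges than $B$; this should follow from the extremal (minimum-$\|Q\|$) choice in Lemma \ref{lem:localbridge}. The trivial chord subcase, where $B$ is a single $V(P)$-chord with just two feet, requires separate handling: here the no-$2$-separation hypothesis on $V(J)$ forces any emergent component of $G-V(\hat H)$ to acquire a foot outside $V(P_i)$, keeping it stable after the reroute.
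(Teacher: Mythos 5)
Your skeleton is the same as the paper's: delete the interior of the offending branch $P$, apply Lemma \ref{lem:localbridge} to $\tilde H = H^* - \mathrm{int}(P)$ and the $\tilde H$-path $P$ to reroute it as $Q$, and iterate. That setup is correct (the hypotheses of Lemma \ref{lem:localbridge} are verified properly, and your observation that bridges local to a branch $P_i \neq P$ survive unchanged is sound). The gap is exactly where you flag it, and your proposed patch does not close it. First, the statement of Lemma \ref{lem:localbridge} guarantees no extremal property of $Q$ (there is no ``minimum-$\|Q\|$'' choice to invoke). Second, and decisively, the size of a newly unstable $\hat H$-bridge is governed by the previously \emph{stable} $H^*$-bridges it absorbs, not by the unstable bridge $B$ you chose to fix: a stable bridge with a foot in $\mathrm{int}(P)\setminus V(Q)$ can have arbitrarily many edges and merges into the new bridge together with the ejected pieces of $P$. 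So ``each newly unstable bridge has strictly fewer edges than $B$'' is false in general, and your lexicographic measure can increase. As written, termination is not established.

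The missing ingredient is the structural fact the paper relies on, namely that the reroute creates (essentially) no new unstable bridges, after which even the crude measure ``number of unstable bridges'' decreases. The reason is not about edge counts: any genuinely new $\hat H$-bridge contains a maximal subpath of $P$ on ejected vertices, and through the two edges of $P$ leaving that subpath it acquires two distinct feet in $V(P)\cap V(Q)\subseteq V(Q)$. Since two distinct branches of a subdivision meet only in their common ends, such a bridge can be local to a branch other than $Q$ only in the degenerate case where those two feet are precisely the ends $x,y$ of $P$ and $J$ has a parallel branch with the same ends; this case is what the paper's preprocessing step (replacing branches by single edges where possible), the simplicity of $G$, and the connectivity hypothesis are there to handle. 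Your measure never engages with this locality argument, so the proof has a genuine hole at its only nontrivial step.
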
 

\begin{proof} We first replace each branch of $H$ by a single edge of $G$ whenever it is possible. 
Then we repeatedly apply Lemma~\ref{lem:localbridge} to every branch of $H$. Note after each application of Lemma~\ref{lem:localbridge}, no new unstable bridge is created. Therefore, after the final step all bridges are stable. \end{proof}

\begin{proof} [Proof of Theorem~\ref{lem:Scycle}] Assume $G$ does not have a planar drawing in which some facial cycle is an $\Omega$-cycle. We will show $G$ has an $\Omega$-cycle and two crossing paths on the cycle that satisfy the theorem. Without loss of generality we assume $G$ is simple.

Let $C$ be an $\Omega$-cycle of $G$. By Lemma~\ref{lem:stablebridge} we assume no segment of $C$ contains all feet of any $C$-bridge. Since $G$ does not have a desired planar drawing, by Lemma~\ref{lem:2.4RS}, $G$ has either two crossing paths or a tripod on $C$. By Lemma~\ref{lem:2.3RS}, if $G$ has a tripod, then it also has two crossing paths (since $(G,\Omega)$ is $4$-connected) so let $Q_1, Q_2$ be crossing paths on $C$. Let $x_1,x_3$ be the ends of $Q_1$ and $x_2,x_4$ be the ends of $Q_2$. Suppose for the sake of contradiction some segment $P$ of $C$ contains more than two of $x_1,x_2,x_3,x_4$. Let $v_1,v_2$ be the ends of $P$.

Suppose first that $x_1,x_2,x_3,x_4 \in P$. Let $B$ be the $C$-bridge that contains $Q_1$ and let $x$ be a foot of $B$ not on $P$. Let $Q$ be a path in $B$ from $x$ to the interior of $Q_1$ (or $Q_1 \cup Q_2$ if $B$ also contains $Q_2$). Then $Q_1 \cup Q_2 \cup Q$ contains two crossing paths on $C$ so that $P$ contains only three of the four ends. Hence without loss of generality, we can assume $P$ contains $x_1,x_2,x_3$ but not $x_4$. Again let $B$ be the $C$-bridge that contains $Q_1$. Then $B$ contains a path $Q$ from the interior of $Q_1$ to a foot of $B$ not on $P$. If $Q$ is disjoint from $Q_2$, then $Q_1 \cup Q_2 \cup Q$ contains the desired crossing paths. If $Q$ meets $Q_2$, say at a vertex $y$, then let $G'=G+v_1v_2$ and let $C'$ be the cycle of $G'$ obtained by replacing $P$ with $v_1v_2$. Now $Q_1 \cup Q_2 \cup Q \cup P$ contains a tripod $T$ with respect to $G'$ and $C'$; the feet of $T$ are $v_1,v_2,x_4$. Without loss of generality, assume $T$ is a tripod with feet $v_1,v_2,x_4$ such that the legs $P_1$ from $v_1$ to $x_1$, $P_2$ from $v_2$ to $x_3$, and $P_3$ from $x_4$ to $y$ are minimal. Since $\{x_1,x_3,y\}$ is not a $3$-cut of $G'$, there is a path $R$ of $G'-\{x_1,x_3,y\}$ from $T$ to $C' \cup P_1 \cup P_2 \cup P_3$. By the minimality of $P_1,P_2,P_3$, we know $R$ ends at $C'-\{v_1,v_2,x_4\}$. Now an $\Omega$-cycle $C''$ can be obtained from $C'$ by replacing $v_1v_2$ with a path in $P_1\cup P_2\cup T$, and desired crossing paths on $C''$ can be obtained from $P_3\cup T \cup R$.  \end{proof}

When $\Omega$ has no isolated vertices, we can further strengthen Theorem~\ref{lem:Scycle}. 

\begin{theorem} Let $\Omega$ be a circlet of $G$ such that $\Omega$ has no isolated vertices, $|E(\Omega)| \geq 3$, $G$ has an $\Omega$-cycle, and $(G,\Omega)$ is $4$-connected. Then either $G$ admits a planar drawing in which some facial cycle is an $\Omega$-cycle, or $G$ has an $\Omega$-cycle $C$ and two crossing paths on $C$ for which among the four paths of $C$ divided by the four ends of the two crossing paths, at least three of them contain an edge of $\Omega$. 
\label{lem:Scycleedge} 
\end{theorem}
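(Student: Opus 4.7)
The plan is to invoke Theorem~\ref{lem:Scycle} and then strengthen its conclusion via an extremal choice. Assuming $G$ admits no planar drawing with some facial cycle an $\Omega$-cycle, Theorem~\ref{lem:Scycle} produces an $\Omega$-cycle $C$ together with two crossing $C$-paths $Q_1, Q_2$ whose ends $x_1, x_2, x_3, x_4$ appear in this cyclic order on $C$, in such a way that each segment of $C$ contains at most two of the $x_i$. Among all triples $(C, Q_1, Q_2)$ meeting this conclusion I would choose one that maximizes the number of arcs $C[x_i, x_{i+1}]$ (indices mod $4$) containing an edge of $\Omega$, and aim to show this number must be at least three.

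The key geometric ingredient, enabled by the hypothesis that $\Omega$ has no isolated vertex, is the following observation: if an arc $C[x_i, x_{i+1}]$ contains no $\Omega$-edge, then no interior vertex of this arc lies in $V(\Omega)$. Indeed, any such interior $v \in V(\Omega)$ would force both of $v$'s incident $\Omega$-edges to lie on the two $C$-segments neighboring $v$, and both of those segments sit inside $C[x_i, x_{i+1}]$, contradicting the absence of $\Omega$-edges there. As a corollary, if two $\Omega$-edge-free arcs share a common end $x_j$, then $x_j \notin V(\Omega)$, since otherwise $x_j$ would have no $\Omega$-edge on either of its two sides along $C$ and would be isolated in $\Omega$.

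Now suppose for contradiction that at least two of the four arcs carry no $\Omega$-edge, and split into the adjacent case (the two bad arcs share a common end $x_j$) and the opposite case (the bad arcs are $C[x_i, x_{i+1}]$ and $C[x_{i+2}, x_{i+3}]$). In the adjacent case the $C$-sub-path $A = C[x_{j-1}, x_{j+1}]$ through $x_j$ is internally disjoint from $V(\Omega)$ by the corollary. By the $4$-connectivity of $(G, \Omega)$, the pair $\{x_{j-1}, x_{j+1}\}$ cannot be a $2$-cut, so there must be an attachment from the interior of $A$ to another part of $C$ through $G - \{x_{j-1}, x_{j+1}\}$. Combining this attachment with tripod/cross manipulations in the spirit of Lemma~\ref{lem:2.3RS} together with the stable-bridge technology of Lemma~\ref{lem:stablebridge}, I would reroute $Q_1$ or $Q_2$, or slide $x_j$ onto a nearby $\Omega$-vertex, yielding a strictly better configuration and contradicting maximality. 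The opposite case uses the hypothesis $|E(\Omega)| \ge 3$: at least one of the two good arcs carries two or more $\Omega$-edges, providing enough room to introduce a fresh crossing that splits that arc and transfers an $\Omega$-edge into a currently empty arc.

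The main obstacle I expect is to certify, after each reroute, that the new cycle $C'$ is still an $\Omega$-cycle (that is, still contains $V(\Omega)$ and $E(\Omega)$ in the prescribed cyclic order) and that the new crossing paths remain a valid cross with each segment of $C'$ containing at most two of their ends. The stable-bridge and tripod-minimality arguments already developed in the proof of Theorem~\ref{lem:Scycle} supply the template for this bookkeeping, which will constitute the bulk of the proof.
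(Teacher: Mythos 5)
Your reduction to Theorem~\ref{lem:Scycle}, your observation that an $\Omega$-edge-free arc has no $\Omega$-vertex in its interior, and the corollary about a shared end of two such arcs are all correct and are exactly the observations the paper uses. But both of your cases then have gaps. In the adjacent case you already hold the winning cards and do not play them: if the two $\Omega$-edge-free arcs meet at $x_j$, your corollary shows $C[x_{j-1},x_{j+1}]$ has no $\Omega$-vertex in its interior, hence it lies inside a single segment of $C$, and that segment then contains the three ends $x_{j-1},x_j,x_{j+1}$ --- directly contradicting the property, already guaranteed by Theorem~\ref{lem:Scycle}, that each segment contains at most two of the four ends. So the adjacent case (and likewise three or four empty arcs) is simply impossible; no rerouting is needed, and the rerouting you propose in its place is not actually carried out (nothing in your setup guarantees that the attachment from the interior of $A$ can be converted into a strictly better configuration).

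The more serious gap is in the opposite case, which is the real content of the theorem. Saying that a good arc with two or more $\Omega$-edges provides ``enough room to introduce a fresh crossing that splits that arc'' is not an argument: you must exhibit the fresh crossing path and, crucially, show that its foot on that arc lands \emph{between} two $\Omega$-edges; otherwise the resulting cross can again have only two $\Omega$-bearing arcs. Your extremal choice (maximize the number of $\Omega$-bearing arcs over all valid configurations) gives no leverage for this, because a badly placed new crossing simply fails to improve the count and yields no contradiction with maximality. The paper's mechanism is different: re-choose $C,P_1,P_2$ so that the arc $Q_1$ carrying at least two $\Omega$-edges is as \emph{short} as possible, use $3$-connectivity to obtain a path $R$ in $G-\{x_1,x_2\}$ from $Q_1-\{x_1,x_2\}$ to $(C\cup P_1\cup P_2)-V(Q_1)$, and argue that the minimality of $Q_1$ forces the foot of $R$ on $Q_1$ to lie between two edges of $\Omega$; then $R$ together with $P_1\cup P_2$ (or with one of $P_1,P_2$ and $C$) yields the required cross. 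You would need to supply this minimality argument, or an equivalent device, for your sketch to close.
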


\begin{proof} By Theorem~\ref{lem:Scycle} we assume $G$ has an $\Omega$-cycle $C$ and two crossing paths $P_1,P_2$ on $C$ for which each segment of $C$ contains at most two of the four ends $x_1,x_2,x_3,x_4$ of $P_1,P_2$. We need to show $G$ has two crossing paths on an $\Omega$-cycle that satisfy the theorem.  

Assume $x_1,x_2,x_3,x_4$ appear in that forward order around $C$. Let $Q_i =C[x_i,x_{i+1}]$ for $i=1,2,3$ and  $Q_4=C[x_4,x_1]$. Suppose to the contrary that at most two of the $Q_i$ contain edges of $\Omega$. Then the choice of $P_1,P_2$ and the assumption that $\Omega$ has no isolated vertices imply that exactly two of the $Q_i$ contain edges of $\Omega$ and these two $Q_i$ cannot be adjacent. Without loss of generality, suppose $Q_1$ and $Q_3$ contain edges of $\Omega$. Since $|E(\Omega)| \geq 3$, we further assume $Q_1$ contains at least two edges of $\Omega$.

Re-choose (if necessary) $C, P_1,P_2$ so that $Q_1$ is as short as possible. Since $G$ is $3$-connected, $G- \{x_1,x_2\}$ has a path $R$ from $Q_1 - \{x_1,x_2\}$ to $(C \cup P_1 \cup P_2) - V(Q_1)$. Let $v$ be the endpoint of $R$ on $Q_1$; then $v$ is between two edges of $\Omega$ since otherwise the minimality of $Q_1$ is violated. If the other end of $R$ is on $P_1\cup P_2$, then $R\cup P_1\cup P_2$ contains the desired two crossing paths. If the other end of $R$ is on $C$, then $R$ and one of $P_1,P_2$ form the desired crossing paths. 
\end{proof}

We close this section by proving the following technical lemma which we will use in the next section.

\begin{lemma}
\label{lem:tri-ext}
Suppose a $3$-connected graph $G$ has a triangle $T$ and edge $e$ such that at most one end of $e$ is in $T$. 
Then either $G$ contains one of the two graphs in Figure \ref{fig:nonplanar} as a minor or $G=S_3(G_0;G_1,...,G_k)$ where $G_0$ is planar with $T$ as a facial cycle and each $G_i$ $(i>0)$ has order $\ge 5$ and is $3$-summed to a facial triangle of $G_0$ different from $T$. 
\end{lemma}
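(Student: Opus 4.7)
The plan is to decompose $G$ as a $3$-sum centered on a single graph $G_0$ that contains both $T$ and $e$, and then invoke the planarity results of Section 7 to extract either the desired planar structure for $G_0$ or one of the two non-planar minors in Figure \ref{fig:nonplanar}.

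First I would set $Z = V(T)\cup V(e)$, so $|Z|\in\{4,5\}$. Since every $3$-cut has only three vertices, $Z$ is not contained in any $3$-cut of $G$, and Lemma \ref{lem:3sum} yields $G=S_3(G_0;G_1,\dots,G_k)$ with $Z\subseteq V(G_0)$, $(G_0,Z)$ being $4$-connected, and each $|G_i|\geq 5$. The exceptional case of Lemma \ref{lem:3sum} is ruled out by $|Z|\geq 4$, so each $G_i$ is actually a minor of $G$. A short case check confirms that $T$ is a triangle of $G_0$ and that $e$ may be taken to be an edge of $G_0$ (by modifying the decomposition slightly if both ends of $e$ happen to lie on a summing triangle, which is possible because such a summing triangle would have to occur inside $G_0$).

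Next I would show $G_0$ is planar with $T$ as a facial cycle by applying Lemma \ref{lem:2.4RS} with $C=T$. The absence of a $2$-separation of $G_0$ with $V(T)$ on one side is immediate from the $4$-connectivity of $(G_0,Z)$. A cross with respect to $T$ is impossible because $|T|=3$ cannot accommodate four distinct vertices in alternating cyclic order. The only remaining obstruction to planarity with $T$ facial is a tripod $(P_1,P_2,P_3)$ with respect to $T$, having cubic vertices $u,v$ outside $T$ and feet $t_1,t_2,t_3$ on $T$; contracting the legs and appropriate arcs of each $P_i$ already exhibits $K_5$ minus the edge $uv$ inside $G_0$. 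Using the $3$-connectivity of $G_0$ together with the fact that the endpoints of $e$ lie in $Z$, I would route a path in $G_0$ from $e$ to the tripod to supply the missing $uv$-link, producing either a $K_5$ minor or, depending on where $e$ attaches relative to the tripod, the companion obstruction in Figure \ref{fig:nonplanar}. The exceptional $4$-connectivity case in which $T$ bounds only a single-vertex appendage of $G_0$ is subsumed here: combining that appendage with $e$ yields the same family of non-planar minors.

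Once $G_0$ is planar with $T$ facial, each summing triangle $T_i$ must differ from $T$ (since $T_i$ bounds a piece of order at least two across the $3$-sum in $G$, whereas $T$ by $4$-connectivity of $(G_0,Z)$ bounds at most a single-vertex appendage of $G_0$) and must be facial (since in the unique embedding of the $3$-connected planar graph $G_0$, a triangle with no vertex of $G_0$ on one of its two sides must be a face). The main obstacle is the tripod analysis above: producing an explicit non-planar minor of the prescribed form requires tracing how $e$ attaches relative to the tripod and whether the routing path from $e$ meets a single $P_i$ or crosses between several $P_i$'s, and the exceptional $4$-connectivity case in which $T$ bounds a single-vertex appendage of $G_0$ must be carefully absorbed into the obstruction rather than left as a separate scenario.
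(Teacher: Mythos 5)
Your overall strategy matches the paper's: decompose $G$ via Lemma \ref{lem:3sum} into a star $S_3(G_0;G_1,\dots,G_k)$, apply Lemma \ref{lem:2.4RS} to $T$ (noting a cross on a triangle is impossible), and convert a tripod into one of the minors of Figure \ref{fig:nonplanar}. But there are two genuine gaps. First, your choice $Z=V(T)\cup V(e)$ differs from the paper's $Z=V(T)$, and this creates a problem you do not resolve: with your $Z$, Lemma \ref{lem:3sum} may well attach some $G_i$ to the triangle $T$ itself (this happens exactly when $V(T)$ is a $3$-cut of $G$ separating off a part avoiding $e$), and then the conclusion "each $G_i$ is summed to a facial triangle different from $T$" fails. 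Your argument that summing triangles differ from $T$ confuses separations of $G_0$ with separations of $G$: $4$-connectivity of $(G_0,Z)$ restricts what $T$ can cut off \emph{inside $G_0$}, but says nothing about whether a summand $G_i$ of the decomposition of $G$ is glued along $T$. The paper avoids this by first disposing of the case "$V(T)$ is a $3$-cut of $G$" with a separate observation (three independent paths from an outside vertex to $V(T)$ with $e$ on one of them, yielding $A_2$), and only then invoking Lemma \ref{lem:3sum} with $Z=V(T)$, which is legitimate precisely because $V(T)$ is not a $3$-cut. A related side effect of your larger $Z$ is that $(G_0,V(T))$ need not be $4$-connected, which you would need below.

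Second, the tripod-to-minor step — which you yourself identify as the main obstacle — is where essentially all the work lies, and it is not carried out. The paper's treatment is: (i) use $4$-connectivity of $(G_0,V(T))$ and a leg-minimization argument to upgrade the tripod to three independent $uv$-paths each actually passing through a vertex of $T$ (i.e., $s_i=t_i$); and (ii) split into the case where both ends of $e$ lie on $P_1\cup P_2\cup P_3$ (direct verification of $A_1$ or $A_2$) and the case where they do not, which is handled by the same routing observation used for the $3$-cut case and then reduced to the first case. Merely contracting the tripod to $K_5$ minus an edge does not by itself locate $e$ in the configuration that $A_1$ and $A_2$ require, and the subsequent application in Theorem \ref{thm:3need} depends on $e$ sitting in a prescribed position in the minor. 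Also a small inaccuracy: the exceptional case of Lemma \ref{lem:3sum} is not ruled out by $|Z|\ge4$ (a cubic vertex together with its neighborhood has four vertices); it is ruled out because $V(T)\subseteq Z$ would force such a vertex into a triangle — though this point is harmless since the lemma's conclusion does not require the $G_i$ to be minors.
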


\begin{figure}[ht]
\centerline{\includegraphics[scale=0.7]{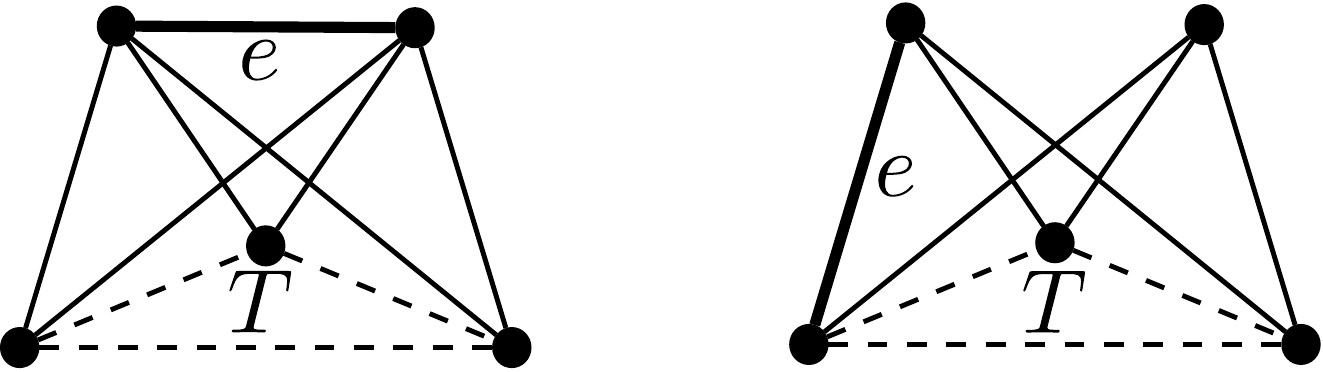}}
\caption{Two nonplanar minors $A_1$ and $A_2$ \label{fig:nonplanar}}
\end{figure} 

\begin{proof}
We first make an observation: {\it if $Z\subseteq V(G)$ contains at most one end of $e$ and $|Z|\ge3$, then $G$ has three independent paths from a vertex outside $Z$ to three distinct vertices of $Z$ such that $e$ is on one of these paths.} To see this, first find two disjoint paths from the two ends of $e$ to $Z$. These paths and $e$ form a $Z$-path $P$ containing $e$. Let $z_1,z_2$ be the two ends of $P$. Then $G-\{z_1,z_2\}$ has a path $Q$ from $Z-\{z_1,z_2\}$ to $P-\{z_1,z_2\}$. It follows that $P\cup Q$ is the union of the three required paths. 

If $V(T)$ is a 3-cut of $G$ then we deduce from the above observation by taking $Z=V(T)$ that $G$ contains an $A_2$ minor. Assume $V(T)$ is not a 3-cut. By Lemma \ref{lem:3sum}, $G$ has 3-connected minors $G_0,G_1,...,G_k$ such that $G=S_3(G_0;G_1,...,G_k)$, where $T\subseteq G_0$, $(G_0,V(T))$ is 4-connected, and $|G_i|\ge5$ for all $i>0$. Suppose $G_1,...,G_k$ are chosen to be maximal. Then we may assume that $G_0$ is nonplanar because otherwise $G_0,G_1,...,G_k$ satisfy the requirements of the lemma. 

We claim that $G_0$ has three independent $uv$-paths $P_1,P_2,P_3$, for some $u,v$ outside $T$, such that $T$ meets all of these three paths. To see this, first note by Lemma \ref{lem:2.4RS}, $G_0$ has a tripod $(P_1,P_2,P_3)$ on $T$. Let $Q_i,s_i,t_i$ be determined as in the definition of tripod. We choose the tripod with $Q_1\cup Q_2\cup Q_3$ as small as possible. If $s_i\ne t_i$ for some $i$, say for $i=1$, then, as $(G_0,V(T))$ is 4-connected, $G_0-\{s_1,s_2,s_3\}$ has a path $P$ from $P_1\cup P_2\cup P_3$ to $T$. It is routine to see that the union of $P$ and all $P_i$ and $Q_i$ contains a tripod with shorter legs. This contradiction shows $s_i=t_i$ for all $i$ and thus our claim follows.

Now we consider two cases. First, suppose both ends of $e$ are in $Z=V(P_1\cup P_2\cup P_3)$. Then it is straightforward to verify that either $A_1$ or $A_2$ is a minor of $G$. Now in the second case, we assume $Z$ contains at most one end of $e$. By our earlier observation, $G$ has three independent paths $R_1,R_2,R_3$ from a vertex outside $Z$ to $Z$ such that $e$ is on one of these paths. If $V(R_1\cup R_2\cup R_3) \cap Z=V(T)$ then $G$ contains $A_2$ as a minor. If $V(R_1\cup R_2\cup R_3) \cap Z\ne V(T)$ then $R_1\cup R_2\cup R_3$ contains a $Z$-path $R$ such that $R$ contains $e$ and at least one end of $R$ is not in $T$. This situation reduces to our first case and thus $G$ contains the required minor.
\end{proof}


\section{$3$-connected $\theta_{t,t,t}$-free graphs}

In this section we focus on 3-connected graphs. Let $(G_0,w_0)$ be a weighted plane graph and let $(G_1,w_1),\dots,(G_k,w_k)$ be disjoint weighted graphs with $|G_i|\ge 5$ for all $i>0$. Denote by $S_3^p((G_0,w_0)$; $(G_1,w_1)$, ..., $(G_k,w_k))$ a weighted graph $(G,w)$ obtained by $3$-summing $(G_1,w_1)$, ..., $(G_k,w_k)$ to inner facial triangles of $(G_0,w_0)$. Let $r,s\ge2$ be integers. Let $\mathcal L_{r,s}^3$ be the class of $3$-connected members of $\mathcal L_{r,s}$. Let $\mathcal P_r^3$ be the class of 3-connected members $(G,w)\in\mathcal P_r$ such that if $C$ is the outer cycle of $G$ then either $|C|\ge 3r$ or $C$ contains at least three edges of weight at least $r$. Let $\Phi^3(\mathcal L_{r,s}^3, \mathcal P_r^3)$ be the class of $3$-connected weighted graphs of the form $S_3^p((G_0,w_0)$; $(G_1,w_1)$, $\dots$, $(G_k,w_k))$ ($k \geq 0$) over all $(G_0,w_0) \in \mathcal P_r^3$ and $(G_1,w_1)$, $\dots$, $(G_k,w_k) \in \mathcal L_{r,s}^3$ with $|G_i|\ge5$ for all $i>0$. In the rest of the paper we will call an edge {\it heavy} if its weight is at least $t$. The following is the main result of this section. 

\begin{theorem} There exists a function $f_{\ref{thm:3need}}(t)$ such that if $(G,w)$ is $3$-connected and $\theta_{t,t,t}$-free, then one of the following holds. \\ 
\indent $(a)$ $(G,w) \in \Phi^3(\mathcal L_{t,f_{\ref{thm:3need}}(t)}^3,\mathcal P_t^3)$, \\ 
\indent $(b)$ $G \in \mathcal{L}_{f_{\ref{thm:3need}}(t)}^3$ and either $G$ has at most two heavy edges or $G$ has exactly three edges and these three form a triangle.
\label{thm:3need} 
\end{theorem}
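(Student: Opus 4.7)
I would split the argument based on whether $G$ contains a long path. Choose $f_{\ref{thm:3need}}(t)$ large enough to accommodate all the bounds that will arise; for concreteness one can take $f_{\ref{thm:3need}}(t)\ge f_{\ref{lem:4.2.12}}(N)$ for a sufficiently large $N=N(t)$. Assume throughout that $(G,w)$ is 3-connected and $\theta_{t,t,t}$-free. First I would dispose of the case $\ell(G)<f_{\ref{thm:3need}}(t)$, where $G\in\mathcal{L}^3_{f_{\ref{thm:3need}}(t)}$. If $G$ has at most two heavy edges, or $G$ consists of exactly three edges forming a triangle, we are already in case (b). Otherwise $G$ has three or more heavy edges and is not $K_3$; in that subcase I would argue that either $G$ itself lies in $\mathcal{P}^3_t$ (so case (a) holds with $k=0$), or the 3-connectivity of $G$ together with three heavy edges yields a $\theta_{t,t,t}$: Menger's theorem and Lemma \ref{lem:tri-ext} (applied to two of the heavy edges playing the role of triangle/edge) would route three pairwise-independent paths of weight at least $t$ between two common endpoints.

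The main case is $\ell(G)\ge f_{\ref{thm:3need}}(t)$. Lemma \ref{lem:4.2.12} then furnishes a $W_N$ or $L_N^+$ topological minor; in either case there is a long cycle $C^*$ (the rim of $W_N$ or the perimeter cycle of $L_N^+$) together with many spokes/rungs joining $C^*$ to a common structure. Using these many spokes or rungs as independent routes, I would show that any pair of crossing paths on an $\Omega$-cycle can be combined with two chords of the wheel/ladder to produce three independent long paths between two vertices, and hence a $\theta_{t,t,t}$. I would then construct a circlet $\Omega$ on $C^*$ whose vertices and edges are chosen so that any $\Omega$-cycle $C$ satisfies $|C|\ge 3t$ or contains at least three heavy edges of $G$ (including heavy edges as edges of $\Omega$ whenever they can be routed onto the perimeter). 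After passing, if necessary, to a 3-summand via Lemma \ref{lem:3sum} to ensure $(G,V(\Omega))$ is 4-connected, I would apply Theorem \ref{lem:Scycleedge}: the crossing-paths alternative is ruled out by the preceding observation, so $G$ admits a planar drawing with an $\Omega$-cycle as a facial cycle.

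Taking this facial cycle as the outer cycle $C$, Lemma \ref{lem:removeC} applies (since $|C|\ge 3t$ or $C$ carries at least two heavy edges) and shows that $G$ has no $C$-path of weight $\ge 2t$ and no edge of weight $\ge t$ outside $E(C)$; in particular, the underlying base graph lies in $\mathcal{P}^3_t$. To conclude case (a), I would iteratively apply Lemma \ref{lem:3sum} in conjunction with Lemma \ref{lem:tri-ext}: whenever the current plane drawing admits a 3-separation whose cut forms an inner facial triangle (and whose other side has order $\ge 5$), peel off that side as a 3-summand $G_i$. Each $G_i$ is 3-connected, $\theta_{t,t,t}$-free (being a minor of $G$), carries no heavy edge (by Lemma \ref{lem:removeC}), and must satisfy $\ell(G_i)<f_{\ref{thm:3need}}(t)$, since otherwise a second application of the main-case argument inside $G_i$ would produce a long cycle and associated structure inside the triangle, which when combined with $C$ via the summing triangle would yield a $\theta_{t,t,t}$ in $G$. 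Hence each $G_i\in\mathcal{L}^3_{t,f_{\ref{thm:3need}}(t)}$ and $(G,w)\in\Phi^3(\mathcal{L}^3_{t,f_{\ref{thm:3need}}(t)},\mathcal{P}^3_t)$, as required.

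I expect the main obstacle to be the design of the circlet $\Omega$: it must simultaneously force the outer cycle to satisfy the $\mathcal{P}^3_t$ condition ($|C|\ge 3t$ or three heavy edges on $C$) while guaranteeing that the two-crossing-paths alternative of Theorem \ref{lem:Scycleedge} can be upgraded, using the spokes or rungs of the $W_N$/$L_N^+$ minor, to an actual $\theta_{t,t,t}$. A secondary difficulty is bounding $\ell(G_i)$ for the 3-summed pieces without allowing any heavy edge to escape the base graph; this will require delicate use of Lemmas \ref{lem:removeC} and \ref{lem:Sdecompbd} together with a careful choice of summing triangles so that heavy edges are either in $C$ or forced to be handled within a small summand.
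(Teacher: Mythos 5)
Your skeleton matches the paper's (locate a planar base graph via a circlet and Theorem~\ref{lem:Scycleedge}, put it in $\mathcal P_t^3$ via Lemma~\ref{lem:removeC}, control the summands via Lemmas~\ref{lem:3sum} and~\ref{lem:tri-ext}), and your treatment of the short-path case $G\in\mathcal L^3_{f_{\ref{thm:3need}}(t)}$ is close to the paper's: there one takes a cycle $Q$ containing as many heavy edges as possible, shows all heavy edges lie on $Q$, forms the circlet from the heavy edges, and rules out the crossing-paths alternative of Theorem~\ref{lem:Scycleedge} because three of the four arcs would carry heavy edges. (One correction there: Lemma~\ref{lem:tri-ext} is not used to manufacture a $\theta_{t,t,t}$ from three heavy edges; its role is at the very end, to show that no $3$-summand $G_i$ retains a heavy edge, in tandem with the maximality of $G_0$.)

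The long-path case, however, has two genuine gaps. First, you cannot go directly from ``$G$ has a $W_N$ topological minor'' to a circlet on its rim plus Theorem~\ref{lem:Scycleedge}. The paper needs all of Lemma~\ref{lem:ttt}: an analysis of how the bridges of the wheel subdivision attach (the auxiliary graph $\Gamma$, the Ramsey-type function $f_R$ forcing induced $K_{t+2}$ or $P^{2t+2}$ in its components, and the ``linkage'' clustering) to conclude $G=S_3(W'_k;G_1,\dots,G_k)$; without this the summands produced by Lemma~\ref{lem:3sum} are uncontrolled and the spokes do not by themselves convert crossing paths into a $\theta_{t,t,t}$. Second, and more seriously, your claim that each peeled-off summand must satisfy $\ell(G_i)<f_{\ref{thm:3need}}(t)$ ``since otherwise \dots would yield a $\theta_{t,t,t}$'' is false as stated: a $3$-connected summand attached at an inner facial triangle can have an arbitrarily long path without creating $\theta_{t,t,t}$ --- for instance, $3$-sum a large planar fan to an inner facial triangle of $W_{3t}$; the result is still planar and $\theta_{t,t,t}$-free. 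The correct statement is a dichotomy: either the long path in the summand produces a ``long spoke'' and hence $\theta_{t,t,t}$, or the planar base graph can be enlarged, contradicting a maximality choice of $G_0$. That maximality mechanism --- choosing $G_0$ with $|G_0|$ maximum, recursively applying Lemma~\ref{lem:ttt} inside the offending summand, building a second circlet there, and invoking Theorem~\ref{lem:Scycleedge} a second time --- is the heart of the paper's Lemma~\ref{thm:ttt} and is absent from your plan; without it the bound on $\ell(G_i)$ cannot be derived.
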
 

The proof of this theorem is divided into three steps. The first two are given in two lemmas, which deal with  unweighted graphs. For any integer $k \geq 2$, let $W_k^+$ be the graph obtained from $W_{2k}$ with rim cycle $x_1x_2 ... x_{2k}x_1$ by first subdividing the edges $x_1x_2$ and $x_{k+1}x_{k+2}$ and then joining these two new vertices by an edge. 
Let $W_k'$ be obtained from $W_k$ by adding a parallel edge to each of its spokes. We define $W_k'$ for technical purpose because now $W_k'$ is the edge-disjoint union of $k$ triangles and thus we can talk about 3-summing graphs to all these triangles.

\begin{lemma} There exists a function $f_{\ref{lem:ttt}}(t,k)$ such that every $3$-connected graph with a path of length $f_{\ref{lem:ttt}}(t,k)$ either contains $W_t^+$ or $L_t^+$ as a topological minor or can be expressed as $S_3(W_k'; G_1,\dots,G_k)$, where $t\ge2$ and $k\ge4$ are integers and $|G_i|\ge5$ for all $i$. 
\label{lem:ttt}
\end{lemma}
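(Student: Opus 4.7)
The plan is to extract a very large wheel topological minor via Lemma~\ref{lem:4.2.12}, then either use an $H$-path to upgrade this to $W_t^+$ or else exhibit the required $3$-sum decomposition. Set $f_{\ref{lem:ttt}}(t,k) := f_{\ref{lem:4.2.12}}(M)$ where $M = M(t,k)$ is chosen large enough for the counting below (roughly $M \geq 4(k+1)t$). By Lemma~\ref{lem:4.2.12}, $G$ contains either $L_M^+$ (hence $L_t^+$, and we are done) or $W_M$ as a topological minor. In the latter case fix a hub vertex $v_0$, a rim cycle $C$ with attachment points $u_1,\dots,u_M$ in cyclic order, and internally disjoint spoke paths $P_1,\dots,P_M$ from $v_0$ to $u_i$, with $P_i\cap C=\{u_i\}$. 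Set $H = v_0 \cup P_1 \cup \cdots \cup P_M \cup C$; by Lemma~\ref{lem:stablebridge} applied to $H$ viewed as a subdivision of $W_M$, we may assume no $H$-bridge has all its feet on a single branch.

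Next I would argue that any ``long-range'' $H$-path $Q$ with both ends on $C$, in segments $C[u_i,u_{i+1}]$ and $C[u_j,u_{j+1}]$ whose index gaps $j-i$ and $M-(j-i)$ both exceed $2t$, forces a $W_t^+$ topological minor: $Q$ plays the role of the extra diameter edge of $W_t^+$, and taking roughly every $\lfloor M/(4t)\rfloor$-th spoke within each of the two $C$-arcs on either side of the endpoints of $Q$ yields the required subdivided $W_{2t}$. A similar but slightly more delicate analysis handles the case of an $H$-path with an end interior to some spoke $P_\ell$ (using Lemma~\ref{lem:2.3RS} to promote a tripod with respect to $C$ to a genuine cross under $3$-connectivity), as well as the case of two crossing $H$-paths together realising the diameter of $W_t^+$.

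Assume henceforth that no such long-range crossing exists; then every $H$-bridge attaches to $C$ within a short cyclic window of width at most $2t$. Pick indices $1=i_1<i_2<\cdots<i_k\le M$ with consecutive cyclic gaps at least $3$ and wider than the attachment-window bound, which is possible since $M$ is large. For each $j$ let $G_j$ be the subgraph of $G$ consisting of the arc $C[u_{i_j},u_{i_{j+1}}]$, the spokes $P_\ell$ with $i_j\le \ell<i_{j+1}$, and all $H$-bridges whose feet lie in this slice, together with a summing triangle added on $\{v_0,u_{i_j},u_{i_{j+1}}\}$. Because each $H$-bridge attaches within a short window, its assignment to exactly one $G_j$ is unambiguous, and this decomposition expresses $G$ as the $3$-sum $S_3(W_k'; G_1,\dots,G_k)$ along these $k$ triangles (the base $W_k'$ with its $k$ edge-disjoint triangles is precisely the structure carved out by $v_0$ together with the chosen boundary rim vertices and their summing triangles). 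The gap condition ensures $|G_j|\geq 5$, and $3$-connectivity of each $G_j$ follows from the summing triangle plus the internally disjoint spokes inside the slice; Lemma~\ref{lem:3sum} can be invoked at any residual $3$-separations to fully justify this.

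The main obstacle will be the second paragraph: verifying that \emph{every} configuration violating the ``no long-range $H$-path'' assumption truly produces $W_t^+$ rather than merely $W_t$, since one must extract an \emph{extra} connecting edge beyond the wheel structure in order to turn $W_t$ into $W_t^+$. This requires careful case analysis of chords of the rim, tripods with respect to $C$, and paths that pass through the spokes $P_i$. The combined use of Lemma~\ref{lem:2.3RS} (promoting tripods to crosses) and Lemma~\ref{lem:stablebridge} (ruling out degenerate bridges with all feet on one branch) will be essential for handling these interactions cleanly.
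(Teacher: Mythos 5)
There is a genuine gap in the second half of your argument: the dichotomy ``either some $H$-path is long-range (giving $W_t^+$) or every $H$-bridge attaches within a short cyclic window (giving the $3$-sum decomposition)'' is not exhaustive in the way you need. Even when every individual bridge has a short attachment window (width at most $2t$, say), the bridges can \emph{chain} around the rim: $B_1$'s window overlaps $B_2$'s, which overlaps $B_3$'s, and so on, so that the union of the windows covers long stretches (or all) of the rim cycle and leaves no $k$ places at which to cut. In that situation your claim that ``its assignment to exactly one $G_j$ is unambiguous'' fails --- no choice of boundary spokes $u_{i_1},\dots,u_{i_k}$ avoids straddling some bridge --- and the graph need not decompose as $S_3(W_k';G_1,\dots,G_k)$ at all. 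The paper's proof devotes most of its length to exactly this obstruction: it forms an auxiliary graph $\Gamma$ on a maximal family of bridges, with adjacency when attachment windows share an edge, observes $\Gamma$ is claw-free, and invokes a Ramsey-type result (the function $f_R(t)$ from the cited matroid paper) to show that a large component of $\Gamma$ contains an induced $K_{t+2}$ or $P^{2t+2}$, each of which yields $L_t^+$ (not $W_t^+$). A parallel ``linkage'' analysis handles chains of bridge-clusters that connect through the interiors of spokes, again producing $L_t^+$. Only after bounding both the components of $\Gamma$ and the linkages can one count enough free rim vertices ($|Y\cup Z|\ge 3k$) to place the $k$ summing triangles. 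Your proposed threshold $M\approx 4(k+1)t$ is symptomatic of the missing step: the paper needs $n\ge 3k(t+1)^2f_R(t)$ precisely because of the Ramsey argument.

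A secondary, smaller issue: bridges whose feet lie partly in the interior of a spoke $P_\ell$ are not governed by your ``window on $C$'' bookkeeping at all; the paper handles them by contracting the spokes (passing to $H'$) and then reintroducing them through the linkage definition. Your appeal to Lemma~\ref{lem:2.3RS} addresses crosses and tripods relative to the rim but does not substitute for this accounting, since such bridges can glue adjacent sectors together through $v_0$'s spokes without ever creating a cross on $C$.
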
  

\begin{proof} Let $f_R(t)$ be the minimum integer such that every connected simple graph on at least $f_R(t)$ vertices has an induced $K_{t+2}$, $K_{1,3}$, or $P^{2t+2}$ (such a function arises as an extension of Ramsey theory and its existence was proven in \cite{ding1}).  We prove $f_{\ref{lem:ttt}}(t,k)=f_{\ref{lem:4.2.12}}(3k(t+1)^2f_R(t))$ satisfies the lemma. Let $G$ be a $3$-connected graph with $\ell(G)\ge f_{\ref{lem:ttt}}(t,k)$. We assume $G$ is simple and $G$ does not contain $L_t^+$ as a topological minor. Then by Lemma~\ref{lem:4.2.12}, $G$ has a subgraph $H$ isomorphic to a subdivision of $W_n$ where $n \geq 3k(t+1)^2f_R(t)$. Take $n$ to be maximal.      

Let $x_0, x_1, \dots, x_n$ be the non-subdividing vertices of $H$ with $x_0$ corresponding to the center. For $i=1,2,\dots,n$, let $P_i$ be the $x_0x_i$-path and $Q_i$ be the $x_ix_{i+1}$-path (where $x_{n+1}=x_1$) of $H$. By Lemma~\ref{lem:stablebridge}, we may assume the feet of each $H$-bridge are not contained in a single $P_i$ or $Q_i$. Let $E_0= E((P_1\cup ... \cup P_n)-x_0)$; let $G'= (G-x_0)/E_0$ and $H' = (H-x_0)/E_0$. To simplify our notation, we consider each $Q_i$ as a path of $H'$ as well. Note $H'$ is the cycle formed by the union of all paths $Q_i$, and because no trivial $H$-bridge has a foot at $x_0$, there is a one-to-one correspondence between $H$-bridges of $G$ and $H'$-bridges of $G'$. Moreover, since $G$ is $3$-connected, and by the choices of each $P_i$ and $Q_i$, each $H'$-bridge of $G'$ has at least two feet on $H'$.  

For any path $J$ of $H'$, define the {\it $Q$-length} of $J$ to be the least number of paths $Q_i$ whose union contains $J$. Suppose $G'$ has an $H'$-bridge $B$ that contains two feet $u,v$ for which both $uv$-paths of $H'$ are of $Q$-length $\ge t+1$. Then $H\cup B$ contains $W_t^+$ as a topological minor since $n\ge 2t+2$. Hence assume any two feet of any $H'$-bridge are contained in a path of $H'$ of $Q$-length $\le t$. Since $n>3t$, it follows that all feet of any $H'$-bridge are contained in a path of $H'$ of $Q$-length $\le t$. For each $H'$-bridge $B$, let $Q(B)$ denote the unique minimal path of $H'$ of $Q$-length $\le t$ that contains all feet of $B$. Generally, as $n$ is much bigger than $t$, we can think of each path $Q(B)$ as a very small segment of $H'$; this leads to a rough description of $G'$ as a long cycle with bridges attached to small segments of the cycle.  

To understand the structure of $G'$, we do not need to know all $H'$-bridges. Instead, knowing the ``maximal'' ones will be enough. Let $\mathcal{B}$ be a minimal set of $H'$-bridges such that for every $H'$-bridge $B_1$, there exists $B_2 \in \mathcal{B}$ with $Q(B_1) \subseteq Q(B_2)$. We will focus on bridges in $\mathcal{B}$. Let $\Gamma$ be the simple graph with vertex set $\mathcal{B}$ such that $B_1$ and $B_2$ are adjacent if $E(Q(B_1) \cap Q(B_2)) \neq \emptyset$. For any subgraph $\Gamma'$ of $\Gamma$, we will say the \textit{bridges of $\Gamma'$} to mean the bridges corresponding to the vertices of $\Gamma'$. 

Suppose a component $\Gamma'$ of $\Gamma$ has at least $f_R(t)$ vertices. Because of the way in which $\Gamma$ was constructed, $\Gamma$ does not contain any induced claws; therefore $\Gamma'$ contains an induced $K_{t+2}$ or $P^{2t+2}$. If $\Gamma'$ contains an induced $K_{t+2}$, then $H'$ together with bridges of this clique contains a subdivision of the M\"obius ladder as shown in Figure~\ref{fig:K2t}, where each bridge $B_i$ is represented by a chord joining the two ends of $Q(B_i)$. As a result, $G'$ and hence $G$ contains $L_t^+$ as a topological minor. Similarly, if $\Gamma'$ contains an induced $P^{2t+2}$, then $H'$ together with bridges of this path contains $L_t^+$ as a topological minor. 
Thus we conclude each component of $\Gamma$ has fewer than $f_R(t)$ vertices. 
      
\begin{figure}[ht]
\centerline{\includegraphics[scale=0.4]{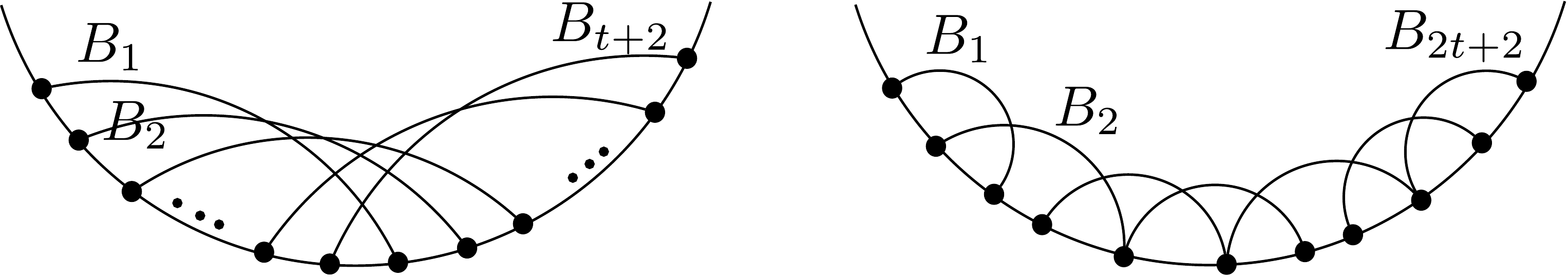}}
\caption{$\Gamma'$ contains an induced $K_{t+2}$ or $P^{2t+2}$\label{fig:K2t}}
\end{figure}

For each component $\Gamma'$ of $\Gamma$, let $Q(\Gamma')$ be the union of $Q(B)$ over all bridges $B$ of $\Gamma'$. Then $Q(\Gamma')$ is a path of $H'$ and its $Q$-length is less than $tf_R(t)$. Since  $n$ is much bigger than $tf_R(t)$, these paths again can be viewed as very short segments of $H'$. Let $\Gamma_1,\Gamma_2$ be distinct components of $\Gamma$. Observe $Q(\Gamma_1)$ and $Q(\Gamma_2)$ are edge-disjoint. We say $\Gamma_1,\Gamma_2$ are {\it linked} if $Q(\Gamma_1)$ and $Q(\Gamma_2)$ have a common end $v$ such that $v$ is obtained by contracting $E(P_i-x_0)$ for some $i$, and for each $j\in\{1,2\}$, there is a bridge $B_j$ for which, when viewed as an $H$-bridge of $G$, $B_j$ has a foot in $P_i-\{x_0,x_i\}$, and when viewed as an $H'$-bridge of $G'$, $B_j$ has a foot in $Q(\Gamma_j)-v$.

A {\it linkage} $\Lambda$ is a maximal sequence $\Gamma_1, ..., \Gamma_m$ of components of $\Gamma$ such that $Q(\Gamma_i)$ and $Q(\Gamma_{i+1})$ are linked for $i=1,...,m-1$. Suppose there is a linkage $\Lambda$ with $m \ge 4$. Let us consider each $\Gamma_i$ with $2\le i\le m-1$. Let the two ends of $Q(\Gamma_i)$ be obtained by contracting $P_r-x_0$ and $P_s-x_0$; let $B_r,B_s$ be bridges linking $P_r-\{x_0,x_r\}$ and $P_s-\{x_0,x_s\}$, respectively, to the rest of $Q(\Gamma_i)$, as shown in Figure~\ref{fig:laddertype1}. Note $B_r,B_s$ may not belong to $\cal B$ (and $B_r$ in the Figure is such an example). Choose two bridges of $\Gamma_i$ so that the two ends of $Q(\Gamma_i)$ are feet of these two bridges, respectively. In our example $B_r'$ and $B_s$ are these two bridges. Since $\Gamma_i$ is connected, it contains an induced path between these two bridges. Then bridges of this path together with $B_r,B_s$, and $Q(\Gamma_i)$ contain two disjoint paths $R_i',R_i''$ of $G$ between $P_r-x_0$ and $P_s-x_0$. Now it is easy to see that the union of $R_i',R_i''$ ($i=2,...,m-1$) and $H-x_0$ contains $L_{m-3}^+$ as a topological minor. 

\begin{figure}[ht]
\centerline{\includegraphics[scale=0.6]{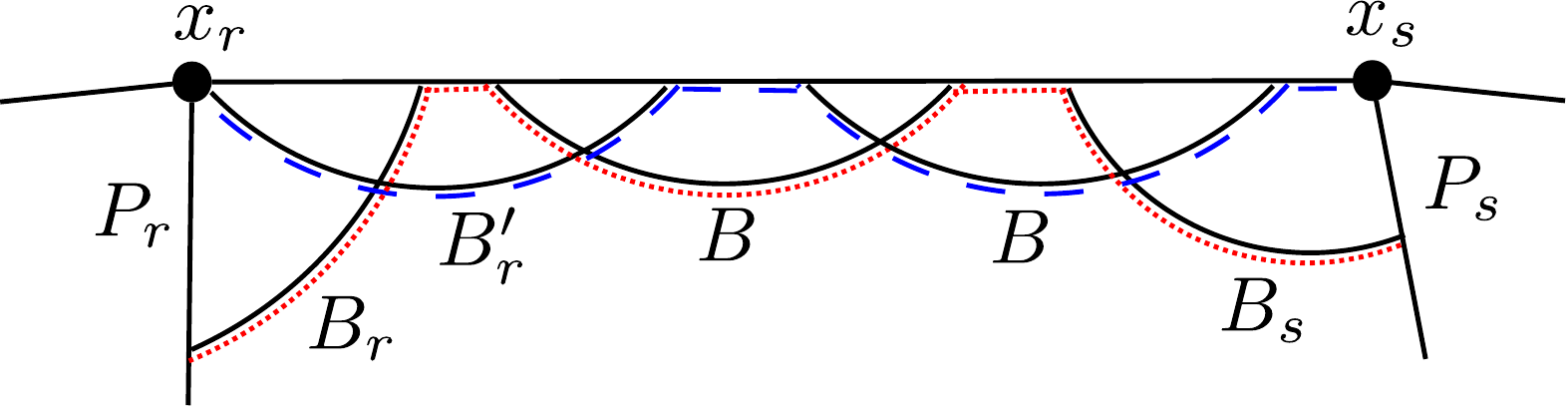}}
\caption{$Q(\Gamma_i)$ and some relevant bridges  
\label{fig:laddertype1}}
\end{figure}

What we have shown is that each linkage can have at most $t+2$ terms. Let $Q(\Lambda)$ denote the union of $Q(\Gamma_i)$ over all terms $\Gamma_i$ of $\Lambda$. Then $Q(\Lambda)$ is a path of $H'$ with $Q$-length $< t(t+2)f_R(t)$. Let $I_\Lambda$ consist of all $i$ such that either $x_i$ is an interior vertex of $Q(\Lambda)$ or $x_i$ is an end of $Q(\Lambda)$ for which $G$ has an $H$-bridge with feet in both $P_i-\{x_0,x_i\}$ and $Q(\Lambda)-x_i$. Let $Q^+(\Lambda)$ be the union of $Q(\Lambda)$ (as a path of $H$) and $P_i$ for all $i\in I_\Lambda$. The four shaded subgraphs in Figure \ref{fig:wheel} are examples of $Q^+(\Lambda)$.  
For any two distinct linkages $\Lambda_1,\Lambda_2$, since $Q(\Lambda_1)$ and $Q(\Lambda_2)$ are edge-disjoint, it follows that $Q^+(\Lambda_1)$ and $Q^+(\Lambda_2)$ are also edge-disjoint. Moreover, the only possible common vertices of $Q^+(\Lambda_1)$ and $Q^+(\Lambda_2)$ are $x_0$ and the common end of $Q(\Lambda_1)$ and $Q(\Lambda_2)$.

We claim that for every $H$-bridge $B$ there exists a linkage $\Lambda$ such that all feet of $B$ are contained in $Q^+(\Lambda)$. When $B$ is viewed as an $H'$-bridge, $Q(B)$ is contained in $Q(B')$ for some $B'\in \cal B$ and thus $Q(B)$ is contained in $Q(\Lambda)$ for a linkage $\Lambda$. Then the definition of $I_\Lambda$ implies that, when $B$ is viewed as an $H$-bridge, all feet of $B$ are in $Q^+(\Lambda)$, which proves our claim.

\begin{figure}[ht]
\centerline{\includegraphics[scale=0.4]{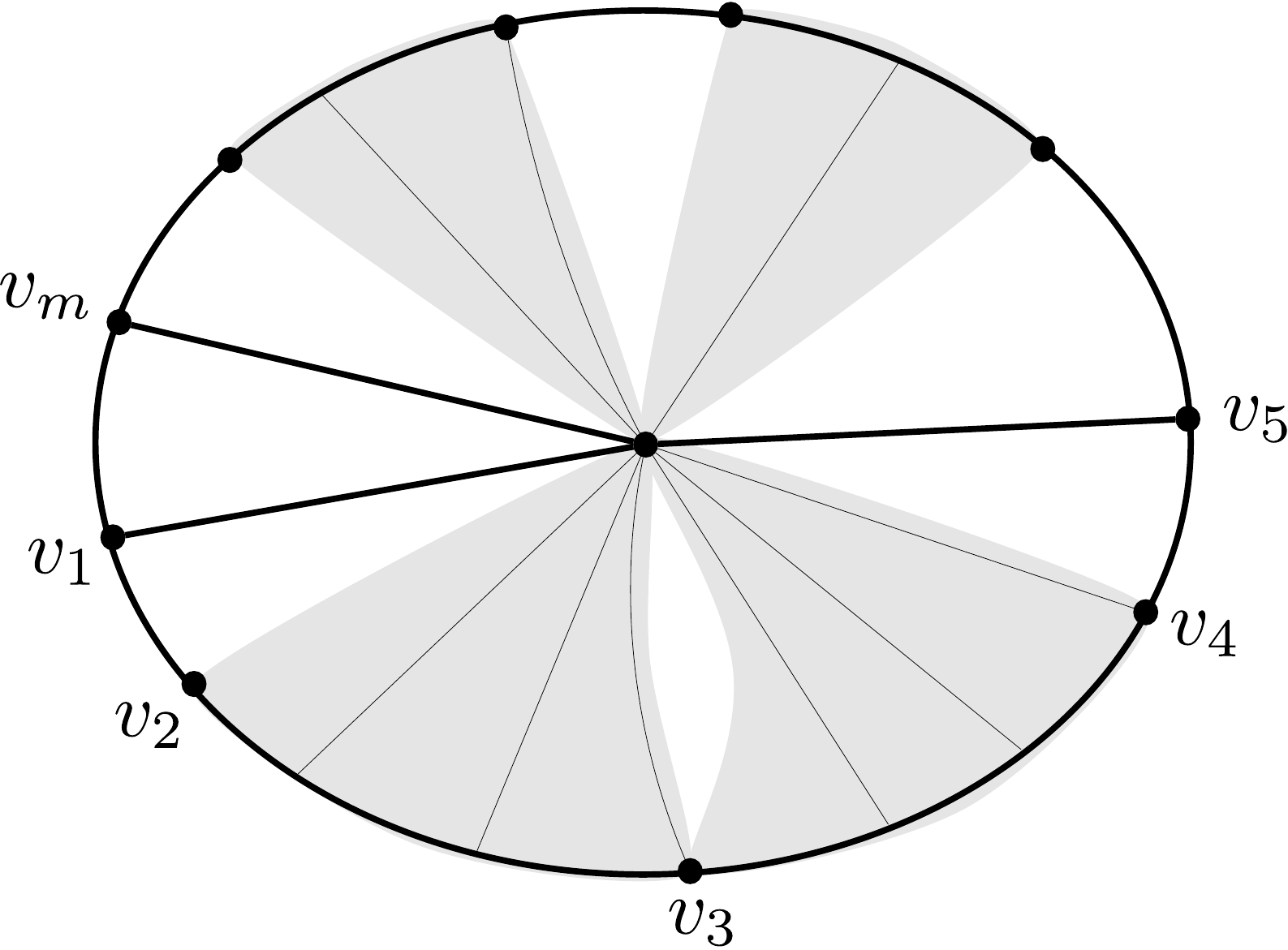}}
\caption{$H$ is divided according to $H$-bridges  \label{fig:wheel}}
\end{figure}

For each vertex $v$ of $H'$, if $v$ is a foot of at least one $H'$-bridge then $v$ is contained in $Q(B)$ for at least one $B\in\cal B$. Let $Z$ be the set of vertices $z$ of $H'$ such that $z$ is not contained in $Q(B)$ for any $B\in \cal B$. Then for each $z\in Z$ there exists $i$ such that $z=x_i$, $P_i$ contains only one edge $x_0x_i$, and $x_i$ has degree 3 in $G$. In Figure \ref{fig:wheel}, $Z$ consists of $v_1,v_5,v_m$. 
It follows that every vertex of $H'$ belongs to either $Z$ or $Q(\Lambda)$ for some linkage $\Lambda$. 
Let $Y$ be the set of vertices $y$ on the rim of $H$ such that there is a linkage $\Lambda$ for which, when $Q(\Lambda)$ is considered as a path of $H$, $y$ is an end of this path. In our example, $Y$ contains seven vertices including $v_2,v_3,v_4$. Let $v_1,v_2,...,v_m$ be all vertices of $Y\cup Z$, which are listed in the order they appear on the rim cycle of $H$. Now we verify  $G=S_3(W_m';H_1,...,H_m)$, where $W_m'$ contains $x_0$ as its center and cycle $v_1v_2...v_mv_1$ as its rim. In fact, if $v_i,v_{i+1}$ (where $v_{m+1}=v_1$) are the two ends of some $Q(\Lambda)$, then by our claim from the last paragraph, the graph consists of $Q^+(\Lambda)$ and all $H$-bridges with feet in $Q^+(\Lambda)$ are attached to triangle $x_0v_iv_{i+1}$ of $W_m'$. Since every $H$-bridge is attached to some $Q^+(\Lambda)$, for every other triangle of $W_m'$, no extra graph is attached to it. Thus $G=S_3(W_m';H_1,...,H_m)$, as required. Now it is clear that by taking a smaller wheel on vertices $x_0, v_1, v_4, ..., v_{\lfloor m/3\rfloor -2}$ we have $G=S_3(W_{\lfloor m/3\rfloor}';G_1,...,G_{\lfloor m/3\rfloor})$ and such that $|G_i|\ge5$ for all $i$.

It remains to show that $|Y\cup Z|\ge 3k$. 
We assume $|Z|<3k$ because otherwise we are done. We prove that there are at least $3k$ linkages, which would imply $|Y|\ge 3k$. Suppose otherwise. Since each $Q(\Lambda)$ has $Q$-length $< t(t+2)f_R(t)$, at most $t(t+2)f_R(t)$ vertices $x_i$ are contained in each $Q(\Lambda)$. It follows that the total number of vertices $x_i$ would be $< |Z| + 3kt(t+2)f_R(t) < 3k(t^2+2t+1)f_R(t) =n$. This contradiction completes our proof of the lemma.
\end{proof}

To simplify our notation, for any class $\mathcal G$ of weighted graphs, we will write $G\in\cal G$ if $(G,\varepsilon)\in\mathcal G$, where $\varepsilon(e)=1$ for all edges $e$ of $G$. Using this terminology, $G\in\mathcal P_r^3$ is equivalent to: $G$ is a 3-connected plane graph such that if $C$ is the outer cycle then $|C|\ge 3r$ and $G$ has no $C$-path of length at least $2r$. Note wheels are examples of such graphs. Let $\mathcal{L}_s^3$ denote the class of $3$-connected graphs in $\mathcal L_s$. Then $G\in \mathcal L_s^3$ if and only if $G\in\mathcal L_{r,s}^3$. Finally, both $S_3^p$ and $\Phi^3$ can be naturally restricted to unweighted graphs. That is, $S_3^p(G_0;G_1,...,G_k)$ is a graph obtained by 3-summing $G_1,...,G_k$, each of order $\ge5$, to inner facial triangles of a plane graph $G_0$, and $\Phi^3(\mathcal L_s^3,\mathcal P_r^3)$ is the class of $3$-connected graphs of the form $S_3^p(G_0;G_1,\dots,G_k)$ ($k \geq 0$) over all $G_0 \in \mathcal{P}_r^3$ and $G_1,\dots,G_k \in \mathcal{L}_s^3$ of order $\ge5$.

\begin{lemma} There exists a function $f_{\ref{thm:ttt}}(t)$ such that all $3$-connected $\theta_{t,t,t}$-free graphs belong to $\mathcal L_{f_{\ref{thm:ttt}}(t)}^3 \cup \Phi^3(\mathcal L_{f_{\ref{thm:ttt}}(t)}^3,\mathcal P^3_t)$. 
\label{thm:ttt}
\end{lemma}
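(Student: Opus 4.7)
The plan is to combine Lemma~\ref{lem:ttt} with the 3-sum refinement provided by Lemma~\ref{lem:tri-ext}, recursively absorbing well-behaved pieces into a planar base. Define $f_{\ref{thm:ttt}}(t)$ to be a polynomial in $t$ large enough to trigger Lemma~\ref{lem:ttt} with parameters $(t',k')$ where $t'$ is polynomial in $t$ and $k'\ge 3t$. If $\ell(G)<f_{\ref{thm:ttt}}(t)$, then conclusion (b) of the lemma holds trivially with $G\in\mathcal L^3_{f_{\ref{thm:ttt}}(t)}$. Otherwise Lemma~\ref{lem:ttt} returns one of three outcomes: (i) $G$ contains a $W_{t'}^+$ topological minor, (ii) $G$ contains an $L_{t'}^+$ topological minor, or (iii) $G=S_3(W_{k'}';G_1,\dots,G_{k'})$ with each $G_i$ 3-connected of order at least five.

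In outcomes (i) and (ii), I argue that the topological minor together with the 3-connectivity of $G$ forces $\theta_{t,t,t}$, contradicting the hypothesis. The subdivision already supplies two internally disjoint long paths between a natural pair of branch vertices (the two half-rims cut by the chord $uv$ in case (i), or the two parallel ladder sides in case (ii)), so it remains to produce a third internally disjoint path of length at least $t$. Since $G$ is 3-connected, one can reroute through vertices off the subdivision, exploiting the chord or back-edge together with the extra paths guaranteed by 3-connectivity, to obtain the third long path. Hence only outcome (iii) is possible.

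For case (iii), the underlying simple wheel $W_{k'}$ belongs to $\mathcal P^3_t$: its outer cycle is the rim of length $k'\ge 3t$, every non-rim edge is a single-weight spoke, and any $C$-path runs through the hub and thus has length two. It remains to bound each summand so that $G_i\in\mathcal L^3_{f_{\ref{thm:ttt}}(t)}$. If $\ell(G_i)\ge f_{\ref{thm:ttt}}(t)$, let $T_i$ be the summing triangle and pick an edge $e$ on a long path of $G_i$ with at most one endpoint in $T_i$; apply Lemma~\ref{lem:tri-ext} to $(G_i,T_i,e)$. If the $A_1$ or $A_2$ minor outcome occurs, this nonplanar configuration anchored at $T_i$ combines with the long rim of $W_{k'}$ to produce three internally disjoint long paths in $G$, yielding $\theta_{t,t,t}$ and a contradiction. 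Otherwise $G_i=S_3(G_{i,0};G_{i,1},\dots,G_{i,k_i})$ with $G_{i,0}$ planar containing $T_i$ as a facial cycle, and I absorb $G_{i,0}$ into the base by drawing its interior inside the face that was $T_i$. This preserves both the planarity and the outer cycle of the base, and creates new inner facial triangles as attachment sites for the $G_{i,j}$.

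Iterating this refinement on every summand whose longest path still exceeds $f_{\ref{thm:ttt}}(t)$, the process terminates because each absorption strictly decreases a well-founded measure such as the total edge count of summands violating the $\mathcal L^3_{f_{\ref{thm:ttt}}(t)}$ condition. The final planar base retains the original rim of length $k'\ge 3t$ as its outer cycle and all $C$-paths remain short (either of length two through the hub or bounded inside a small summand), hence lies in $\mathcal P^3_t$, while every remaining summand lies in $\mathcal L^3_{f_{\ref{thm:ttt}}(t)}$; thus $G\in\Phi^3(\mathcal L^3_{f_{\ref{thm:ttt}}(t)},\mathcal P^3_t)$. The main obstacle is producing $\theta_{t,t,t}$ in the two contradiction subcases above, since each requires locating a \emph{third} internally disjoint long path in a carefully chosen region of $G$ beyond the two that the current structure supplies for free, and making the large parameters $t'$ and $k'$ work quantitatively.
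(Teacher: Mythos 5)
Your opening move matches the paper's: apply Lemma~\ref{lem:ttt} and treat the wheel decomposition $S_3(W_{k'}';G_1,\dots,G_{k'})$ as the surviving case. But there are two genuine gaps. First, in outcomes (i) and (ii) you propose to manufacture a third long internally disjoint path by ``rerouting through vertices off the subdivision'' using $3$-connectivity. That is not how these cases are killed, and $3$-connectivity alone does not supply a long third path. The point of the graphs $W_k^+$ and $L_k^+$ (as opposed to $W_k$ and $L_k$) is that the extra edge already makes them contain $\theta_{t,t,t}$ outright for $k=2t$: e.g.\ in $W_{2t}^+$, take the hub and one subdivision vertex $u$; the two half-rims reach the hub by spokes near the ``equator,'' and the edge $uv$ followed by the opposite half of the rim gives the third long path. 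So the paper simply invokes Lemma~\ref{lem:ttt} with parameter $2t$ and these cases vanish; no rerouting argument is needed, and the one you sketch would not go through as stated.

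Second, and more seriously, your treatment of a summand $G_i$ with $\ell(G_i)\ge f_{\ref{thm:ttt}}(t)$ does not work. Lemma~\ref{lem:tri-ext} is a planarity dichotomy relative to the triangle $T_i$; neither of its outcomes carries any length information. An $A_1$ or $A_2$ minor anchored at $T_i$ is a bounded-size configuration and does not ``combine with the long rim'' to give three internally disjoint \emph{long} paths between one pair of vertices: the two rim arcs give two long paths, but the third must be extracted from inside $G_i$, and nothing in your argument produces it. Likewise, after absorbing a planar $G_{i,0}$ into the base, that piece may itself contain a long path disjoint from the outer cycle, so your claim that all $C$-paths of the final base remain short is unjustified as stated. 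The paper's proof of this step is its technical core: it fixes a planar base $G_0$ with $|G_0|$ maximum, analyzes where the summing triangle $y_0y_1y_2$ meets the rim of the wheel subdivision, applies Lemma~\ref{lem:ttt} \emph{recursively} inside $H_1$ to find a second wheel structure, and then uses the circlet machinery of Theorem~\ref{lem:Scycleedge} to conclude that either $M_0\backslash y_1y_2$ is planar with the right facial cycle (contradicting the maximality of $G_0$) or there are crossing paths yielding a ``long spoke'' and hence a $\theta_{t,t,t}$. This entire mechanism is absent from your proposal, so the claim that every summand lies in $\mathcal L^3_{f_{\ref{thm:ttt}}(t)}$ is not established.
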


\begin{proof} We show $f_{\ref{thm:ttt}}(t)=f_{\ref{lem:ttt}}(2t,3t)$ satisfies the theorem. For simplicity, let $s(t)=f_{\ref{thm:ttt}}(t)$. Suppose $G$ is a $3$-connected $\theta_{t,t,t}$-free graph that does not belong to $\mathcal{L}_{s(t)}^3$. We will show that $G \in \Phi(\mathcal L_{s(t)}^3,\mathcal P_t^3)$. Since both $W_{2t}^+$ and $L_{2t}^+$ contain $\theta_{t,t,t}$, by Lemma~\ref{lem:ttt}, $G$ can be expressed as $S_3(W_{3t}';G_1,\dots,G_{3t})$, where $|G_i|\ge5$ for all $i$. It follows that $G$ can be expressed as $G=S_3^p(G_0; H_1,...,H_h)$, where $G_0, H_1,..., H_h$ are 3-connected minors of $G$, $|H_i|\ge5$ for all $i$, $G_0$ is planar, and $G_0$ has a subgraph $H_0$ such that $H_0$ is a subdivision of $W_k$ with $k \geq 3t$ and the rim cycle of $H_0$ is a facial cycle of $G_0$. Choose $G_0$ so that $|G_0|$ is as big as possible. Let $x_0,x_1,\dots,x_k$ be the non-subdividing vertices of $H_0$ with $x_0$ corresponding to the center. By Lemma  \ref{lem:removeC}, $G_0\in \mathcal P_t^3$. So we only need to show $H_i \in \mathcal{L}_{s(t)}^3$ for all $i$.

To simplify notation, assume $i=1$. We suppose $H_1$ has a path of length $s(t)$ and derive a contradiction. Let $y_0y_1y_2$ be the common triangle of $G_0$ and $H_1$. Note $y_0y_1y_2$ is a face of $G_0$ so it is contained in some face of $H_0$. Let $C$ be the cycle bounding the region containing $y_0y_1y_2$ where $C$ corresponds to triangle $x_0x_1x_2$ of $H_0$. Since $G_0$ is $3$-connected, there are three disjoint paths in $G_0$ (in fact, inside $C$) from $x_0x_1x_2$ to $y_0y_1y_2$. By renaming the indices of $y_0y_1y_2$, if necessary, we assume that the paths are from $x_i$ to $y_i$ ($i=0,1,2$). Note the $x_0y_0$-path is disjoint from the rim of $H_0$. 

Suppose at least one of $y_1,y_2$, say $y_2$, is not on the rim of $H_0$. Since $H_1$ is $3$-connected, $H_1-y_2$ is $2$-connected. Since $H_1$ has a path of length $s(t)$ (and $s(t) =f_{\ref{lem:ttt}}(2t,3t) > 8t^2$), $H_1-y_2$ has a path of length $4t^2$ and hence by Lemma~\ref{lem:lemmaa}, a $y_0y_1$-path $P$ of length at least $t$. Now we have a contradiction since $G_0 \cup P$ contains $\theta_{t,t,t}$ at $x_0$ and $x_1$: one path uses $P$ as well as the $x_0y_0$-path and $x_1y_1$-path, and the other two paths are in $H_0$. It is important to note edges of triangle $y_0y_1y_2$ are not used in this $\theta_{t,t,t}$ since these three edges are deleted when $H_1$ is 3-summed to $G_0$. 

From the last paragraph we conclude that both $y_1,y_2$ are on the rim of $H_0$. Since $G_0$ is 3-connected, $y_1$ and $y_2$ must be adjacent in $H_0$. We assume that $y_1y_2$ is an edge of $H_0$ and, moreover, $G_0$ has no other edges parallel to $y_1y_2$ since all such edges can be placed in $H_1$. In the following we will look, in $H_1$, for a path from $y_1$ to $y_2$ together with a path $P$ of length at least $t$ from this path to $y_0$; call $P$ a \textit{long spoke}. With these two paths, there is a $\theta_{t,t,t}$ in $G$ at $x_0$ and $v$ as shown in Figure \ref{fig:G_0}.

\begin{figure}[ht]
\centerline{\includegraphics[scale=0.6]{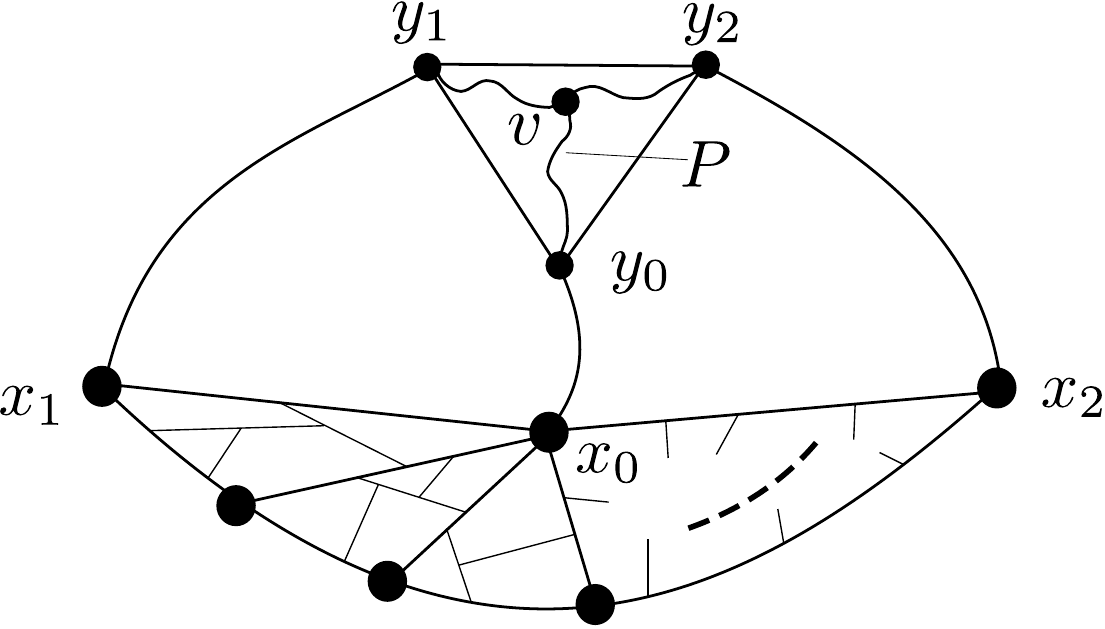}}
\caption{a long spoke in $G$ \label{fig:G_0}}
\end{figure} 

Since $H_1$ is $\theta_{t,t,t}$-free with $\ell(H_1)\ge s(t)$, by Lemma~\ref{lem:ttt}, $H_1=S_3(J_0;J_1,\dots,J_{3t})$ where $J_0=W_{3t}'$ and $|J_i|\ge5$ for all $i>0$. Let $z_0$ be the center of $J_0$ and $z_1z_2....z_{3t}z_1$ be its rim cycle. Without loss of generality, assume $y_0y_1y_2$ is contained in $J_1$ and $z_0z_1z_2$ is the common triangle of $J_0$ and $J_1$. 

Since $J_1$ is $3$-connected, there are three disjoint paths $P_i$ ($i=0,1,2$) from $z_i$ to the triangle $y_0y_1y_2$. Suppose the other end of $P_0$ is not $y_0$. Then $H_1$ contains three independent paths $Q_0, Q_1, Q_2$ from $z_0$ to $y_0,y_1,y_2$, respectively, as shown in the left in Figure~\ref{fig:H_1}. Since $Q_0$ has length at least $t$, it is a long spoke and $G$ contains a $\theta_{t,t,t}$. Hence assume $P_i$ is from $z_i$ to $y_i$ ($i=0,1,2$) as on the right in Figure~\ref{fig:H_1}. 

\begin{figure}[ht]
\centerline{\includegraphics[scale=0.6]{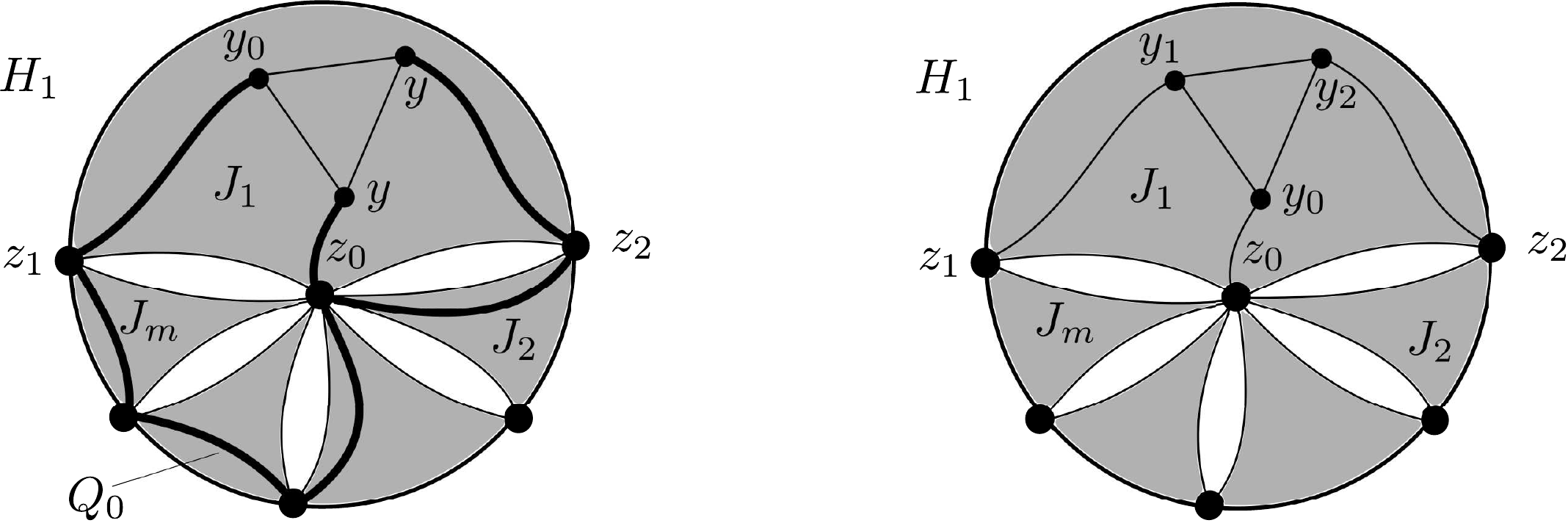}}
\caption{decomposition of $H_1$ into pieces \label{fig:H_1}}
\end{figure} 

Let $H_1'$ be the 3-sum of $J_0$ and $J_1$. In other words, $H_1'$ is obtained from $H_1$ by reducing each $J_i$ ($i>1$) to a triangle. Then $H_1'$ is 3-connected. Let $\Omega$ be the circlet of $H_1'$ with vertices $z_1,y_1,y_0,y_2, z_2, z_3, ...,z_{3t}$, which are cyclically ordered as they are listed, and with $3t+1$ edges from the two paths $y_1y_0y_2$ and $z_2z_3...z_{3t}z_1$. Note $\Omega$ is well-defined even if $y_1=z_1$ or $y_2=z_2$. From Lemma~\ref{lem:3sum}, we know that $H_1'$ has 3-connected minors $M_0, M_1,\dots, M_a$ such that $|M_i|\ge5$ for all $i>0$, $V(\Omega) \subseteq V(M_0)$, $(M_0,\Omega)$ is $4$-connected, and $H_1'=S_3(M_0;M_1,\dots,M_a)$. Since $z_0$ has more than three neighbors in $\Omega$, $z_0$ must belong to $M_0$. It follows that $H_1=S_3(M_0;M_1,...,M_b)$ where $M_{a+1},...,M_b$ are $J_2,....,J_{3t}$, respectively. Note $H_1\backslash y_1y_2$ has an $\Omega$-cycle $z_1P_1y_1y_0y_2P_2z_2...z_{3t}z_1$, hence $M_0 \backslash y_1y_2$ also has an $\Omega$-cycle. 

If $M_0\backslash y_1y_2$ admits a planar drawing so that some facial cycle $F$ is an $\Omega$-cycle, let $G_0'$ be the 3-sum of $G_0$ and $M_0$. Then $G_0'$ is planar. Let $H_0'$ be obtained from $H_0$ by replacing edge $y_1y_2$ with path $F\backslash y_1y_2$. Then $H_0'$ is a subdivision of $W_k$ and the rim cycle of $H_0'$ is a facial cycle of $G_0'$. Moreover, $G=S_3^p(G_0'; H_2,...,H_h,M_1,...,M_b)$, which contradicts the maximality of $G_0$.

From Lemma~\ref{lem:Scycleedge}, $M_0\backslash y_1y_2$ has an $\Omega$-cycle $F$ and two crossing paths $Q_1,Q_2$ on $F$ with ends $q_1,q_3$ and $q_2,q_4$, respectively, such that among the four paths of $F$ divided by $q_1,q_2,q_3,q_4$, at least three of them contain edges of $\Omega$. For $i=1,2,3,4$, let $F_i=F[q_i, q_{i+1}]$, where $q_5=q_1$. We consider two cases. Suppose path $y_1y_0y_2$ is contained in some $F_i$, say $i=1$. Then one of $q_3,q_4$, say $q_3$, belongs to $\{z_3,z_4,...,z_{3t}\}$. It follows that $Q_1$ contains $z_0$  and thus $F_2\cup F_3$ contains the path $z_2z_3...z_{3t}z_1$. Without loss of generality, assume $q_3=z_{\lfloor3t/2\rfloor}$. Then the union of $Q_1, Q_2$, $F\backslash E(F_4)$, and $H_0$ contains $\theta_{t,t,t}$ at $q_2,q_3$, which settles this first case. Now we assume $y_0\in \{q_1,q_2,q_3,q_4\}$, and without loss of generality, $y_0=q_1$. We claim we may further assume that $F_2\cup F_3$ contains the path $z_2z_3...z_{3t}$. This is clear if $Q_2$ does not contain $z_0$. If $Q_2$ contains $z_0$ then $Q_1$ does not contain $z_0$, which implies either $F_1\cup F_2$ or $F_3\cup F_4$ contains path $z_2z_3...z_{3t}$. Let us assume the former, by symmetry. Then we can set $q_2=z_2$, which proves our claim. Therefore, either $Q_1\cup F_2$ or $Q_1\cup F_3$ is a long spoke and hence $G$ contains $\theta_{t,t,t}$. This completes our proof.
\end{proof}

Let $(G,w)$ be a weighted graph and suppose $G=S_d(G_0;G_1,\dots,G_k)$, where $d\in\{2,3\}$. Then we can define weights $w_0,w_1,\dots,w_k$. For each $i \geq 0$, if $e \in G_i$ does not belong to any summing triangle, then $w_i(e) = w(e)$. If $e \in G_i$ belongs to a summing triangle, then $w_i(e) = 1$. We say that $w_0,\dots,w_k$ are the {\it induced weights}.

\begin{proof}[Proof of Theorem \ref{thm:3need}]  
We show $f_{\ref{thm:3need}}(t)=f_{\ref{thm:ttt}}(t)$ satisfies the theorem. Let $(G,w)$ be 3-connected and $\theta_{t,t,t}$-free. Assume (b) does not hold. We first claim that there exists a 3-connected plane graph $G_0$ such that \\ 
\indent $\bullet$ if $C$ is the outer cycle of $G_0$ then either $|C|\ge 3t$ or $C$ contains at least three heavy edges, and \\ 
\indent $\bullet$ $(G,w)=S_3^p((G_0,w_0)$; $(G_1,w_1)$, $\dots$, $(G_k,w_k))$, where $G_i\in \mathcal{L}_{f_{\ref{thm:3need}}(t)}^3$ with $|G_i|\ge5$ for $i=1,...,k$. \\ 
This claim follows from Lemma \ref{thm:ttt} immediately if $G \notin \mathcal{L}_{f_{\ref{thm:3need}}(t)}^3$. So we assume $G \in \mathcal{L}_{f_{\ref{thm:3need}}(t)}^3$. Consider a cycle $Q$ containing as many heavy edges as possible. If there is a heavy edge $e$ not contained in $Q$ then $G$ has a $Q$-path $P$ containing $e$. It is easy to see that $Q\cup P$ either contains $\theta_{t,t,t}$ or contains a cycle that contains more heavy edges. Both cases are impossible, so $Q$ must contain all heavy edges. Let $\Omega$ be a circlet such that its edge set consists of all heavy edges, its vertex set consists of exactly vertices that are incident with at least one heavy edge, and such that $Q$ is an $\Omega$-cycle. Note $|E(\Omega)|\ge3$ and $|V(\Omega)|\ge4$ because (b) does not hold. By Lemma~\ref{lem:3sum}, $G$ has 3-connected minors $G_0,...,G_k$ such that $|G_i|\ge5$ for $i>0$, $V(\Omega) \subseteq V(G_0)$, $(G_0,\Omega)$ is $4$-connected, and $G=S_3(G_0;G_1,\dots,G_k)$. Note $G_0$ contains an $\Omega$-cycle since $G$ has an $\Omega$-cycle. By Theorem \ref{lem:Scycleedge}, $G_0$ admits a planar drawing with an $\Omega$-cycle $C$ as a facial cycle. Let $w_0,\dots,w_k$ be the induced weights. Then our claim holds with our choices of $(G_0,w_0)$, $(G_1,w_1)$, $\dots$,$(G_k,w_k)$, and $C$.

Let us choose $G_0$ satisfying the above claim with as many vertices as possible. If $G_i$ is 3-summed to $G_0$ over triangle $T$, then we assume no edge of $G_i$ is parallel to any edge of $T$ since we may put all these edges in $G_0$. We also assume each edge $e$ of $C$ has the maximum weight among all edges of $G_0$ that are parallel to $e$. By Lemma~\ref{lem:removeC}, $G_0$ contains no $C$-path of weight at least $2t$ and $w_0(e) < t$ for all edges $e$ of $G_0 \backslash E(C)$. Hence we conclude $(G_0,w_0) \in \mathcal P_t^3$. 

It remains to show that no $G_i$ ($i>0$) contains a heavy edge. Suppose to the contrary that some $G_i$ contains a heavy edge $e$. Let $T$ be the summing triangle of $G_i$. Then at most one end of $e$ is in $T$. By the maximality of $G_0$ and Lemma \ref{lem:tri-ext}, $G_i$ contains a minor $A\in\{A_1,A_2\}$. Note at least one vertex of $T$, say $v$, is not on $C$. Thus the 3-sum of $(G_0,w_0)$ and $(G_i,w_i)$ contains a minor $(G_0',w_0')$ obtained as follows: first we reduce $G_i\backslash E(T)$ to $A\backslash E(T)$, then we reduce $A\backslash E(T)$ to a triangle (by contracting two edges and deleting one or two edges) with vertex set $V(T)$ and such that $e$ is on the triangle and is incident with $v$. Then by applying Lemma \ref{lem:removeC} to $(G_0',w_0')$ we obtain a $\theta_{t,t,t}$. This contradiction completes our proof of the theorem. 
\end{proof}

\section{Proving the main theorem}

In this section we prove Theorem \ref{thm:big}. 
We divide the proof into two parts.

\begin{lemma}
There exists a function $f_{\ref{lem:ttt2}}(r,s)$ such that all weighted graphs in $\Phi(\mathcal L_{r,s},\mathcal P_r)$ are $\theta_{t,t,t}$-free, where $t=f_{\ref{lem:ttt2}}(r,s)$.
\label{lem:ttt2}
\end{lemma}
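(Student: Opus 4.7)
The plan is to prove the lemma by induction on the number $m$ of summed pieces used to build $(G,w)\in\Phi(\mathcal{L}_{r,s},\mathcal{P}_r)$. The base case $m=0$ reduces to showing that every $(G_0,w_0)\in\mathcal{P}_r$ is $\theta_{t,t,t}$-free for some $t=f_0(r)$ depending only on $r$. For this, let $T=P_1\cup P_2\cup P_3$ be a putative theta in $G_0$ with cubic vertices $u,v$. In $G_0$'s planar embedding, the three cycles $C_{ij}=P_i\cup P_j$ bound the three faces of $T$, and exactly one of them, say $C_{12}$, is the one whose exterior region contains $G_0$'s outer face $C$; then $P_3$ lies strictly in the interior of $C_{12}$. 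A topological argument then shows that $P_3$ uses no edge of $C$ (such edges lie in the exterior of $C_{12}$, and any $E(C)$-edges on $C_{12}$ itself are already consumed by $P_1\cup P_2$) and that no internal vertex of $P_3$ lies on $C$ (any such vertex would have to lie on $C_{12}=P_1\cup P_2$, contradicting internal disjointness of the three paths). If $u,v\in V(C)$, then $P_3$ is a $C$-path of $G_0$, so $w(P_3)<2r$ directly from the definition of $\mathcal{P}_r$. Otherwise, the 2-connectivity of $G_0$ and Menger's theorem let me extend $P_3$ at each off-$C$ endpoint along a path internally disjoint from $P_3$, avoiding $E(C)$ and internal $C$-vertices; the result is a $C$-path of weight $<2r$ containing $P_3$, so $w(P_3)<2r$ in all cases and we may take $f_0(r)=2r$.

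For the inductive step, pick an outermost summed piece $(G_m,w_m)\in\mathcal{L}_{r,s}$ attached to the base by a $k$-sum $(k\in\{2,3,4\})$ over a summing vertex set $S_m$ of size $k$. Since $G_m\in\mathcal{L}_s$ has no path of length $s$ and every edge has weight $<r$, every path in $G_m$ has weight $<rs$. I would then apply an analog of Lemma~\ref{lem:lemmab} for $k$-sums: $(G,w)$ contains $\theta_{t,t,t}$ if and only if either (a) the piece $(G_m^+,w_m^+)$ contains it, where $G_m^+$ is $G_m$ augmented with virtual edges on $S_m$ whose weights equal the maximum-weight $S_m$-paths through $G\setminus(G_m\setminus S_m)$, or (b) the complement $(G',w')$ contains it, obtained from $G$ by replacing $G_m\setminus S_m$ by virtual edges on $S_m$ of weight $<rs$. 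In case (a), $G_m$ has bounded longest path, so any theta in $G_m^+$ uses a bounded number of edges and hence has bounded weight once the ``max path in the rest'' is controlled (which is itself done by an outer induction on the construction). In case (b), $(G',w')$ has $m-1$ summed pieces and lies in $\Phi(\mathcal{L}_{r,s},\mathcal{P}_{\tilde r})$ for some $\tilde r$ still bounded by a function of $r,s$ after the virtual edges are absorbed into the base, so the outer induction closes.

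The principal obstacle is the 4-sum case: the four summing vertices of a rectangle all lie on $C$, so virtual edges between non-adjacent summing pairs would cross in the planar embedding of the base and case (b) cannot preserve planarity naively. I would handle this in the same spirit as the proof of Corollary~\ref{cor:RobCha}: whenever a 4-sum is used on a rectangle, the rectangle structure itself (together with a suitable 2-separation of the summed piece) allows the 4-sum to be rewritten as two 2-sums over the two $C$-edges of the rectangle, at the cost of making the base graph slightly smaller; this reduces the entire induction to the 2- and 3-sum cases, in which planarity and the $\mathcal{P}_r$-type weight bounds are preserved. With these reductions, taking $t=f_{\ref{lem:ttt2}}(r,s)$ to grow polynomially in $r$ and $s$ (for instance $(rs)^{O(1)}$) suffices to close the induction.
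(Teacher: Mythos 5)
Your base case is essentially sound and matches the paper's treatment of $\mathcal P_r$ in spirit: identify the branch $P_3$ of the theta that lies in the face of $P_1\cup P_2\cup P_3$ away from the outer cycle $C$, and extend it to a $C$-path of weight $<2r$. One caution: Menger applied to $\{u,v\}$ and $V(C)$ gives two disjoint paths that may pass through \emph{internal} vertices of $P_3$, so "internally disjoint from $P_3$'' does not come for free. The correct routing (which is what the paper does in its own setting) is to take two disjoint paths from $C$ to the cycle $P_1\cup P_2$ and then walk along arcs of $P_1\cup P_2$ to reach $u$ and $v$; a case check on the cyclic positions of the two attachment points relative to $u,v$ shows the arcs can be chosen disjoint.

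The inductive step, however, has a genuine gap: the "analog of Lemma~\ref{lem:lemmab} for $k$-sums'' that your case (a)/(b) dichotomy rests on is false for $k=3$. A theta of $G$ can have one cubic vertex $x$ in $G_m\setminus S_m$ and the other cubic vertex $y$ on the far side of the $3$-cut, with each of its three branches crossing $S_m$ at a distinct vertex. Such a theta lives in neither $(G_m^+,w_m^+)$ (three paths converging at $y$ cannot be encoded by virtual edges between pairs of $S_m$-vertices) nor $(G',w')$ (realizing two virtual edges of the triangle in different branches would require two nearly-maximum-weight paths in $G_m$ that are disjoint except at one vertex, which is not guaranteed); this is exactly why the paper never inducts on pieces but instead takes an \emph{arbitrary} triple $P_1,P_2,P_3$ in $G$, contracts its intersections with the summed pieces into single weighted edges (and replaces a cubic vertex inside a piece by a new degree-$3$ vertex attached to the summing triangle), and only then runs the planarity argument on the modified base graph. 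Two further problems: your case (a) bound fails because $\mathcal P_r$ only limits the weight of $C$-paths and of edges off $C$ --- edges \emph{on} the outer cycle may be arbitrarily heavy, so the "maximum-weight $S_m$-path through the rest'' is unbounded; and your reduction of every $4$-sum to two $2$-sums requires a $2$-separation that need not exist (the paper must treat separately the degenerate case where $C$ is a $4$-cycle and the rectangle piece is the only summand, observing there that $G$ has at most two heavy edges).
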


\begin{proof} We show $f_{\ref{lem:ttt2}}(r,s)=2qr$ satisfies the theorem, where $q=\max\{r,s\}-1$. Suppose there is a counterexample $(G,w)$. Then we choose one with $|G|$ minimum. Assume $(G,w)$ is formed by $k$-summing ($k=2,3,4$) weighted graphs $(G_1,w_1),...,(G_n,w_n)\in \mathcal L_{r,s}$ to $(G_0,w_0)\in \mathcal P_r$. Let $C$ be the outer cycle of $G_0$.

Suppose some $(G_{i_0},w_{i_0})$ is 4-summed to a rectangle $x_1x_2x_3x_4x_1$ of $G_0$, where $x_1x_2$ and $x_3x_4$ are edges of $C$. Recall that by the definition of a rectangle, this means no graph $(G_{i_1}, w_{i_1})$ can be 2-summed to an edge between $x_1$ and $x_2$ or $x_3$ and $x_4$ since there are no parallel edges between these vertices. We consider two cases. Assume first that $G$ has a 2-separation $(H,J)$ with $V(H\cap J) =\{x_j,x_{5-j}\}$ for $j=1$ or 2 and such that $C[x_j,x_{5-j}]\subseteq H$ and $C[x_{5-j},x_j]\subseteq J$. Define $(H^+,w_H)$ where $H^+$ is obtained by adding a new edge $e_H=x_jx_{5-j}$ to $H$, $w_H(e_H)$ is equal to the maximum weight of an $x_jx_{5-j}$-path in $J$, and $w_H(e)=w(e)$ for all edges $e$ of $H$. Also define $(J^+,w_J)$ analogously. Then $G_0$ can be expressed as a 2-sum of plane graphs $G_0^H$ and $G_0^J$ over $e_H$ and $e_J$ such that the outer cycles of $G_0^H$ and $G_0^J$ are $C[x_j,x_{5-j}]+e_H$ and $C[x_{5-j},x_j]+e_J$, respectively. Moreover, $(G_1,w_1),...,(G_n,w_n)$ can be divided into two groups such that the first group is summed to $G_0^H$ to obtain $(H^+,w_H)$ and the second group is summed to $G_0^J$ to obtain $(J^+,w_J)$. It follows that both $(H^+,w_H)$ and $(J^+,w_J)$ belong to $\Phi(\mathcal L_{r,s},\mathcal P_r)$. By the minimality of $G$, both $(H^+,w_H)$ and $(J^+,w_J)$ are $\theta_{t,t,t}$-free and thus, by Lemma \ref{lem:lemmab}, $(G,w)$ is also $\theta_{t,t,t}$-free. This is a contradiction and so the first case is settled.

Now in the second case, $G$ does not have a 2-separation as described in the previous paragraph. Then the length of $C$ must be 4 and $G_{i_0}$ must be the only graph summed to $G_0$ (so $n=1$). Therefore, $G_0$ consists of the 4-cycle $x_1x_2x_3x_4x_1$ and possibly more edges parallel to $x_1x_4$ or $x_2x_3$. Consequently, $G$ is obtained from $G_1\backslash \{x_1x_2,x_3x_4\}$ by adding parallel edges. Since all heavy edges of $G$ belong to $C$ and $x_1x_2$ and $x_3x_4$ are deleted after the sum, we deduce $G$ has at most two heavy edges. As a result, in every $\theta_{a,b,c}$ of $(G,w)$, at least one of its three independent paths cannot have any heavy edges. Let $t^*$ be the largest integer so that $(G,w)$ contains $\theta_{t^*,t^*,t^*}$. Then $t^*\le (r-1)(s-1)< f_{\ref{lem:ttt2}}(r,s)$. 

Now we assume that no $G_i$ is 4-summed to $G_0$. Suppose $x,y$ are distinct vertices of $G$ and $P_1,P_2,P_3$ are independent $xy$-paths of $G$. Let $p=\min\{w(P_1), w(P_2), w(P_3)\}$. We prove $p< 2qr$. 
If $P_j\subseteq G_i$ for some $j$ and $i>0$ then $p \le w_i(P_j) \le \max\{w_i(P):P$ is a path of $G_i\} \le (r-1)(s-1)< 2qr$. Henceforth we assume no $G_i$ contains any $P_j$. In particular, each $G_i-V(G_0)$ contains at most one of $x,y$. 

We modify $(G_0,w_0)$ and $P_1,P_2,P_3$ as follows. Let $P=P_1\cup P_2\cup P_3$. For each $i$ such that $G_i-V(G_0)$ contains neither $x$ nor $y$, note $G_i\cap P$ consists of zero, one, or two $G_i\cap G_0$-paths. If $Z$ is such a path with ends $z_1,z_2$, we change $w_0(z_1z_2)$ to $w_i(Z)$ and, in $P$, we replace path $Z$ by a single edge $z_1z_2$. If $G_i-V(G_0)$ contains $x$ or $y$, say $x$, then $V(G_i\cap G_0)$ consists of three vertices $z_1,z_2,z_3$, and we add a new vertex $x'$ and three new edges $x'z_1,x'z_2,x'z_3$ to $G_0$. In this case we define the weight of $x'z_j$ ($j=1,2,3$) to be $w_i(Z_j)$, where $Z_j$ is the $xz_j$-path contained in $G_i\cap P$. 
We also change $w_0(z_jz_{j'})$ to $w_i(Z_j)+w_i(Z_{j'})$. 
Let $(G_0',w_0')$ be the modified weighted graph. Let $P_1',P_2',P_3'$ be the three modified paths and $x',y'$ be their ends. Note $w_0'(P_j')=w(P_j)$ for $j=1,2,3$.

Note $G_0'$ is planar and let $C'$ be its outer cycle. We may assume that $P_2'$ is inside the region bounded by cycle $P_1'\cup P_3'$ and $C'$ is outside this region. Let $Q_1,Q_2$ be two disjoint paths from $C'$ to $P_1'\cup P_3'$. Then $P_1'\cup P_2'\cup P_3'\cup Q_1\cup Q_2$ contains a $C'$-path $Q'$ such that $P_2'\subseteq Q'$. Since the only possible vertices in $V(G_0') \backslash V(G_0)$ are $x',y'$ and each of them is surrounded by a triangle of $G_0$, we deduce $G_0$ has a $C'$-path $Q$ with $w_0'(Q)= w_0'(Q')$. Therefore, $p\le w_0'(P_2')\le w_0'(Q') = w_0'(Q) \le q||Q|| < 2qr$. \end{proof}

\begin{theorem}
There exists a function $s(t)$ such that every $2$-connected $\theta_{t,t,t}$-free weighted graph belongs to $\Phi(\mathcal L_{t,s(t)}, \mathcal P_t)$.
\label{thm:2connw}
\end{theorem}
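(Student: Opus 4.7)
My proof plan is strong induction on $|G|$, reducing the general $2$-connected case to the $3$-connected case handled by Theorem \ref{thm:3need}. Small cases ($|G|\le 3$) are cycles, which already lie in $\mathcal P_t$ and so in $\Phi(\mathcal L_{t,s(t)},\mathcal P_t)$ with no summands. For the inductive step I split on whether $G$ is $3$-connected, dealing with the $3$-connected case directly via Theorem \ref{thm:3need} and the general case by peeling off $2$-sum attachments using Lemma \ref{lem:2sep}.

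If $G$ is $3$-connected, apply Theorem \ref{thm:3need}. In case (a), $(G,w)\in\Phi^3(\mathcal L^3_{t,f_{\ref{thm:3need}}(t)},\mathcal P^3_t)$ embeds into $\Phi(\mathcal L_{t,s(t)},\mathcal P_t)$ for any $s(t)\ge f_{\ref{thm:3need}}(t)$, since $\mathcal L^3\subseteq\mathcal L$, $\mathcal P^3\subseteq\mathcal P$, and $3$-sums to inner facial triangles are among the allowed operations defining $\Phi$. In case (b), either $G$ is a triangle of three heavy edges, in which case $(G_0,w_0)=(G,w)\in\mathcal P_t$ serves as the base with no summands, or $G\in\mathcal L^3_{f(t)}$ has at most two heavy edges; in the latter case I build a small base $(G_0,w_0)\in\mathcal P_t$ whose outer cycle is a short cycle (a digon or a $4$-cycle, depending on the number and adjacency of heavy edges) carrying the heavy edges, and $2$-sum the heavy-edge-free remainder of $(G,w)$, which then sits in $\mathcal L_{t,s(t)}$, into $G_0$ along a suitable edge.

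If $G$ is not $3$-connected, apply Lemma \ref{lem:2sep} to $(G,w)$ with parameter $t$. The principal outcome is case (c): a decomposition $G=S_2(G_0;G_1,\dots,G_k)$ with $si(G_0)=K_3$ or $G_0$ $3$-connected, and each $G_i\setminus e_i$ carrying no $x_iy_i$-path of weight $\ge t$. The core $G_0$ is a $\theta_{t,t,t}$-free minor of $G$, so by the $3$-connected analysis above it has a $\Phi$-decomposition with base $H_0\in\mathcal P_t$. A separate sublemma, proved by applying the induction hypothesis to each $G_i$ and then invoking Lemmas \ref{lem:lemmaa}, \ref{lem:Sdecompbd}, and \ref{lem:lemmab}, will show that the conjunction of the lightness of $G_i\setminus e_i$ with the $\theta_{t,t,t}$-freeness of $G$ forces $(G_i,w_i)\in\mathcal L_{t,s(t)}$: any heavy edge or long path inside $G_i\setminus e_i$ could be combined with a heavy $x_iy_i$-path on the other side of the $2$-separation to produce a forbidden $\theta_{t,t,t}$. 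This collapses every $G_i$ into a single $\mathcal L_{t,s(t)}$-summand on $H_0$, yielding $(G,w)\in\Phi(\mathcal L_{t,s(t)},\mathcal P_t)$.

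The main obstacle is case (b) of Lemma \ref{lem:2sep}: a $2$-separation $(H,J)$ with both sides carrying heavy $xy$-paths. Neither side can be peeled off as a light $\mathcal L_{t,s}$-summand, and this is precisely where the $4$-sum-to-rectangle operation in the definition of $\Phi$ becomes essential. I will handle it by applying the inductive hypothesis to both $(H^+,w_H)$ and $(J^+,w_J)$, where the new edge $e_H=e_J=xy$ is weighted by the maximum opposite $xy$-path weight; Lemma \ref{lem:lemmab} guarantees both pieces remain $\theta_{t,t,t}$-free. Examining where the identified edge $xy$ sits inside each resulting planar base lets me merge them into a single planar base whose outer cycle traverses both heavy $xy$-paths, with any remaining discrepancy absorbed as a $4$-sum to a rectangle at $\{x,y\}$. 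The most delicate parts of the argument will be (i) verifying the sublemma above with a clean recursion that yields a uniform $s(t)$, and (ii) carrying out this base-merging in case (b) while respecting the rectangle and outer-cycle constraints of $\mathcal P_t$.
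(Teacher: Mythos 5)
Your proposal follows essentially the same route as the paper: a minimum-counterexample/induction on $|G|$, the trichotomy of Lemma \ref{lem:2sep}, Lemma \ref{lem:lemmab} to keep both sides of a two-heavy-path $2$-separation $\theta_{t,t,t}$-free and then merging their planar bases along the heavy virtual edges, and in case (c) reducing to the $3$-connected core via Theorem \ref{thm:3need} with small explicit bases (digon, triangle, $4$-cycle) absorbing the few heavy edges. One small correction: the $4$-sum to a rectangle is not what rescues case (b) of Lemma \ref{lem:2sep} (there the two bases are simply $2$-summed over $e_H,e_J$ and the result is checked to lie in $\mathcal P_t$); it is needed only in the subcase of Theorem \ref{thm:3need}(b) with two nonadjacent heavy edges, which you do also handle via a $4$-cycle base.
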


\begin{proof}
We prove $s(t) =4t^2(3f_{\ref{thm:3need}}(t)+2)$ satisfies the theorem. Suppose there is a counterexample $(G,w)$. Then we choose one with $|G|$ minimum. If $|G|=2$, since $(G,w)$ is $\theta_{t,t,t}$-free, $G$ must have at most two heavy edges and thus $(G,w) \in \mathcal P_t\subseteq \Phi(\mathcal L_{t,s(t)}, \mathcal P_t)$. This contradicts the choice of $(G,w)$, so we assume $|G|\ge3$. We consider three cases based on Lemma~\ref{lem:2sep}.

\noindent \underline{Case (a) holds}: Let $(H,J)$ be a $2$-separation of $G$ with $V(H\cap J)=\{x,y\}$ such that neither $H$ nor $J$ has an $xy$-path of weight at least $t$. It is clear that $G$ has no heavy edges and, by Lemma~\ref{lem:lemmaa}, $G$ has no path of length at least $4t^2$. Hence $(G,w)\in\mathcal L_{t,s(t)}$. Since $G$ can be considered as a 2-sum of $G$ with a 2-cycle, and any weighted 2-cycle belongs to $\mathcal P_t$, it follows that $(G,w)\in \Phi(\mathcal L_{t,s(t)}, \mathcal P_t)$ as required.

\noindent \underline{Case (b) holds}: Let $(H,J)$ be a $2$-separation of $G$ with $V(H\cap J)=\{x,y\}$ such that $H$ and $J$ each have an $xy$-path of weight at least $t$. Denote by $(H^+,w_H)$ the graph formed from $H$ by adding an edge $e_H=xy$ with $w_H(e_H)$ equal to the weight of a heaviest $xy$-path in $J$ and $w_H(e)=w(e)$ for all other edges $e$. Define $(J^+,w_J)$ analogously. Now since $(G,w)$ is a minimal counterexample and, by Lemma~\ref{lem:lemmab}, both $(H^+,w_H)$ and $(J^+,w_J)$ are $\theta_{t,t,t}$-free, they both belong to $\Phi(\mathcal L_{t,s(t)}, \mathcal P_t)$. 

Let $(H_0,\alpha_0)\in \mathcal P_t$ be the base graph for constructing $(H^+, w_H)$ and let $C_H$ be the outer cycle of $H_0$. Let $(J_0,\beta_0)$ and $C_J$ be defined analogously. Since $e_H$ and $e_J$ are both heavy, $e_H \in C_H$ and $e_J \in C_J$. Let $(G_0,w_0)$ be the 2-sum of $(H_0,\alpha_0)$ and $(J_0,\beta_0)$ over $e_H$ and $e_J$, and let $C$ be the 2-sum of $C_H$ and $C_J$ over $e_H$ and $e_J$. Then $G_0$ is a plane graph with outer cycle $C$. In fact, $(G_0,w_0) \in \mathcal P_t$ because every $C$-path of $G_0$ is a $C_H$-path of $H_0$ or a $C_J$-path of $J_0$, and every heavy edge of $G_0$ is a heavy edge of $H_0$ or $J_0$. 

Let $\cal G$ be the set of weighted graphs that are summed to $(H_0,\alpha_0)$ or $(J_0,\beta_0)$ in forming $(H^+, w_H)$ and $(J^+, w_J)$. We claim that $(G,w)$ is formed by summing members of $\cal G$ to $(G_0,w_0)$. Since $e_H\in H^+$, $e_H$ is not contained in any summing 3- or 4-cycle of $H_0$. Moreover, every inner facial cycle of $H_0$ that does not contain $e_H$ remains an inner facial cycle of $G_0$. So summing edges and summing cycles of $H_0$ can still serve as a summing edge or cycle of $G_0$. Similarly, summing edges and summing cycles of $J_0$ can still serve as a summing edge or cycle of $G_0$. Therefore, the claim follows and thus $(G,w)\in \Phi(\mathcal L_{t,s(t)}, \mathcal P_t)$.

\noindent \underline{Case (c) holds}: Let $G=S_2(G_0;G_1,...,G_k)$ where $G_0,...,G_k$ satisfy Lemma \ref{lem:2sep}(c). Let $w_0,...,w_k$ be the induced weights. By Lemma~\ref{lem:lemmaa}, $(G_i,w_i)\in\mathcal L_{t,4t^2}$ for $i=1,...,k$. Moreover, heavy edges of $(G,w)$ are exactly heavy edges of $(G_0,w_0)$. First suppose $si(G_0)=K_3$. If no two heavy edges of $G_0$ are parallel, then $(G_0,w_0)\in\mathcal P_t$ and thus $(G,w)\in \Phi(\mathcal L_{t,4t^2}, \mathcal P_t)$. Assume $G_0$ has two parallel heavy edges $e,f$. Then they are the only two heavy edges since $(G,w)$ is $\theta_{t,t,t}$-free. Define $(G_0',w_0')$ where $G_0'$ consists of $e,f$ and a new edge $g$ parallel to $e,f$, and $w_0'(e)=w(e)$, $w_0'(f)=w(f)$, $w_0'(g)=1$. Let $(G_1',w_1')$ be obtained from $(G,w)$ by deleting $e,f$ and adding $g$ with weight 1. Then $(G,w)$ is the 2-sum of $(G_0',w_0')$ and $(G_1',w_1')$ over $g$. It is clear that $(G_0',w_0')\in\mathcal P_t$ and, by Lemma \ref{lem:Sdecompbd}, $(G_1', w_1')\in \mathcal L_{t,16t^2}$. Again we have $(G,w)\in \Phi(\mathcal L_{t,s(t)}, \mathcal P_t)$.

Second suppose $G_0$ is 3-connected. By Lemma \ref{lem:lemmab}, $(G_0,w_0)$ is $\theta_{t,t,t}$-free. We claim that $(G_0,w_0)\in \Phi(\mathcal L_{t,3f_{\ref{thm:3need}}(t)}, \mathcal P_t)$. By Theorem~\ref{thm:3need}, we assume $G_0 \in \mathcal L_{f_{\ref{thm:3need}}(t)}$ and either $(G_0,w_0)$ has at most two heavy edges or $(G_0,w_0)$ has exactly three heavy edges and these three form a triangle. Our claim is clear if $(G_0,w_0)$ has zero, one, two parallel, or three heavy edges: take the base graph to be a facial cycle of $G_0$ containing all of the heavy edges with an additional parallel edge added to each edge of the cycle. 
Suppose $(G_0,w_0)$ has two adjacent heavy edges $e=xy$ and $f=xz$ with $y\ne z$. Define $(G_0',w_0')$ where $G_0'$ is obtained from $e,f$ by adding three new edges $xy, yz, xz$, and $w_0'(e)=w_0(e)$, $w_0'(f)=w_0(f)$, $w_0'(xy)=w_0'(yz)=w_0'(xz)=1$. Let $(G_1',w_1')$ be obtained from $(G_0,w_0)$ by deleting $e,f$ and adding $xy,yz,xz$ of weight 1. Then $(G_0,w_0)$ is a 3-sum of $(G_0',w_0')$ and $(G_1',w_1')$. Moreover, $(G_0',w_0')\in\mathcal P_t$ and $(G_1',w_1')\in \mathcal L_{t,2f_{\ref{thm:3need}}(t)}$ (as $G_1'\backslash xz \cong G_0$), and thus our claim holds in this case. 
Finally, suppose  $(G_0,w_0)$ has two nonadjacent heavy edges $e=x_1x_4$ and $f=x_2x_3$. Define $(G_0',w_0')$ where $G_0'$ is obtained from $e,f$ by adding a 4-cycle $x_1x_2x_3x_4x_1$, and $w_0'(e)=w_0(e)$, $w_0'(f)=w_0(f)$, $w_0'(x_1x_2)=w_0'(x_2x_3)=w_0'(x_3x_4)=w_0'(x_4x_1)=1$. Let $(G_1',w_1')$ be obtained from $(G_0,w_0)$ by deleting $e,f$ and adding $x_1x_2,x_2x_3,x_3x_4,x_4x_1$ of weight 1. Then $(G_0,w_0)$ is a 4-sum of $(G_0',w_0')$ and $(G_1',w_1')$. Moreover, $(G_0',w_0')\in\mathcal P_t$ and $(G_1',w_1')\in \mathcal L_{t,3f_{\ref{thm:3need}}(t)}$, and thus our claim is proved. 

By this claim, $(G_0,w_0)$ is formed by 2-, 3-, and 4-summing $(H_1,\alpha_1)$, ..., $(H_n,\alpha_n)\in\mathcal L_{t,3f_{\ref{thm:3need}}(t)}$ to $(H_0,\alpha_0)\in\mathcal P_t$. Now weighted graphs $(G_1,w_1),...,(G_k,w_k)$ can be divided into groups $\mathcal H_0, ..., \mathcal H_n$ such that $(G_i,w_i)$ belongs to $\mathcal H_j$ if $G_i$ is 2-summed to $H_j$. For each $j>0$, let $(H_j^*,\alpha_j^*)$ be obtained by 2-summing all weighted graphs in $\mathcal H_j$ to $(H_j,\alpha_j)$. Then $(G,w)$ is obtained by 2-, 3-, 4-summing members of $\mathcal H_0\cup \{ (H_1^*,\alpha_1^*), ..., (H_n^*,\alpha_n^*)\}$ to $(H_0,\alpha_0)$. It remains to show $\mathcal H_0\cup\{(H_1^*,\alpha_1^*), ..., (H_n^*,\alpha_n^*)\}\subseteq \mathcal L_{t,s(t)}$. Since each $(G_i,w_i)$ has no $x_iy_i$-path of weight $\ge t$, we must have $\mathcal H_0\subseteq \mathcal L_{t,4t^2}$ by Lemma \ref{lem:lemmaa}. Moreover, by Lemma \ref{lem:Sdecompbd}, $\ell(H_j^*)\le (3f_{\ref{thm:3need}}(t)+2)(4t^2)$ and thus $\{ (H_1^*,\alpha_1^*), ..., (H_n^*,\alpha_n^*)\} \subseteq \mathcal L_{t,s(t)}$. Therefore, $(G,w)\in \Phi(\mathcal L_{t,s(t)}, \mathcal P_t)$, which completes our proof.
\end{proof}

\begin{proof}[Proof of Theorem \ref{thm:big}] The theorem is proved by Lemma \ref{lem:ttt2} and Theorem 
\ref{thm:2connw}. \end{proof}



\begin{thebibliography}{99}
\parskip 1pt

\bibitem{diestel}
R. Diestel, \textit{Graph Theory, Fourth Edition}, (2010), Springer Berlin, Germany. 

\bibitem{ding}
G. Ding, Excluding a long double path minor, \textit{Journal of Combinatorial Theory} Series B 66 (1996) 11-23.

\bibitem{DiDzWu}
G. Ding, S. Dziobiak, H. Wu, Large $W_k$- or $K_{3,t}$-minors in $3$-connected graphs, \textit{Journal of Graph Theory} 82 (2016) 207-217. 

\bibitem{ding1}
G. Ding, B. Oporowshi, J. Oxley, D. Vertigan, Unavoidable minors of large $3$-connected binary matroids, \textit{Journal of Combinatorial Theory} Series B 66 (1996) 334-360.

\bibitem{RoCh80}
N. Robertson and K. Chakravarti, Covering three edges with a bond in a nonseparable graph (Abstract), \textit{Annals of Discrete Mathematics} 8 (1980) 247.

\bibitem{RoSe03}
N. Robertson and P. D. Seymour, Graph minors. XVI. Excluding a non-planar graph, \textit{Journal of Combinatorial Theory} Series B 89 (2003) 43-76.

\bibitem{RoSe90}
N. Robertson and P. Seymour, Graph minors IX: Disjoint crossed paths, \textit{Journal of Combinatorial Theory} Series B 49 (1990) 40-77.

\bibitem{RoSe86}
N. Robertson and P. D. Seymour, Graph minors. V. excluding a planar graph, \textit{Journal of Combinatorial Theory} Series B 41 (1986) 92-114.

\bibitem{RoSe85}
N. Robertson and P. Seymour, Graph minors - a survey, \textit{Surveys in Combinatorics} 103 (1985) 153-171. 

\bibitem{RoSe83} 
N. Robertson and P. D. Seymour, Graph minors. I. Excluding a forest, \textit{Journal of Combinatorial Theory} Series B 35 (1983) 39-61.

\end{thebibliography}
\end{document}